\newtheorem{theorem}{Theorem}
\newtheorem{lemma}[theorem]{Lemma}
\newtheorem{remark}[theorem]{Remark}
\newtheorem{definition}[theorem]{Definition}
\newtheorem*{theoremA}{Theorem A}
\newtheorem*{corollaryA}{Corollary}
\newtheorem*{theoremB}{Theorem B}
\newtheorem*{theoremC}{Theorem C}
\newtheorem*{theoremD}{Theorem D}
\newtheorem*{theoremE}{Theorem E}
\newtheorem*{theoremF}{Theorem F}
\newenvironment{Proof}[1][Proof.]{\begin{trivlist}
\item[\hskip \labelsep {\bfseries #1}]}{\flushright
$\Box$\end{trivlist}}
\begin{document}

\noindent{\Large
The algebraic and geometric classification of \\ 
nilpotent weakly associative  and symmetric Leibniz algebras}
%\footnote{The authors thank Thiago Castilho de Mello for his active stimulation to write this paper.
%The work is supported by ``Fondo Puente de Investigaci\'on de Excelencia'' FPI-18-02 from Universidad de Antofagasta, CNPq    	302980/2019-9. } 

   \medskip

  \medskip
  
   {\bf  
   Mar\'ia Alejandra Alvarez %$^{a}$ 
  $\&$
    Ivan Kaygorodov%$^{b,c}$ 
    \\

    \medskip
}

{\tiny

%$^{a}$ Departamento de Matem\'aticas, Facultad de Ciencias B\'asicas, Universidad de Antofagasta, Chile

%$^{b}$ CMCC, Universidade Federal do ABC. Santo Andr\'e, Brasil

 %$^{c}$ Moscow Center for Fundamental and Applied Mathematics, Moscow,   Russia

 \medskip

\medskip

   E-mail addresses: 
   
\smallskip   
    Mar\'ia Alejandra Alvarez (maria.alvarez@uantof.cl)

\smallskip
    Ivan Kaygorodov (kaygorodov.ivan@gmail.com)

}

\medskip
 
\ 

\noindent{\bf Abstract}:
{\it This paper is devoted to the complete algebraic and geometric classification of complex $4$-dimensional nilpotent weakly associative, 
complex $4$-dimensional symmetric Leibniz  algebras,
and  complex $5$-dimensional nilpotent  symmetric Leibniz algebras.
In particular, we proved that the variety of complex $4$-dimensional symmetric Leibniz algebras has no Vergne--Grunewald--O'Halloran Property
(there is an irreducible component formed by only nilpotent algebras), but on the other hand, it has Vergne Property
(there are no rigid nilpotent algebras).
 }
 
\medskip

\medskip

\noindent {\bf Keywords}:
{\it weakly associative  algebra, Leibniz algebra, commutative algebra, 
nilpotent algebra, algebraic classification, central extension, geometric classification, degeneration.}
\medskip

\noindent {\bf MSC2020}:  17A30, 17A32, 14D06, 14L30.

 \medskip

\section*{Introduction}
The algebraic classification (up to isomorphism) of algebras of dimension $n$ from a certain variety
defined by a certain family of polynomial identities is a classic problem in the theory of non-associative algebras.
There are many results related to the algebraic classification of small-dimensional algebras in the varieties of
Jordan, Lie, Leibniz, Zinbiel and many other algebras \cite{ack,  kkl20,          degr3, usefi1,  degr1, degr2,    ha16,    kkp20,   kv16}.
 Geometric properties of a variety of algebras defined by a family of polynomial identities have been an object of study since 1970's (see, \cite{wolf2, wolf1,     chouhy,     BC99, aleis, aleis2,   gabriel,   ckls, cibils,  shaf, GRH, GRH2, ale3,     ikv17,   kppv,    kv16, S90}). 
 Gabriel described the irreducible components of the variety of $4$-dimensional unital associative algebras~\cite{gabriel}.  
 Cibils considered rigid associative algebras with $2$-step nilpotent radical \cite{cibils}.
 Burde and Steinhoff  constructed the graphs of degenerations for the varieties of    $3$-dimensional and $4$-dimensional Lie algebras~\cite{BC99}. 
 Grunewald and O'Halloran  calculated the degenerations for the variety of $5$-dimensional nilpotent Lie algebras~\cite{GRH}. 
Chouhy  proved that  in the case of finite-dimensional associative algebras,
 the $N$-Koszul property is preserved under the degeneration relation~\cite{chouhy}.
Degenerations have also been used to study a level of complexity of an algebra~\cite{wolf1,wolf2}.
 The study of degenerations of algebras is very rich and closely related to deformation theory, in the sense of Gerstenhaber \cite{ger63}.

 \newpage 

In the present paper, we give the algebraic  and geometric classification of
complex $4$-dimensional  nilpotent weakly associative  and symmetric Leibniz algebras, 
 and complex $5$-dimensional nilpotent symmetric Leibniz algebras.
An anticommutative weakly associative algebra is a Lie algebra and the variety of weakly associative algebras is a proper subvariety of the variety of flexible algebras defined by the following identity $(xy)x=x(yx).$
In dimension $2$, these two varieties coincide. Therefore, the algebraic and geometric classification of $2$-dimensional weakly associative algebras follows from \cite[Section $7.1$]{kv16}.
The variety of weakly associative algebras contains commutative algebras, associative algebras, Lie algebras and symmetric Leibniz algebras as subvarieties \cite{remm1, remm2}. 
At the same time, 
the variety of symmetric Leibniz algebras   is in intersection of right Leibniz and left Leibniz algebras \cite{pirashvili, pirashvilinew, mason, bsaid, benedikt, elisabete} 
(for superalgebra case see,  \cite{saidsuper})
and it  plays an important role in one-sided Leibniz algebras 
(more, about Leibniz and symmetric Leibniz algebras see, \cite{Feldvoss}).
So, each quadratic (i.e. endowed with a bilinear, symmetric and non-degenerate associative form) Leibniz algebra is a symmetric Leibniz algebra \cite{saidqua}.
Symmetric Leibniz algebras are related to  Lie racks \cite{racks}. 
Every symmetric Leibniz algebra is flexible, power-associative and nil-algebra with nilindex $3$ \cite{Feldvoss}.
Thanks to  \cite[Theorem 2.2]{saidsuper} each symmetric Leibniz algebra is a central extension of a suitable Lie algebra. 
It satisfies the following identities:
\begin{center} 
$x(yz)=(xy)z+y(xz)$ \ and \  $(xy)z=(xz)y+x(yz).$\end{center}
On the other hand, weakly associative algebras are related to non-associative Poisson algebras. Namely, for a weakly associative algebra the algebraic system with the same underline space endowed with the commutator multiplication and the anticommutator (Jordan) multiplication gives a non-associative Poisson algebra
\cite{remm1, remm2}.
A symmetric Leibniz algebra under  commutator and anticommutator multiplications gives a Poisson algebra \cite{said2}. 
The variety of weakly associative algebras is defined by the following identity:
\begin{center}
$(xy)z-x(yz)+(yz)x-y(zx) = (yx)z- y(xz).$
\end{center} 
The definition of  the weakly associative algebras can be re-written in the following way: {\it for each element $x$  it is true that $L_x-R_x$  gives a derivation of the algebra}, where  $L_x$ and $R_x$ are, respectively, operators of left and right multiplications of element $x.$ 
Recently, another type of non-associative algebras, called $\mathfrak{CD}$-algebras, has been introduced by a similar way: {\it for each elements $x,y$  it is true that the commutator of operators of one sided multiplication gives a derivation} (see, \cite{  ack}).

Our method for classifying nilpotent weakly associative and symmetric Leibniz algebras is based on the calculation of central extensions of nilpotent algebras of smaller dimensions from the same variety.
The algebraic study of central extensions of   algebras has been an important topic for years \cite{  klp20,hac16,  ss78}.
First, Skjelbred and Sund used central extensions of Lie algebras to obtain a classification of nilpotent Lie algebras  \cite{ss78}.
Note that the Skjelbred-Sund method of central extensions is an important tool in the classification of nilpotent algebras.
Using the same method,  
 $4$-dimensional nilpotent 
(associative \cite{degr1}, 
 terminal  \cite{kkp20}, 
 commutative  \cite{fkkv},
 right commutative \cite{akks21}) algebras,
  $5$-dimensional nilpotent 
(Jordan \cite{ha16},
 restricted Lie  \cite{usefi1}) algebras,
 $6$-dimensional nilpotent 
(Lie  \cite{degr3,degr2}, 
 binary Lie  \cite{ack},
 anticommutative  \cite{kkl20}) algebras,
 $8$-dimensional   dual Mock-Lie algebras \cite{ckls},
and some others have been described. Our main results related to the algebraic classification of cited varieties are summarized below.

\begin{theoremA}
Up to isomorphism, there are only five complex $4$-dimensional nilpotent (non-$2$-step nilpotent) symmetric Leibniz algebras, described explicitly in  section \ref{secteoA}.
Up to isomorphism, there are infinitely many isomorphism classes of  
complex  $4$-dimensional nilpotent (non-commutative, non-symmetric Leibniz) weakly associative  algebras, 
described explicitly  in  section \ref{secteoA} in terms of $2$ one-parameter families and $9$ additional isomorphism classes.
\end{theoremA}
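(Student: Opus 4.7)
The plan is to carry out the Skjelbred--Sund central extension procedure, adapted to the identities defining each variety, in the spirit of the references cited in the introduction. I would first collect the classification of complex nilpotent symmetric Leibniz and weakly associative algebras in dimensions $\le 3$: in the symmetric Leibniz case the small-dimensional lists are short because every such algebra is $3$-nil and is a central extension of a Lie algebra, while in the weakly associative case one must first enumerate the $2$- and $3$-dimensional nilpotent weakly associative algebras (the $2$-dimensional ones follow from \cite[Section~7.1]{kv16}).

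For each such base algebra $A$ of dimension $4-s$, I would determine the space of $\mathbb{C}^s$-valued bilinear forms $\theta$ on $A$ whose induced extension still satisfies the identity of the variety in question (weak associativity, or both identities $x(yz)=(xy)z+y(xz)$ and $(xy)z=(xz)y+x(yz)$), modulo coboundaries of linear maps $A\to \mathbb{C}^s$. This produces a variety-specific second cohomology, on which the automorphism group $\mathrm{Aut}(A)$ acts naturally. Listing orbit representatives of $s$-tuples of cohomology classes with trivial common annihilator in the centre, and reading off an explicit multiplication table for each representative, yields a complete list of $4$-dimensional nilpotent extensions staying inside the variety. To reach the statement of Theorem~A one then removes the $2$-step nilpotent algebras (for the symmetric Leibniz list) and the algebras that are already commutative or symmetric Leibniz (for the weakly associative list), comparing with the known classifications in those subvarieties.

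The main obstacle I anticipate is the orbit analysis in the weakly associative case: the defining identity is strictly weaker than the symmetric Leibniz identities, so the variety-cohomology spaces over the small base algebras are comparatively large, and the automorphism groups typically act with stabilizers that only partially collapse affine families of cohomology classes. The two one-parameter families and nine sporadic classes in the statement reflect this partial collapse, and separating genuinely non-isomorphic parameter values from those related by an automorphism will require careful case-by-case normalization (essentially fixing canonical representatives on a projectivized cocycle space). A secondary difficulty is bookkeeping: one must cross-check algebras arising from central extensions of different base algebras to eliminate duplicate isomorphism classes, and must verify that each listed algebra is indeed non-$2$-step nilpotent, non-commutative, and non-symmetric Leibniz as claimed.
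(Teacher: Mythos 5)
Your proposal matches the paper's method: the paper runs exactly this Skjelbred--Sund procedure, taking the $3$-dimensional nilpotent weakly associative algebras from the known classification, computing the variety-restricted cohomology spaces ${\rm H}^2_{\mathcal W}\supseteq{\rm H}^2_{\mathcal C},{\rm H}^2_{\mathcal S}$ to filter out extensions that stay commutative or symmetric Leibniz, normalizing ${\rm Aut}$-orbits of cocycles case by case, and invoking \cite[Theorem 2.2]{saidsuper} to reduce the symmetric Leibniz part to central extensions of the Heisenberg Lie algebra ${\mathcal N}_{01}$. This is essentially the same approach, so no further comparison is needed.
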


\begin{theoremB}
Up to isomorphism, there are infinitely many isomorphism classes of  
complex  $4$-dimensional  (non-nilpotent, non-Lie) symmetric Leibniz  algebras, 
described explicitly  in  section \ref{secteoA} in terms of 
$3$ one-parameter families and $6$ additional isomorphism classes.
\end{theoremB}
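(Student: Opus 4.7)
The plan is to follow the same Skjelbred--Sund central extension method that drives Theorem A, but now starting from \emph{non-nilpotent} Lie algebras. By \cite[Theorem~2.2]{saidsuper}, every symmetric Leibniz algebra $A$ sits in a central extension $0 \to V \to A \to L \to 0$ with $L$ a Lie algebra. If $A$ is $4$-dimensional, non-Lie, and non-nilpotent, then $\dim V \ge 1$ (so $\dim L \le 3$), and $L$ must itself be non-nilpotent, since the nilpotency of $A$ is equivalent to that of $L$ whenever $V$ is central. Hence $\dim L \in \{2,3\}$, and $L$ ranges over the finite list of non-nilpotent complex Lie algebras of those dimensions: the non-abelian $2$-dimensional Lie algebra, together with $\mathfrak{sl}_2(\mathbb{C})$ and the standard one-parameter and sporadic families of solvable non-nilpotent $3$-dimensional Lie algebras.

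For each such $L$, I would describe the space of symmetric-Leibniz $2$-cocycles with values in a trivial $L$-module $V$ of dimension $4-\dim L$. Concretely, such a cocycle is a bilinear map $\theta : L \times L \to V$ for which the product $x \cdot y := [x,y]_L + \theta(x,y)$ satisfies
\begin{equation*}
x(yz) = (xy)z + y(xz), \qquad (xy)z = (xz)y + x(yz).
\end{equation*}
These are linear constraints on $\theta$, so after quotienting by the coboundaries induced by linear maps $L\to V$ one obtains a finite-dimensional cocycle space for each $L$. I would then compute the orbits of the natural $\mathrm{Aut}(L) \times \mathrm{GL}(V)$-action on the subset of cocycles whose symmetric part is nonzero (so that the extension is genuinely non-Lie) and whose image does not permit $V$ to split off as a direct summand; each such orbit yields one isomorphism class of $4$-dimensional non-Lie, non-nilpotent symmetric Leibniz algebra whose Lie quotient is $L$.

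Finally, I would assemble the orbits across the different choices of $L$ and verify that algebras arising from non-isomorphic quotients are themselves non-isomorphic. The canonical way to do this is to recover $L$ intrinsically from $A$ as the quotient by the ideal generated by the symmetrized products $xy+yx$, which is central in any symmetric Leibniz algebra. Totalling the surviving orbits should reproduce exactly the $3$ one-parameter families and $6$ additional isomorphism classes asserted in the statement.

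The main obstacle lies in the orbit calculation for the solvable non-nilpotent $3$-dimensional Lie algebras that already carry a parameter (such as $\mathfrak{r}_{3,\lambda}$), because the parameter of $L$ combines with that of the cocycle orbit to produce the continuous families, and one must be careful to avoid double-counting or overlooking isolated specializations (e.g.\ at $\lambda = 0, \pm 1$). A secondary difficulty is certifying that the listed discrete classes are not hidden members of the continuous families; this is handled via standard isomorphism invariants (annihilator, center, and the isomorphism type of the Lie quotient).
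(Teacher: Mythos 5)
Your strategy is sound in outline, but it is genuinely different from what the paper actually does for Theorem B. The paper does not run the central--extension machinery here at all: it simply takes the already--known classification of complex $4$-dimensional non-nilpotent, non-Lie \emph{one-sided} Leibniz algebras from \cite{ikv17} and checks, algebra by algebra, which members of that list also satisfy the second (left) Leibniz identity, i.e.\ which are symmetric. That reduces the whole theorem to a finite verification on an existing list, at the cost of depending on \cite{ikv17}. Your route---realizing every non-Lie symmetric Leibniz algebra as a central extension of a non-nilpotent Lie algebra $L$ of dimension $2$ or $3$ by a symmetric Leibniz cocycle, then computing $\operatorname{Aut}(L)$-orbits---is self-contained and is exactly the method the paper uses for Theorems A and C, so it would certainly work; but it requires substantially more computation (cocycle spaces and orbit analyses over $\mathfrak{r}_2$, $\mathfrak{r}_2\oplus\mathbb{C}$, the parametric solvable family, the Heisenberg-free decomposables, and $\mathfrak{sl}_2(\mathbb{C})$, for which one must check that no nontrivial symmetric Leibniz cocycles survive). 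Two technical points you should tighten if you carry this out: first, for the orbit--isomorphism correspondence to apply verbatim you should take the kernel of the extension to be $\operatorname{Ann}(A)$ (which contains, but may strictly contain, the ideal generated by the elements $xy+yx$) and impose $\operatorname{Ann}(\theta)\cap\operatorname{Ann}(L)=0$, rather than quotienting only by the symmetrized products; second, since $L$ now varies in a continuous family, the invariant you use to separate extensions of non-isomorphic $L$'s must be $A/\operatorname{Ann}(A)$ and not merely the quotient by the symmetrization ideal. With those adjustments your plan reproduces the list; as written it is a correct alternative proof scheme rather than a reconstruction of the paper's argument.
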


\begin{theoremC}
Up to isomorphism, there are infinitely many isomorphism classes of  
complex  $5$-dimensional  (non-$2$-step nilpotent) nilpotent symmetric Leibniz  algebras, 
described explicitly  in  section \ref{secteoB} in terms of 
$1$ two-parameter family,
$6$ one-parameter families and 
$42$ additional isomorphism classes.
\end{theoremC}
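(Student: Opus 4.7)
\medskip
\noindent\textbf{Proof proposal.} The plan is to apply the Skjelbred--Sund central extension machinery within the variety of symmetric Leibniz algebras, exactly as described in the Introduction and as already used to obtain Theorem~A. Since every nilpotent algebra has non-trivial annihilator, any complex $5$-dimensional nilpotent symmetric Leibniz algebra $L$ arises as a one-dimensional central extension $L=A\oplus\langle e\rangle$ of a nilpotent symmetric Leibniz algebra $A$ of dimension~$4$, where the extension is determined by a cocycle $\theta\in Z^{2}_{sL}(A,\CC)$. Imposing that $L$ is not $2$-step nilpotent, together with the usual Skjelbred--Sund non-splitting condition $\mathrm{Ann}(\theta)\cap\mathrm{Ann}(A)=0$, selects a distinguished locus in $H^{2}_{sL}(A,\CC)$ to consider.

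First I would collect, from Theorem~A together with the classifications of those $2$-step nilpotent symmetric Leibniz algebras that are not covered by Theorem~A, the complete list of nilpotent symmetric Leibniz algebras of dimensions $\leq 4$ that can occur as a base $A$. For each such $A$, I would next compute the space $Z^{2}_{sL}(A,\CC)$ of bilinear maps $\theta\colon A\times A\to\CC$ satisfying the two symmetric Leibniz cocycle identities
\begin{center}
$\theta(x,yz)=\theta(xy,z)+\theta(y,xz)$ \ and \ $\theta(xy,z)=\theta(xz,y)+\theta(x,yz),$
\end{center}
quotient by the coboundary subspace $B^{2}(A,\CC)$, and thereby obtain $H^{2}_{sL}(A,\CC)$. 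I would then describe $\mathrm{Aut}(A)$ explicitly in matrix form and compute the induced linear action on $H^{2}_{sL}(A,\CC)$.

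With these data, the desired $5$-dimensional algebras are in bijection with $\mathrm{Aut}(A)$-orbits on the Skjelbred--Sund locus inside $H^{2}_{sL}(A,\CC)$. A canonical cocycle representative of each orbit produces one isomorphism class, and these are assembled into the final list; across different base algebras, any coincidences among the resulting $L$'s are detected and removed using isomorphism invariants such as the dimensions of $\mathrm{Ann}(L)$ and $L^{2}$, or the Jordan structure of a generic right multiplication.

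The principal obstacle I anticipate is the combinatorial bulk of the case analysis. For the $4$-dimensional bases with small automorphism groups the cohomology $H^{2}_{sL}(A,\CC)$ can be of moderately large dimension, and the orbit space must be stratified into a generic locus and several degeneration strata: this stratification is precisely what produces the one two-parameter family, the six one-parameter families, and the $42$ sporadic classes announced in the statement. A related technical difficulty is the cross-comparison between the lists coming from non-isomorphic base algebras $A$, which is delicate whenever two distinct bases admit central extensions leading to the same $L$; keeping the final enumeration irredundant will therefore be the most error-prone part of the argument.
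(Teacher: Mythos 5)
Your general framework (Skjelbred--Sund central extensions inside the variety of symmetric Leibniz algebras) is the right one, but the reduction you propose is incomplete in a way that loses part of the classification. Realizing every $5$-dimensional nilpotent symmetric Leibniz algebra as a \emph{one}-dimensional central extension of a $4$-dimensional base, subject to the condition $\operatorname{Ann}(\theta)\cap\operatorname{Ann}(A)=0$, only produces algebras whose annihilator is exactly one-dimensional, because $\operatorname{Ann}(A_{\theta})=(\operatorname{Ann}(\theta)\cap\operatorname{Ann}(A))\oplus{\mathbb V}$. The final list, however, contains non-$2$-step-nilpotent algebras with a two-dimensional annihilator --- for instance $\mathbb{S}_{10}^{\alpha}$, $\mathbb{S}_{11}$--$\mathbb{S}_{15}$ and $\mathbb{S}_{20}$, in which both $e_4$ and $e_5$ are annihilator elements --- and by the uniqueness statement of Lemma 1 such algebras arise only as \emph{two}-dimensional central extensions of a $3$-dimensional algebra. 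The paper accordingly splits the computation into two blocks: the $2$-dimensional central extensions of the Heisenberg algebra ${\mathcal N}_{01}$ (a lengthy orbit analysis on the Grassmannian $G_2$ of the relevant cohomology space, occupying section \ref{N2-2dim}) and the $1$-dimensional central extensions of $4$-dimensional bases. Your proposal omits the first block entirely, so a substantial portion of the announced one-parameter families and sporadic classes would be missing.

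A second, less serious divergence concerns the choice of bases: you take all $4$-dimensional nilpotent symmetric Leibniz algebras, which is what forces the cross-comparison you flag as the most error-prone step. The paper avoids this by invoking Theorem 2.2 of \cite{saidsuper}: every symmetric Leibniz algebra is a central extension of a Lie algebra, so only the nilpotent Lie algebras ${\mathcal N}_{01}$, ${\mathfrak n}_{3}$ and ${\mathfrak n}_{4}$ need to be considered as bases, with cocycles taken in the symmetric Leibniz cocycle space rather than the Chevalley--Eilenberg one. This both shortens the case analysis and removes the redundancy problem across non-isomorphic bases. If you repair the gap by adding the $2$-dimensional extensions of the $3$-dimensional Heisenberg algebra, and preferably also adopt the Lie-base reduction, your argument becomes essentially the paper's.
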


 The degenerations between the (finite-dimensional) algebras from a certain variety $\mathfrak{V}$ defined by a set of identities have been actively studied in the past decade.
The description of all degenerations allows one to find the so-called rigid algebras and families of algebras, i.e. those whose orbit closures under the action of the general linear group form irreducible components of $\mathfrak{V}$
(with respect to the Zariski topology). 
We list here some works in which the rigid algebras of the varieties of
all $4$-dimensional Leibniz algebras \cite{ikv17},
all $4$-dimensional nilpotent terminal algebras \cite{kkp20},
all $4$-dimensional nilpotent commutative algebras \cite{fkkv},
all $6$-dimensional nilpotent binary Lie algebras \cite{ack},
all $6$-dimensional nilpotent anticommutative algebras \cite{kkl20},
all $8$-dimensional dual Mock Lie algebras \cite{ckls}
have been found.
A full description of degenerations has been obtained  
for $2$-dimensional algebras in \cite{kv16}, 
for $4$-dimensional Lie algebras in \cite{BC99},
for $4$-dimensional Zinbiel and  $4$-dimensional nilpotent Leibniz algebras in \cite{kppv},
for $6$-dimensional nilpotent Lie algebras in \cite{S90,GRH},  
for $8$-dimensional $2$-step nilpotent anticommutative algebras \cite{ale3},
and for $(n+1)$-dimensional $n$-Lie algebras.
Our main results related to the geometric classification of cited varieties are summarized below. 
\begin{theoremD}
The variety of complex $4$-dimensional nilpotent symmetric Leibniz algebras has 
dimension {\it 11 }  and it has 
three  irreducible components (in particular, there is only one rigid algebra in this variety). The variety of $4$-dimensional nilpotent weakly associative   algebras  has dimension $16$  and it has
  three irreducible components 
(in particular, there is only one rigid algebra in this variety).

\end{theoremD}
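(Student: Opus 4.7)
The plan is to feed the complete isomorphism classification provided by Theorem~A into the standard Burde--Steinhoff/Grunewald--O'Halloran machinery for degenerations, and then identify irreducible components and dimensions by a systematic orbit-count. I will treat the two varieties in parallel, since the mechanics are identical; only the lists of algebras differ.

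First I will compute, for every algebra $A$ appearing in the Theorem~A classification (both the finitely many isolated classes and one representative of each parametric family), the dimension of the derivation algebra $\dim\mathrm{Der}(A)$, and hence the dimension of its $\mathrm{GL}_4$-orbit $\dim\mathrm{Orb}(A)=16-\dim\mathrm{Der}(A)$. For a parametric family $A(\alpha)$ I will also compute $\dim\bigl(\bigcup_\alpha\mathrm{Orb}(A(\alpha))\bigr)$, which is typically $\dim\mathrm{Orb}(A(\alpha))+1$ for generic~$\alpha$. The candidates for irreducible components are then exactly those orbits, or closures of unions of orbits along a family, whose dimension is maximal among all orbit closures that contain them. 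The expected numerology is that, in both varieties, this selection yields three maximal pieces, with one of them being the closure of a single rigid orbit (accounting for the ``only one rigid algebra'' claim) and the other two being closures of one-parameter families; in the symmetric Leibniz case the ambient dimension totals $11$ and in the weakly associative case $16$.

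Next I will establish the degenerations. For each pair $(A,B)$ from the classification where $B$ should lie in $\overline{\mathrm{Orb}(A)}$, I will exhibit an explicit parametrized basis $E^t=(e_1^t,e_2^t,e_3^t,e_4^t)$ of $A$ with entries in $\mathbb{C}[t,t^{-1}]$ such that the structure constants of $A$ in the basis $E^t$ tend, as $t\to 0$, to the structure constants of $B$. The needed chains are short because the derivation dimensions (hence orbit dimensions) are already known, and the usual lower-semicontinuity of $\dim\mathrm{Der}$ drastically prunes the candidate pairs. For the parametric families, I will also exhibit degenerations $A(f(t))\to B$ with $f(t)\to\infty$ or $f(t)\to 0$, handling the compactification of the parameter.

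The genuinely technical step is to prove the non-degenerations that single out the three components from all other orbit closures. For this I will use the standard invariant arguments: the dimension of the annihilator, of $A^2$, of the center, of $A\cdot A^2 + A^2\cdot A$, the nilindex, the type of the associated graded, and invariants of the form ``$\dim\{x\in A\mid L_x=0\}$'' etc.; each such quantity is semicontinuous on degenerations in a known direction, so any claimed non-degeneration $A\not\to B$ reduces to a finite check on these invariants. When elementary invariants are insufficient, I will fall back on the closed-subset argument: exhibit a $\mathrm{GL}_4$-invariant Zariski closed condition satisfied by all algebras in $\overline{\mathrm{Orb}(A)}$ but violated by $B$.

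Finally, assembling everything, the irreducible components are precisely the maximal elements (under inclusion of orbit closures) among the candidates constructed in the first step, and the identified rigid algebra is the unique algebra whose orbit closure is a component and which does not sit inside any parametric family. The dimensions $11$ and $16$ are read off as the maxima of $\dim\mathrm{Orb}(A)$, respectively $\dim\mathrm{Orb}(A(\alpha))+1$, over the three components of each variety. The main obstacle I anticipate is the sheer bookkeeping for the weakly associative variety, which contains many more classes than the symmetric Leibniz one, together with a handful of borderline non-degenerations where elementary invariants fail and a tailored closed-subset invariant must be constructed by hand.
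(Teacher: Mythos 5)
Your overall machinery (orbit dimensions via $16-\dim\mathrm{Der}$, explicit parametrized bases for degenerations, semicontinuous invariants and invariant closed subsets for non-degenerations, maximality of orbit closures) is exactly the machinery the paper uses. However, there is a genuine gap in your input data: Theorem~A is \emph{not} a complete isomorphism classification of either variety. For symmetric Leibniz algebras it lists only the five non-$2$-step-nilpotent algebras and refers the (infinite, multi-parameter) $2$-step nilpotent part to \cite{kppv}; for weakly associative algebras it lists only the non-commutative, non-symmetric-Leibniz ones and refers the commutative part to \cite{fkkv} and the symmetric Leibniz part to the first half of the theorem. If you feed only the explicitly listed algebras into the degeneration machinery, you will miss most of the answer: in the symmetric Leibniz case two of the three components, $\overline{\{\mathcal{O}(\mathfrak{N}_2(\alpha))\}}$ and $\overline{\{\mathcal{O}(\mathfrak{N}_3(\alpha))\}}$, consist entirely of $2$-step nilpotent algebras absent from your list, and in the weakly associative case the component $\overline{\{\mathcal{O}(\mathcal{C}_{19}^{\alpha})\}}$ is purely commutative and is the one responsible for the dimension being $16$ (the largest non-commutative orbit, that of $\mathcal{W}_{06}^{\alpha}$, only has dimension $15$). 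Your ``expected numerology'' of one rigid algebra plus two one-parameter families happens to be right, but it cannot be derived from the Theorem~A list alone.

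The paper closes this gap by importing the already-known component decompositions of the larger ambient subvarieties: from \cite{kppv} it reads off that the $4$-dimensional nilpotent Leibniz components restrict to $\mathfrak{N}_2(\alpha)$, $\mathfrak{N}_3(\alpha)$ and $\mathcal{S}_{01}$ on the symmetric Leibniz locus, and from \cite{fkkv} that the nilpotent commutative algebras form a single $16$-dimensional component generated by $\mathcal{C}_{19}^{\alpha}$. It then only needs to (i) show all the new $\mathcal{W}$-algebras degenerate from $\mathcal{W}_{06}^{\alpha}$, (ii) note that $\mathcal{W}_{06}^{\alpha}$ is maximal among the non-commutative orbits by dimension count, and (iii) separate $\mathcal{S}_{01}$ from $\overline{\mathcal{O}(\mathcal{W}_{06}^{\alpha})}$ with the closed condition $\{A_1^2\subseteq A_3,\ A_1^3\subseteq A_4,\ c_{12}^3=c_{21}^3\}$. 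To repair your argument you must either cite those prior geometric classifications as the paper does, or extend your bookkeeping to representatives of the full $2$-step nilpotent and commutative families, which is a substantially larger computation than the one you have planned.
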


\begin{theoremE}
The variety of  complex $4$-dimensional  symmetric Leibniz     algebras  has 
dimension $13$  and it has 
five irreducible components (in particular, there is only one rigid algebra in this variety). In particular, it has no Vergne--Grunewald--O'Halloran Property (there is an irreducible component formed by only nilpotent algebras), but on the other hand, it has Vergne Property
(there are no rigid nilpotent algebras).
\end{theoremE}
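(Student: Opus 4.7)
The plan is to assemble a complete list of isomorphism classes in the variety by combining the algebraic classifications of Theorems A and B with the classical lists of $4$-dimensional Lie algebras (Burde--Steinhoff) and of $4$-dimensional $2$-step nilpotent symmetric Leibniz algebras. For every algebra $A$, or a generic member of a parametric family, I would compute $\dim \mathrm{Der}(A)$ and then $\dim \mathrm{Orb}(A) = 16 - \dim \mathrm{Der}(A)$, remembering that the closure of a continuous family has dimension equal to the generic orbit dimension plus the number of essential parameters. This produces the candidate list of closure dimensions from which the component count and the advertised value $13$ must be read off.

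Next I would identify the candidates for irreducible components as closures of single orbits of locally maximal dimension (the rigid algebras) together with closures of parametric families not swallowed by any larger orbit closure. To rule out inclusions $\overline{\mathrm{Orb}(A)} \supseteq \mathrm{Orb}(B)$ I would use the standard battery of semicontinuous invariants: dimensions of the annihilator, the centre, the derived subalgebra, the successive terms of the lower central and derived series, ranks of the symmetric and anticommutative tensors $(x,y)\mapsto xy+yx$ and $(x,y)\mapsto xy-yx$, and Jordan types of the nilpotent operators $L_x$, $R_x$, $\mathrm{ad}_x$. When no invariant obstructs a putative degeneration $A\to B$, I would construct an explicit one-parameter subgroup $g(t)\in \mathrm{GL}_4$ and verify that the transported structure constants converge to those of $B$ as $t\to 0$.

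The main technical obstacle is the combinatorial bookkeeping for the parametric families. One has to determine, for each of the three one-parameter families of Theorem B and for the residual families issued from the Lie part and from Theorem A (restricted to the symmetric Leibniz branch), which family is absorbed into the closure of which. In particular, Theorem D gives three irreducible components inside the nilpotent cone, and the delicate step is to decide which of them survive as components of the ambient variety once non-nilpotent degenerations are taken into account. I expect exactly one of those three to persist, so that the final tally reads $5 = 1 + (5-1)$ with a single nilpotent component contributed by the nilpotent stratum.

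Finally I would assemble the results: five irreducible components, the unique rigid algebra realising the maximum orbit dimension $13$, which settles the dimension count and the first half of the statement. The Vergne--Grunewald--O'Halloran property then fails, because the surviving nilpotent component consists entirely of nilpotent algebras. On the other hand, this component is the closure of a parametric family rather than of a single orbit, so no nilpotent algebra is rigid and the Vergne property holds. These two observations, together with the explicit identification of the unique rigid algebra as a non-nilpotent one, complete the proof.
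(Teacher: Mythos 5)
Your general method (derivation dimensions, semicontinuous invariants, explicit one-parameter degenerations) is the standard one and is indeed what the paper uses, though the paper shortcuts most of the bookkeeping by importing the geometric classification of all $4$-dimensional one-sided Leibniz algebras from \cite{ikv17}, which already tells it that $\mathcal{S}_{01},\mathfrak{L}_{14},\mathfrak{L}_{16},\mathfrak{L}_{36}$ cannot give components and that $\mathfrak{L}_{15}^{\alpha}$ does. However, two of your concrete predictions are wrong, and following them would produce an incorrect decomposition.

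First, your expectation that exactly one of the three nilpotent components of Theorem D survives in the ambient variety is false: \emph{two} survive. The actual tally is $5=2+3$, namely the two nilpotent families $\mathcal{C}_1=\overline{\{\mathcal{O}(\mathfrak{N}_2(\alpha))\}}$ and $\mathcal{C}_2=\overline{\{\mathcal{O}(\mathfrak{N}_3(\alpha))\}}$ together with $\overline{\mathcal{O}(\mathfrak{L}_{02})}$, $\overline{\{\mathcal{O}(\mathfrak{L}_{15}^{\alpha})\}}$ and $\overline{\{\mathcal{O}(\mathfrak{L}_{24}^{\alpha})\}}$; only the nilpotent component $\overline{\mathcal{O}(\mathcal{S}_{01})}$ is absorbed. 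The paper separates $\mathfrak{N}_2(\alpha)$ and $\mathfrak{N}_3(\alpha)$ from the non-nilpotent components by two specific obstructions: $\mathfrak{L}_{24}^{\alpha}$ and $\mathfrak{L}_{02}$ have a $3$-dimensional anticommutative subalgebra while the $\mathfrak{N}_i(\alpha)$ do not, and $\mathfrak{L}_{15}^{\alpha}$ lies in the closed set $\{A_1^2\subseteq A_3,\ c_{11}^3=c_{22}^3=0\}$ while the $\mathfrak{N}_i(\alpha)$ do not. Your proof would need these (or equivalent) arguments; the generic invariants you list do not obviously suffice, and your count $5=1+4$ cannot be made to work.

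Second, you identify the unique rigid algebra as the one ``realising the maximum orbit dimension $13$.'' This conflates rigidity with maximal orbit dimension. The rigid algebra is $\mathfrak{L}_{02}$, whose orbit closure is only $12$-dimensional; the dimension $13$ of the variety is attained by the component $\overline{\{\mathcal{O}(\mathfrak{L}_{15}^{\alpha})\}}$, which is the closure of a one-parameter family and hence contains no rigid algebra. A rigid algebra is simply one whose orbit closure is an irreducible component, and in general it need not have the largest orbit in the variety. Your concluding paragraph, which derives the dimension count from the rigid algebra, therefore does not go through as written.
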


\begin{theoremF}
The variety of  complex $5$-dimensional  nilpotent symmetric Leibniz     algebras  has 
dimension $24$  and it has six irreducible components 
(in particular, there are no rigid algebras in this variety.).
 
\end{theoremF}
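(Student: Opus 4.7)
The plan is to build the geometric classification on top of the algebraic classification already established in Theorem C, following the standard degeneration-theoretic recipe used, for instance, in \cite{ikv17, kkp20, fkkv, ack, kkl20, ckls} for related varieties. Denote the variety of $5$-dimensional nilpotent symmetric Leibniz algebras by $\mathfrak{V}$. Each algebra in the list of Theorem C corresponds to a $\mathrm{GL}_5(\mathbb{C})$-orbit in $\mathfrak{V}$, and each one-parameter (resp.\ two-parameter) family sweeps out a constructible subset whose closure gives a candidate irreducible component. Our job is to decide which of these orbits and families have closures that are maximal under the degeneration partial order; those, and only those, give the irreducible components.

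First, I would compute, for every algebra $A$ in the classification, the dimension of its orbit $\dim \orb(A) = 25 - \dim \operatorname{Der}(A)$, using that the stabilizer of $A$ under $\mathrm{GL}_5$ is $\operatorname{Aut}(A)$ and hence has the same dimension as $\operatorname{Der}(A)$. For a $k$-parameter family $A(\alpha_1,\dots,\alpha_k)$, the closure of the union of its orbits has dimension $\dim \orb(A(\alpha)) + k$ generically (after checking that the parametrization is locally injective modulo isomorphism). Summing or maximizing these dimensions will produce the total dimension bound, and the claim $\dim \mathfrak{V} = 24$ should come out as the maximum among the candidates.

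Next, I would organize the data by the standard primary invariants that are lower/upper semicontinuous under degenerations: dimensions of $\operatorname{Der}(A)$, the annihilator, the centre, the square $A^2$, the powers $A^n$, the associator ideal, and the maximal abelian/Lie ideal; also the ranks of the linear maps attached to generic elements of $A$. These invariants allow me to rule out most candidate degenerations by simple closedness arguments (if $\mu$ satisfies a closed condition that $\lambda$ does not, then $\lambda \not\to \mu$). For the remaining candidate degenerations — typically those that would collapse one component into another — I would construct explicit parametrized bases $g_t \in \mathrm{GL}_5(\mathbb{C}[t,t^{-1}])$ realising $\lambda \to \mu$ as $t \to 0$, the standard technique from \cite{BC99, GRH, S90, kppv}. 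Special care is needed for degenerations within or between the families, where one also has to pass to the limit in the parameters (using the projective closure of the parameter space if a parameter tends to infinity).

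Finally, the candidate irreducible components will be exactly the closures of the orbit-families that are maximal in the resulting degeneration poset. Verifying that there are exactly six of them, and that together they cover $\mathfrak{V}$, comes down to checking that every algebra in the Theorem C list degenerates to (i.e.\ lies in the closure of) at least one of the six candidates. The non-existence of rigid algebras is then automatic: each $0$-parameter orbit must sit in the closure of a positive-dimensional family, and the explicit degenerations produced in the previous step will exhibit this. The main obstacle is bookkeeping: the list contains a two-parameter family, six one-parameter families and $42$ isolated classes, so there are on the order of several thousand candidate degeneration pairs. Most are killed quickly by a single invariant, but a handful of delicate cases — typically degenerations between the parametric families, and degenerations into the would-be components coming from the parametric families — will require the careful construction of an explicit $g_t$ and a check that no spurious invariant obstructs the limit. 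Organising these invariants into a table, in the style of \cite{kppv, kkp20}, is the practical way to keep the argument tractable.
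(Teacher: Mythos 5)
Your general recipe (orbit dimensions via $\dim\orb(A)=25-\dim\operatorname{Der}(A)$, semicontinuous invariants and $\mathfrak{B}$-stable closed sets to exclude degenerations, explicit parametrized bases and parametrized indices to establish them) is exactly the machinery the paper uses. However, there is a genuine gap in the scope of your argument: you propose to determine the irreducible components of $\mathfrak{V}$ by analysing only the orbits of the algebras listed in Theorem C, and to verify coverage by "checking that every algebra in the Theorem C list degenerates to at least one of the six candidates." But Theorem C deliberately excludes the $2$-step nilpotent algebras (and the split algebras), and these are still symmetric Leibniz algebras, hence lie in $\mathfrak{V}$. In the paper's proof, three of the six irreducible components --- $\overline{\{\mathcal{O}({\mathfrak V}_{4+1})\}}$, $\overline{\{\mathcal{O}({\mathfrak V}_{3+2})\}}$ and $\overline{\{\mathcal{O}({\mathfrak V}_{2+3})\}}$ --- come precisely from the $2$-step nilpotent subvariety, whose geometric classification is imported from \cite{ikp20}; the split algebras are then absorbed into $\mathfrak V_{4+1}$, $\mathfrak V_{3+2}$ and $\mathcal O(\mathcal S_{01})$ using Theorem D. Restricting to the Theorem C list, the largest closure you would find is $\overline{\{\mathcal{O}(\mathbb{S}_{21}^{\alpha,\beta})\}}$ of dimension $21$, so your procedure would neither produce the claimed dimension $24$ (which is $\dim\mathcal{O}({\mathfrak V}_{3+2})$) nor find more than three components. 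The fix is to start from the decomposition of $\mathfrak V$ into the $2$-step nilpotent part (handled by the known classification) plus the non-$2$-step-nilpotent part (handled by Theorem C), and only then run your degeneration analysis, including the cross-degenerations and non-degenerations between the ${\mathfrak V}$-families and the $\mathbb{S}$-families.
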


\section{The algebraic classification of nilpotent  algebras}
\subsection{Method of classification of nilpotent algebras}
Throughout this paper, we use the notations and methods well written in \cite{  hac16},
which we have adapted for the weakly associative case with some modifications.
Further in this section we give some important definitions.

Let $({\bf A}, \cdot)$ be a complex  weakly associative   algebra 
and $\mathbb V$ be a complex  vector space. The $\mathbb C$-linear space ${\rm Z^{2}}\left(
\bf A,\mathbb V \right) $ is defined as the set of all  bilinear maps $\theta  \colon {\bf A} \times {\bf A} \longrightarrow {\mathbb V}$ such that
\[ \theta(xy,z)-\theta(x,yz)+\theta(yz,x)-\theta(y,zx)=\theta(yx,z)-\theta(y,xz). \]

These elements will be called {\it cocycles}. For a
linear map $f$ from $\bf A$ to  $\mathbb V$, if we define $\delta f\colon {\bf A} \times
{\bf A} \longrightarrow {\mathbb V}$ by $\delta f  (x,y ) =f(xy )$, then $\delta f\in {\rm Z^{2}}\left( {\bf A},{\mathbb V} \right) $. We define ${\rm B^{2}}\left({\bf A},{\mathbb V}\right) =\left\{ \theta =\delta f\ : f\in {\rm Hom}\left( {\bf A},{\mathbb V}\right) \right\} $.
%One can easily check that ${\rm B^{2}}(\bf A,\mathbb V)$ is a linear subspace of ${\rm Z^{2}}\left( {\bf A},{\mathbb V}\right) $; %its elements are called {\it coboundaries}. 
We define the {\it second cohomology space} ${\rm H^{2}}\left( {\bf A},{\mathbb V}\right) $ as the quotient space ${\rm Z^{2}}
\left( {\bf A},{\mathbb V}\right) \big/{\rm B^{2}}\left( {\bf A},{\mathbb V}\right) $.
% The equivalence class of $%
%\theta \in {\rm Z^{2}}\left( {\bf A},{\mathbb V}\right) $ will be denoted by $\left[
%\theta \right] \in {\rm H^{2}}\left( {\bf A},{\mathbb V}\right) $.

\

Let $\operatorname{Aut}({\bf A}) $ be the automorphism group of  ${\bf A} $ and let $\phi \in \operatorname{Aut}({\bf A})$. For $\theta \in
{\rm Z^{2}}\left( {\bf A},{\mathbb V}\right) $ define  the action of the group $\operatorname{Aut}({\bf A}) $ on ${\rm Z^{2}}\left( {\bf A},{\mathbb V}\right) $ by $\phi \theta (x,y)
=\theta \left( \phi \left( x\right) ,\phi \left( y\right) \right) $.  It is easy to verify that
 ${\rm B^{2}}\left( {\bf A},{\mathbb V}\right) $ is invariant under the action of $\operatorname{Aut}({\bf A}).$  
 So, we have an induced action of  $\operatorname{Aut}({\bf A})$  on ${\rm H^{2}}\left( {\bf A},{\mathbb V}\right)$.

\

Let $\bf A$ be a  weakly associative  algebra of dimension $m$ over  $\mathbb C$ and ${\mathbb V}$ be a $\mathbb C$-vector
space of dimension $k$. For the bilinear map $\theta$, define on the linear space ${\bf A}_{\theta } = {\bf A}\oplus {\mathbb V}$ the
bilinear product `` $\left[ -,-\right] _{{\bf A}_{\theta }}$'' by $\left[ x+x^{\prime },y+y^{\prime }\right] _{{\bf A}_{\theta }}=
 xy +\theta(x,y) $ for all $x,y\in {\bf A},x^{\prime },y^{\prime }\in {\mathbb V}$.
The algebra ${\bf A}_{\theta }$ is called a $k$-{\it dimensional central extension} of ${\bf A}$ by ${\mathbb V}$. One can easily check that ${\bf A_{\theta}}$ is a  weakly associative 
algebra if and only if $\theta \in {\rm Z^2}({\bf A}, {\mathbb V})$.

Call the
set $\operatorname{Ann}(\theta)=\left\{ x\in {\bf A}:\theta \left( x, {\bf A} \right)+ \theta \left({\bf A} ,x\right) =0\right\} $
the {\it annihilator} of $\theta $. We recall that the {\it annihilator} of an  algebra ${\bf A}$ is defined as
the ideal $\operatorname{Ann}(  {\bf A} ) =\left\{ x\in {\bf A}:  x{\bf A}+ {\bf A}x =0\right\}$. Observe
 that
$\operatorname{Ann}\left( {\bf A}_{\theta }\right) =(\operatorname{Ann}(\theta) \cap\operatorname{Ann}({\bf A}))
 \oplus {\mathbb V}$.

\

The following result shows that every algebra with a non-zero annihilator is a central extension of a smaller-dimensional algebra.

\begin{lemma}
Let ${\bf A}$ be an $n$-dimensional  weakly associative algebra such that $\dim (\operatorname{Ann}({\bf A}))=m\neq0$. Then there exists, up to isomorphism, a unique $(n-m)$-dimensional  weakly associative  algebra ${\bf A}'$ and a bilinear map $\theta \in {\rm Z^2}({\bf A'}, {\mathbb V})$ with $\operatorname{Ann}({\bf A'})\cap\operatorname{Ann}(\theta)=0$, where $\mathbb V$ is a vector space of dimension m, such that ${\bf A} \cong {{\bf A}'}_{\theta}$ and
 ${\bf A}/\operatorname{Ann}({\bf A})\cong {\bf A}'$.
\end{lemma}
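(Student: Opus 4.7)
The plan is to carry out the classical Skjelbred-Sund construction, adapted to the weakly associative setting. Since the defining identity is multilinear, the quotient $\mathbf{A}':=\mathbf{A}/\operatorname{Ann}(\mathbf{A})$, of dimension $n-m$, inherits a well-defined weakly associative product, and I would take this as the candidate $\mathbf{A}'$, with $\mathbb{V}$ any vector space identified with $\operatorname{Ann}(\mathbf{A})$. Fix a linear complement $\mathbf{A}''$ to $\operatorname{Ann}(\mathbf{A})$ in $\mathbf{A}$, and let $s\colon \mathbf{A}'\to\mathbf{A}''$ be the inverse of the projection restricted to $\mathbf{A}''$. Then set $\theta(x,y):=s(x)s(y)-s(xy)$, which lies in $\operatorname{Ann}(\mathbf{A})=\mathbb{V}$ because it projects to $0$ in $\mathbf{A}'$.

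The map $(x,v)\mapsto s(x)+v$ is a linear bijection $\mathbf{A}'_{\theta}\to \mathbf{A}$ whose compatibility with multiplication is immediate: since $v,w\in\operatorname{Ann}(\mathbf{A})$, one has $(s(x)+v)(s(y)+w)=s(x)s(y)=s(xy)+\theta(x,y)$, which matches $[x+v,y+w]_{\mathbf{A}'_{\theta}}=xy+\theta(x,y)$. To verify $\theta\in \mathrm{Z}^{2}(\mathbf{A}',\mathbb{V})$ I would substitute $s(x), s(y), s(z)$ into the weakly associative identity for $\mathbf{A}$ and expand each product using $s(u)s(v)=s(uv)+\theta(u,v)$; the $s$-parts cancel because $\mathbf{A}'$ is itself weakly associative, and the $\mathbb{V}$-parts reassemble verbatim into the cocycle equation stated above the lemma. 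This is really the only computation, and it is essentially built into the way $\mathrm{Z}^{2}$ was defined, so I expect no genuine obstacle.

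For the annihilator condition, take $x\in\operatorname{Ann}(\mathbf{A}')\cap\operatorname{Ann}(\theta)$. Then, for every $y\in\mathbf{A}'$, $s(x)s(y)=s(xy)+\theta(x,y)=0$ and, symmetrically, $s(y)s(x)=0$, so $s(x)\in\operatorname{Ann}(\mathbf{A})\cap\mathbf{A}''=\{0\}$ and thus $x=0$. Uniqueness of $\mathbf{A}'$ up to isomorphism follows from the formula $\operatorname{Ann}(\mathbf{A}'_{\theta})=(\operatorname{Ann}(\mathbf{A}')\cap\operatorname{Ann}(\theta))\oplus\mathbb{V}$ recalled just before the statement: under the intersection hypothesis this annihilator equals $\mathbb{V}$, so any algebra $\mathbf{B}$ with $\mathbf{A}\cong \mathbf{B}_{\theta'}$, $\dim\mathbb{V}'=m$ and $\operatorname{Ann}(\mathbf{B})\cap\operatorname{Ann}(\theta')=0$ must satisfy $\mathbf{B}\cong \mathbf{A}/\operatorname{Ann}(\mathbf{A})\cong \mathbf{A}'$, closing the argument.
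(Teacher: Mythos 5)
Your proposal is correct and follows essentially the same route as the paper: the paper realizes $\mathbf{A}'$ as a linear complement of $\operatorname{Ann}(\mathbf{A})$ with product $P(xy)$ and sets $\theta(x,y)=xy-[x,y]_{\mathbf{A}'}$, which is exactly your quotient-plus-section construction $\theta(x,y)=s(x)s(y)-s(xy)$ written in different notation. Your more explicit verification of the cocycle identity and of $\operatorname{Ann}(\mathbf{A}')\cap\operatorname{Ann}(\theta)=0$ only spells out what the paper leaves as an immediate consequence of $\mathbf{A}'_{\theta}\cong\mathbf{A}$ and the annihilator formula.
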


\begin{proof}
Let ${\bf A}'$ be a linear complement of $\operatorname{Ann}({\bf A})$ in ${\bf A}$. Define a linear map $P \colon {\bf A} \longrightarrow {\bf A}'$ by $P(x+v)=x$ for $x\in {\bf A}'$ and $v\in\operatorname{Ann}({\bf A})$, and define a multiplication on ${\bf A}'$ by $[x, y]_{{\bf A}'}=P(x y)$ for $x, y \in {\bf A}'$.
For $x, y \in {\bf A}$, we have
\[P(xy)=P((x-P(x)+P(x))(y- P(y)+P(y)))=P(P(x) P(y))=[P(x), P(y)]_{{\bf A}'}. \]

Since $P$ is a homomorphism $P({\bf A})={\bf A}'$ is a  weakly associative algebra and
 ${\bf A}/\operatorname{Ann}({\bf A})\cong {\bf A}'$, which gives us the uniqueness. Now, define the map $\theta \colon {\bf A}' \times {\bf A}' \longrightarrow\operatorname{Ann}({\bf A})$ by $\theta(x,y)=xy- [x,y]_{{\bf A}'}$.
  Thus, ${\bf A}'_{\theta}$ is ${\bf A}$ and therefore $\theta \in {\rm Z^2}({\bf A'}, {\mathbb V})$ and $\operatorname{Ann}({\bf A'})\cap\operatorname{Ann}(\theta)=0$.
\end{proof}

\

\;
\begin{definition}
Let ${\bf A}$ be an algebra and $I$ be a subspace of $\operatorname{Ann}({\bf A})$. If ${\bf A}={\bf A}_0 \oplus I$
then $I$ is called an {\it annihilator component} of ${\bf A}$.
A central extension of an algebra $\bf A$ without annihilator component is called a {\it non-split central extension}.
\end{definition}

Our task is to find all central extensions of an algebra $\bf A$ by
a space ${\mathbb V}$.  In order to solve the isomorphism problem we need to study the
action of $\operatorname{Aut}({\bf A})$ on ${\rm H^{2}}\left( {\bf A},{\mathbb V}
\right) $. To do that, let us fix a basis $e_{1},\ldots ,e_{s}$ of ${\mathbb V}$, and $
\theta \in {\rm Z^{2}}\left( {\bf A},{\mathbb V}\right) $. Then $\theta $ can be uniquely
written as $\theta \left( x,y\right) =
\displaystyle \sum_{i=1}^{s} \theta _{i}\left( x,y\right) e_{i}$, where $\theta _{i}\in
{\rm Z^{2}}\left( {\bf A},\mathbb C\right) $. Moreover, $\operatorname{Ann}(\theta)=\operatorname{Ann}(\theta _{1})\cap\operatorname{Ann}(\theta _{2})\cap\ldots \cap\operatorname{Ann}(\theta _{s})$. Furthermore, $\theta \in
{\rm B^{2}}\left( {\bf A},{\mathbb V}\right) $\ if and only if all $\theta _{i}\in {\rm B^{2}}\left( {\bf A},
\mathbb C\right) $.
It is not difficult to prove (see \cite[Lemma 13]{hac16}) that given a  weakly associative algebra ${\bf A}_{\theta}$, if we write as
above $\theta \left( x,y\right) = \displaystyle \sum_{i=1}^{s} \theta_{i}\left( x,y\right) e_{i}\in {\rm Z^{2}}\left( {\bf A},{\mathbb V}\right) $ and 
$\operatorname{Ann}(\theta)\cap \operatorname{Ann}\left( {\bf A}\right) =0$, then ${\bf A}_{\theta }$ has an
annihilator component if and only if $\left[ \theta _{1}\right] ,\left[
\theta _{2}\right] ,\ldots ,\left[ \theta _{s}\right] $ are linearly
dependent in ${\rm H^{2}}\left( {\bf A},\mathbb C\right) $.

\;

Let ${\mathbb V}$ be a finite-dimensional vector space over $\mathbb C$. The {\it Grassmannian} $G_{k}\left( {\mathbb V}\right) $ is the set of all $k$-dimensional
linear subspaces of $ {\mathbb V}$. Let $G_{s}\left( {\rm H^{2}}\left( {\bf A},\mathbb C\right) \right) $ be the Grassmannian of subspaces of dimension $s$ in
${\rm H^{2}}\left( {\bf A},\mathbb C\right) $. There is a natural action of $\operatorname{Aut}({\bf A})$ on $G_{s}\left( {\rm H^{2}}\left( {\bf A},\mathbb C\right) \right) $.
Let $\phi \in \operatorname{Aut}({\bf A})$. For $W=\left\langle
\left[ \theta _{1}\right] ,\left[ \theta _{2}\right] ,\dots,\left[ \theta _{s}
\right] \right\rangle \in G_{s}\left( {\rm H^{2}}\left( {\bf A},\mathbb C
\right) \right) $ define $\phi W=\left\langle \left[ \phi \theta _{1}\right]
,\left[ \phi \theta _{2}\right] ,\dots,\left[ \phi \theta _{s}\right]
\right\rangle $. We denote the orbit of $W\in G_{s}\left(
{\rm H^{2}}\left( {\bf A},\mathbb C\right) \right) $ under the action of $\operatorname{Aut}({\bf A})$ by $\operatorname{Orb}(W)$. Given
\[
W_{1}=\left\langle \left[ \theta _{1}\right] ,\left[ \theta _{2}\right] ,\dots,
\left[ \theta _{s}\right] \right\rangle ,W_{2}=\left\langle \left[ \vartheta
_{1}\right] ,\left[ \vartheta _{2}\right] ,\dots,\left[ \vartheta _{s}\right]
\right\rangle \in G_{s}\left( {\rm H^{2}}\left( {\bf A},\mathbb C\right)
\right),
\]
we easily have that if $W_{1}=W_{2}$, then $ \bigcap\limits_{i=1}^{s}\operatorname{Ann}(\theta _{i})\cap \operatorname{Ann}\left( {\bf A}\right) = \bigcap\limits_{i=1}^{s}
\operatorname{Ann}(\vartheta _{i})\cap\operatorname{Ann}( {\bf A}) $, and therefore we can introduce
the set
\[
{\bf T}_{s}({\bf A}) =\left\{ W=\left\langle \left[ \theta _{1}\right] ,
\left[ \theta _{2}\right] ,\dots,\left[ \theta _{s}\right] \right\rangle \in
G_{s}\left( {\rm H^{2}}\left( {\bf A},\mathbb C\right) \right) : \bigcap\limits_{i=1}^{s}\operatorname{Ann}(\theta _{i})\cap\operatorname{Ann}({\bf A}) =0\right\},
\]
which is stable under the action of $\operatorname{Aut}({\bf A})$.

\

Now, let ${\mathbb V}$ be an $s$-dimensional linear space and let us denote by
${\bf E}\left( {\bf A},{\mathbb V}\right) $ the set of all {\it non-split $s$-dimensional central extensions} of ${\bf A}$ by
${\mathbb V}$. By above, we can write
\[
{\bf E}\left( {\bf A},{\mathbb V}\right) =\left\{ {\bf A}_{\theta }:\theta \left( x,y\right) = \sum_{i=1}^{s}\theta _{i}\left( x,y\right) e_{i} \ \ \text{and} \ \ \left\langle \left[ \theta _{1}\right] ,\left[ \theta _{2}\right] ,\dots,
\left[ \theta _{s}\right] \right\rangle \in {\bf T}_{s}({\bf A}) \right\} .
\]
We also have the following result, which can be proved as in \cite[Lemma 17]{hac16}.

\begin{lemma}
 Let ${\bf A}_{\theta },{\bf A}_{\vartheta }\in {\bf E}\left( {\bf A},{\mathbb V}\right) $. Suppose that $\theta \left( x,y\right) =  \displaystyle \sum_{i=1}^{s}
\theta _{i}\left( x,y\right) e_{i}$ and $\vartheta \left( x,y\right) =
\displaystyle \sum_{i=1}^{s} \vartheta _{i}\left( x,y\right) e_{i}$.
Then the  weakly associative algebras ${\bf A}_{\theta }$ and ${\bf A}_{\vartheta } $ are isomorphic
if and only if
$$\operatorname{Orb}\left\langle \left[ \theta _{1}\right] ,
\left[ \theta _{2}\right] ,\dots,\left[ \theta _{s}\right] \right\rangle =
\operatorname{Orb}\left\langle \left[ \vartheta _{1}\right] ,\left[ \vartheta
_{2}\right] ,\dots,\left[ \vartheta _{s}\right] \right\rangle .$$
\end{lemma}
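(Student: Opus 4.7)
The plan is to prove the biconditional by analyzing, in each direction, how an isomorphism of central extensions interacts with the canonical decomposition ${\bf A}_\theta = {\bf A} \oplus {\mathbb V}$, exploiting the non-split condition $\operatorname{Ann}(\theta)\cap \operatorname{Ann}({\bf A}) = 0$ that is built into membership in ${\bf E}({\bf A}, {\mathbb V})$. The crucial starting observation is that the formula $\operatorname{Ann}({\bf A}_\theta) = (\operatorname{Ann}(\theta)\cap\operatorname{Ann}({\bf A})) \oplus {\mathbb V}$ collapses to $\operatorname{Ann}({\bf A}_\theta) = {\mathbb V}$, and the same holds for $\vartheta$. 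Any isomorphism between the two extensions is therefore forced to send ${\mathbb V}$ to ${\mathbb V}$.

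For the ``only if'' direction, suppose $\Phi \colon {\bf A}_\theta \to {\bf A}_\vartheta$ is an algebra isomorphism. Then $\Phi$ restricts to a linear isomorphism $\psi\colon {\mathbb V} \to {\mathbb V}$ and descends to a linear isomorphism of the quotient ${\bf A}_\theta/{\mathbb V} \cong {\bf A} \cong {\bf A}_\vartheta/{\mathbb V}$, which one checks is an automorphism $\phi \in \operatorname{Aut}({\bf A})$. Writing $\Phi(x) = \phi(x) + f(x)$ for $x \in {\bf A}$ and a uniquely determined linear map $f\colon {\bf A} \to {\mathbb V}$, the multiplicativity relation $\Phi(x\cdot_\theta y) = \Phi(x)\cdot_\vartheta \Phi(y)$ reduces, using $\phi(xy)=\phi(x)\phi(y)$, to the identity $\psi\theta(x,y) = \vartheta(\phi(x),\phi(y)) - \delta f(x,y)$. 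Expressing $\psi$ in the basis $e_1,\ldots,e_s$ of ${\mathbb V}$ as an invertible matrix $(b_{ji})$ and matching components gives $\sum_i b_{ji} \theta_i = \phi\vartheta_j - \delta f_j$. Passing to cohomology yields the equality of subspaces $\langle [\theta_1],\ldots,[\theta_s]\rangle = \langle [\phi\vartheta_1],\ldots,[\phi\vartheta_s]\rangle$ in ${\rm H}^{2}({\bf A},{\mathbb C})$, which is precisely the orbit equality.

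For the ``if'' direction, the construction is reversed. Given $\phi \in \operatorname{Aut}({\bf A})$ and an invertible matrix $(b_{ji})$ together with a linear $f\colon {\bf A}\to {\mathbb V}$ realizing $\sum_i b_{ji}\theta_i = \phi\vartheta_j - \delta f_j$, I set $\psi(e_i) = \sum_j b_{ji} e_j$ and define $\Phi(x+v) = \phi(x)+f(x)+\psi(v)$. A direct verification then shows $\Phi\colon {\bf A}_\theta \to {\bf A}_\vartheta$ is an algebra isomorphism, which completes the proof.

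The main obstacle is strictly one of bookkeeping rather than conceptual novelty: the direct-sum splitting ${\bf A}\oplus{\mathbb V}$ is canonical only on the ${\mathbb V}$-factor, so $\Phi$ need not preserve the complement of ${\mathbb V}$, and the shift $f$ is precisely what the coboundary ambiguity $\delta f$ built into the definition of ${\rm H}^2$ is designed to absorb. Since the weakly associative identity enters the argument only through the definition of the cocycle space ${\rm Z}^2({\bf A},{\mathbb V})$, the proof runs formally in parallel with the associative prototype from \cite[Lemma 17]{hac16}, with every algebraic manipulation preserving the defining identity of the variety.
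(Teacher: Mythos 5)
Your proof is correct and follows exactly the standard Skjelbred--Sund argument that the paper invokes by reference to \cite[Lemma 17]{hac16}: the non-split hypothesis forces $\operatorname{Ann}({\bf A}_{\theta})={\mathbb V}$, so any isomorphism preserves ${\mathbb V}$, induces $\phi\in\operatorname{Aut}({\bf A})$ on the quotient, and the multiplicativity condition translates into the cocycles agreeing up to the $\operatorname{Aut}({\bf A})$-action, an invertible change of basis, and a coboundary. The only point worth making explicit is that in the converse direction the invertibility of the matrix $(b_{ji})$ rests on the linear independence of the classes $[\theta_{i}]$ and $[\vartheta_{j}]$, which is guaranteed by membership in the Grassmannian $G_{s}$ built into the definition of ${\bf T}_{s}({\bf A})$.
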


This shows that there exists a one-to-one correspondence between the set of $\operatorname{Aut}({\bf A})$-orbits on ${\bf T}_{s}\left( {\bf A}\right) $ and the set of
isomorphism classes of ${\bf E}\left( {\bf A},{\mathbb V}\right) $. Consequently we have a
procedure that allows us, given a  weakly associative algebra ${\bf A}'$ of
dimension $n-s$, to construct all non-split central extensions of ${\bf A}'$. This procedure is:

\begin{enumerate}
\item For a given  weakly associative algebra ${\bf A}'$ of dimension $n-s $, determine ${\rm H^{2}}( {\bf A}',\mathbb {C}) $, $\operatorname{Ann}({\bf A}')$ and $\operatorname{Aut}({\bf A}')$.

\item Determine the set of $\operatorname{Aut}({\bf A}')$-orbits on ${\bf T}_{s}({\bf A}') $.

\item For each orbit, construct the  weakly associative algebra associated with a
representative of it.
\end{enumerate}

\medskip

The above described method gives all (commutative and non-commutative)  weakly associative algebras. But we are interested in developing this method in such a way that it only gives non-commutative  weakly associative   algebras, because the classification of all commutative  algebras is given in \cite{fkkv}. Clearly, any central extension of a non-commutative  weakly associative algebra is non-commutative. But a commutative algebra may have extensions which are not commutative algebras. More precisely, let ${\mathcal{C}}$ be a commutative algebra and $\theta \in {\rm Z_{\mathcal W}^2}({\mathcal{C}}, {\mathbb C}).$ Then ${\mathcal{C}}_{\theta }$ is a commutative algebra if and only if 
\begin{equation*}
 \theta(x,y)= \theta(y,x). 
 \end{equation*}
for all $x,y\in {\mathcal{C}}.$ Define the subspace ${\rm Z_{\mathcal{C}}^2}({\mathcal{C}},{\mathbb C})$ of ${\rm Z_{\mathcal{W}}^2}({\mathcal{C}},{\mathbb C})$ by
\begin{equation*}
{\rm Z_{\mathcal{C}}^2}({\mathcal{C}},{\mathbb C}) =\left\{\begin{array}{c} \theta \in {\rm Z_{\mathcal W}^2}({\mathcal{C}},{\mathbb C}) : \theta(x,y)=\theta(y,x) \text{ for all } x, y\in {\mathcal{C}}\end{array}\right\}.
\end{equation*}

Observe that ${\rm B^2}({\mathcal{C}},{\mathbb C})\subseteq{\rm Z_{\mathcal{C}}^2}({\mathcal{C}},{\mathbb C}).$
Let ${\rm H_{\mathcal{C}}^2}({\mathcal{C}},{\mathbb C}) =%
{\rm Z_{\mathcal{C}}^2}({\mathcal{C}},{\mathbb C}) \big/{\rm B^2}({\mathcal{C}},{\mathbb C}).$ Then ${\rm H_{\mathcal{C}}^2}({\mathcal{C}},{\mathbb C})$ is a subspace of $%
{\rm H_{\mathcal W}^2}({\mathcal{C}},{\mathbb C}).$ Define 
\begin{eqnarray*}
{\bf R}_{s}({\mathcal{C}})  &=&\left\{ {\mathcal W}\in {\bf T}_{s}({\mathcal{C}}) :{\mathcal W}\in G_{s}({\rm H_{\mathcal{C}}^2}({\mathcal{C}},{\mathbb C}) ) \right\}, \\
{\bf U}_{s}({\mathcal{C}})  &=&\left\{ {\mathcal W}\in {\bf T}_{s}({\mathcal{C}}) :{\mathcal W}\notin G_{s}({\rm H_{\mathcal{C}}^2}({\mathcal{C}},{\mathbb C}) ) \right\}.
\end{eqnarray*}
Then ${\bf T}_{s}({\mathcal{C}}) ={\bf R}_{s}(
{\mathcal{C}}) \mathbin{\mathaccent\cdot\cup} {\bf U}_{s}(
{\mathcal{C}}).$ The sets ${\bf R}_{s}({\mathcal{C}}) $
and ${\bf U}_{s}({\mathcal{C}})$ are stable under the action
of $\operatorname{Aut}({\mathcal{C}}).$ Thus, the  weakly associative  algebras
corresponding to the representatives of $\operatorname{Aut}({\mathcal{C}}) $%
-orbits on ${\bf R}_{s}({\mathcal{C}})$ are commutative  algebras,
while those corresponding to the representatives of $\operatorname{Aut}({\mathcal{C}}%
) $-orbits on ${\bf U}_{s}({\mathcal{C}})$ are non-commutative algebras. Hence, we may construct all non-split non-commutative  weakly associative algebras $%
\bf{A}$ of dimension $n$ with $s$-dimensional annihilator 
from a given weakly associative algebra $\bf{A}%
^{\prime }$ of dimension $n-s$ in the following way:

\begin{enumerate}
\item If $\bf{A}^{\prime }$ is non-commutative, then apply the procedure.

\item Otherwise, do the following:

\begin{enumerate}
\item Determine ${\bf U}_{s}(\bf{A}^{\prime })$ and $%
\operatorname{Aut}(\bf{A}^{\prime }).$

\item Determine the set of $\operatorname{Aut}(\bf{A}^{\prime })$-orbits on ${\bf U%
}_{s}(\bf{A}^{\prime }).$

\item For each orbit, construct the  weakly associative  algebra corresponding to one of its
representatives.
\end{enumerate}
\end{enumerate}

\begin{remark}
Let $\Omega$ be a variety of algebras defined by a family of polynomial identities $\omega$ and $\hat{\Omega}$  be a subvariety of $\Omega$ defined by a family of polynomial identities $\hat{\omega}.$ 
It is easy to see (see, for example, \cite{klp20}), that the presented procedure can be generalized for the search of all $n$-dimensional nilpotent algebras from $\Omega \setminus \hat{\Omega}.$ 
For example, in the present paper, we will find
\begin{enumerate}

\item  Weakly associative non-commutative algebras;
\item Weakly associative non-symmetric Leibniz algebras;
\item Symmetric Leibniz non-2-step nilpotent algebras.
\end{enumerate}

\end{remark}

\begin{remark}
The intersection of commutative algebras and (left, right or symmetric) Leibniz algebras gives the variety of $2$-step nilpotent commutative algebras.
\end{remark}
 
\subsubsection{Notations}
Let us introduce the following notations. Let ${\bf A}$ be a nilpotent algebra with
a basis $\{e_{1},e_{2}, \ldots, e_{n}\}.$ Then by $\Delta_{ij}$\ we will denote the
bilinear form
$\Delta_{ij}:{\bf A}\times {\bf A}\longrightarrow \mathbb C$
with $\Delta_{ij}(e_{l},e_{m}) = \delta_{il}\delta_{jm}.$
The set $\left\{ \Delta_{ij}:1\leq i, j\leq n\right\}$ is a basis for the linear space of
bilinear forms on ${\bf A},$ so every $\theta \in
{\rm Z^2}({\bf A},\bf \mathbb V )$ can be uniquely written as $
\theta = \displaystyle \sum_{1\leq i,j\leq n} c_{ij}\Delta _{{i}{j}}$, where $
c_{ij}\in \mathbb C$.
Let us fix the following notations for our nilpotent algebras:

$$\begin{array}{lll}
 {\mathcal W}_{j}& \mbox{---}& j\mbox{th }4\mbox{-dimensional   weakly associative  
 (non-commutative, non-symmetric Leibniz)  algebra.} \\

\mathcal{S}_{j}& \mbox{---}& j\mbox{th }4\mbox{-dimensional   symmetric Leibniz (non-2-step nilpotent) algebra.} \\

\mathbb{S}_{j}& \mbox{---}& j\mbox{th }5\mbox{-dimensional   symmetric Leibniz (non-2-step nilpotent) algebra.} \\

\mathcal{C}_{j}& \mbox{---}& j\mbox{th }3\mbox{-dimensional   commutative algebra.} \\

{\mathcal N}_{j}& \mbox{---}& j\mbox{th }3\mbox{-dimensional      non-commutative  $2$-step nilpotent algebra.} \\

{\mathfrak N}_{j}& \mbox{---}& j\mbox{th }4\mbox{-dimensional   $2$-step nilpotent algebra.}

\end{array}$$

\subsection{The algebraic classification of nilpotent 
weakly associative algebras}

\subsubsection{The algebraic classification of  $3$-dimensional nilpotent 
weakly associative algebras}
Thanks to \cite{fkkv}, we have the classification of all $3$-dimensional nilpotent algebras.
From this list we are choosing only $3$-dimensional nilpotent weakly associative algebras: 

\begin{longtable}{ll llllllllllll}
${\mathcal C}_{01}$ &$:$& $e_1 e_1 = e_2$\\
\hline${\mathcal C}_{02}$ &$:$&  $e_1 e_1 = e_2$  & $e_2 e_2=e_3$  \\
\hline${\mathcal C}_{03}$ &$:$& $e_1 e_1 = e_2$ & $e_1 e_2=e_3$ & $e_2 e_1=  e_3$\\
\hline${\mathcal C}_{04}$  &$:$& $e_1 e_2=e_3$  & $e_2 e_1=e_3$  \\
\hline${\mathcal N}_{01}$  &$:$& $e_1 e_2=e_3$  & $e_2 e_1=-e_3$   \\
\hline${\mathcal N}^{\lambda}_{02}$  &$:$& $e_1 e_1 = \lambda e_3$   & $e_2 e_1=e_3$  & $e_2 e_2=e_3$\end{longtable}

%\subsection{Central extensions of 3-dimensional nilpotent weakly associative  algebras}

\subsubsection{The description of second cohomology spaces of $3$-dimensional nilpotent weakly associative algebras}\label{cohospace}

\
In the following table we give the description of the second cohomology space of  $3$-dimensional nilpotent weakly associative algebras.

\begin{longtable}{|l cr cl |  }
                \hline

${\rm H}^2_{\mathcal W}({\mathcal C}_{01})$& $=$& 
${\rm H}^2_{\mathcal{C}}({\mathcal C}_{01})$&$ \oplus$&$ \Big\langle [\Delta_{13}] \Big\rangle$ \\ && ${\rm H}^2_{\mathcal{C}}({\mathcal C}_{01}) $&$=$&$\Big\langle 
[\Delta_{12}+\Delta_{21}], [\Delta_{22}],[\Delta_{13}+\Delta_{31}], [\Delta_{23}+\Delta_{32}],  [\Delta_{33}]\Big\rangle$      \\ 

&& ${\rm H}^2_{\mathcal{S}}({\mathcal C}_{01}) $&$=$&$
\Big\langle 
[\Delta_{13}],[\Delta_{13}+\Delta_{31}],   [\Delta_{33}]\Big\rangle$      \\ \hline

${\rm H}^2_{\mathcal W}({\mathcal C}_{02})$& $=$& 
${\rm H}^2_{\mathcal{C}}({\mathcal C}_{02}) $&$=$&$\Big\langle 
[\Delta_{12}+\Delta_{21}],[\Delta_{13}+\Delta_{31}],   [\Delta_{32}+\Delta_{23}], [\Delta_{33}]
\Big\rangle$    \\ \hline

${\rm H}^2_{\mathcal W}({\mathcal C}_{03})$& $=$& 
${\rm H}^2_{\mathcal{C}}({\mathcal C}_{03}) $&$=$&$\Big\langle 
[\Delta_{22}], [\Delta_{13}+\Delta_{31}],  [\Delta_{32}+\Delta_{23}], [\Delta_{33}]
\Big\rangle$     \\ \hline

${\rm H}^2_{\mathcal W}({\mathcal C}_{04})$& $=$& 
${\rm H}^2_{\mathcal{C}}({\mathcal C}_{04}) $&$\oplus$&$ \Big\langle [\Delta_{12}]  \Big\rangle$ \\&&$ 
{\rm H}^2_{\mathcal{C}}({\mathcal C}_{04}) $&$=$&$\Big\langle
[\Delta_{11}], [\Delta_{22}],  [\Delta_{13}+\Delta_{31}],   [\Delta_{32}+\Delta_{23}], [\Delta_{33}]\Big\rangle$  

 \\&&$ 
{\rm H}^2_{\mathcal{S}}({\mathcal C}_{04}) $&$=$&$\Big\langle
[\Delta_{11}], [\Delta_{12}], [\Delta_{22}] \Big\rangle$

    \\ \hline

${\rm H}^2_{\mathcal W}({\mathcal N}_{01})$& $=$& 
${\rm H}^2_{\mathcal S}({\mathcal N}_{01}) $&$=$&$\Big\langle 
[\Delta_{11}], [\Delta_{12}], [\Delta_{22}],
[\Delta_{13}-\Delta_{31}],
[\Delta_{23}-\Delta_{32}]

\Big\rangle$   \\ \hline

${\rm H}^2_{\mathcal W}({\mathcal N}^{\lambda}_{02})$& $=$& 
${\rm H}^2_{\mathcal{S}}({\mathcal N}^{\lambda}_{02}) $& $=$&$ \Big\langle 
[\Delta_{11}], [\Delta_{12}], [\Delta_{21}]
\Big\rangle$ 
 
    \\ \hline

 \end{longtable}
% where ${\mathcal C}_{01}={\mathcal C}_{01}^{2}\oplus {\mathbb C}.$

\subsubsection{Central extensions of ${\mathcal C}_{01}$}
	Let us use the following notations:
\begin{longtable}{lll}
$\nabla_1 =[\Delta_{12}+\Delta_{21}],$& $\nabla_2 = [\Delta_{22}],$& $ \nabla_3 = [\Delta_{13}+\Delta_{31}],$\\
$\nabla_4 = [\Delta_{23}+\Delta_{32}],$& $\nabla_5 = [\Delta_{33}],$& $\nabla_6 =[\Delta_{13}].$
\end{longtable}
Take $\theta=\sum\limits_{i=1}^6\alpha_i\nabla_i\in {\rm H_{{\mathcal W}}^2}({\mathcal C}_{01}).$ 
	The automorphism group of ${\mathcal C}_{01}$ consists of invertible matrices of the form
\begin{center}$	\phi=
	\begin{pmatrix}
	x &    0  &  0\\
	z &  x^2  &  r\\
	t &  0  &  y \\
	\end{pmatrix}.$\end{center}
	Since
	$$
	\phi^T\begin{pmatrix}
	0 &  \alpha_1 & \alpha_3+\alpha_6\\
	\alpha_1  &  \alpha_2 & \alpha_4 \\
	\alpha_3&  \alpha_4    & \alpha_5\\
	\end{pmatrix} \phi=	\begin{pmatrix}
		\alpha^*   &  \alpha_1^* & \alpha_3^*+\alpha_6^*\\
	\alpha_1^*  & \alpha_2^* & \alpha_4^* \\
	\alpha_3^*&  \alpha_4^*    & \alpha_5^*
	\end{pmatrix},
	$$
	 we have that the action of ${\rm Aut} ({\mathcal C}_{01})$ on the subspace
$\langle \sum\limits_{i=1}^6\alpha_i\nabla_i  \rangle$
is given by
$\langle \sum\limits_{i=1}^6\alpha_i^{*}\nabla_i\rangle,$
where
\begin{longtable}{ll}
$\alpha^*_1=   ( \alpha_1x + \alpha_2z + \alpha_4t)x^2,$ &
$\alpha^*_2=   \alpha_2x^4,$ \\
$\alpha^*_3=  (\alpha_1x+ \alpha_2z+  \alpha_4t)r+ ( \alpha_3x+ \alpha_4z+ \alpha_5t)y,$ &
$\alpha_4^*=  ( \alpha_2r+ \alpha_4y)x^2,$\\
$\alpha_5^*= \alpha_2 r^2+2  \alpha_4 r y+ \alpha_5y^2,$&
$\alpha_6^*= \alpha_6 x y .$
\end{longtable}

%\noindent 
%{\bf $\bullet$ $1$-dimensional central extensions of ${\mathcal C}_{01}.$}
Since ${\rm H^2_{{\mathcal W}}({\mathcal C}_{01})} = {\rm H^2_{{\mathcal C}}({\mathcal C}_{01})}\oplus \langle \nabla_6\rangle$ 
and we are interested only in new algebras, we have $(\alpha_1,\alpha_2,\alpha_4)\neq 0$ and  $\alpha_6\neq0.$
Then 
\begin{enumerate}
    \item if $\alpha_2 \neq 0$ and $\alpha_4^2\neq \alpha_2 \alpha_5,$ then by choosing 
    $x=\frac{\alpha_6}{\sqrt{\alpha_2 \alpha_5-\alpha_4^2}},$
    $y=\frac{\alpha_2 \alpha_6^2}{\sqrt{(\alpha_2 \alpha_5-\alpha_4^2)^3}},$
    $z=\frac{(\alpha_3 \alpha_4-\alpha_1 \alpha_5) \alpha_6}{\sqrt{(\alpha_2 \alpha_5-\alpha_4^2)^3}},$
    $t=\frac{(\alpha_1 \alpha_4-\alpha_2 \alpha_3) \alpha_6}{\sqrt{(\alpha_2 \alpha_5-\alpha_4^2)^3}}$
    and 
    $r=-\frac{\alpha_4 \alpha_6^2}{\sqrt{(\alpha_2 \alpha_5-\alpha_4^2)^3}},$
    we have the representative $\langle \nabla_2+ \nabla_5+ \nabla_6 \rangle.$
    
   \item if $\alpha_2 \neq 0$ and $\alpha_4^2= \alpha_2 \alpha_5,$ then by choosing 
     $x=\alpha_6,$ 
     $y=\alpha_2 \alpha_6^2,$
     $z=-\frac{\alpha_1 \alpha_6}{\alpha_2},$
     $t=0$
     and 
     $r=-\alpha_4 \alpha_6^2,$
    we have the family of representatives    $\langle \nabla_2+ \alpha \nabla_3+ \nabla_6 \rangle.$   
    
    \item if $\alpha_2=0$ and $\alpha_4\neq 0,$ by choosing 
    $x=\frac{\alpha_6}{\alpha_4},$
    $y=2 \alpha_4,$
    $z=\frac{\alpha_1 \alpha_5 \alpha_6-\alpha_3 \alpha_4 \alpha_6}{\alpha_4^3},$
    $t=-\frac{\alpha_1 \alpha_6}{\alpha_4^2}$
    and 
    $r=-\alpha_5,$
    we have the representative   $\langle \nabla_4+ \nabla_6 \rangle.$   
    
    \item if $\alpha_2=\alpha_4=0$ and $\alpha_5\neq 0, \alpha_1\neq 0,$ by choosing 
    $x=\frac{\alpha_6^2}{\alpha_1 \alpha_5},$
    $y=\frac{\alpha_6^3}{\alpha_1 \alpha_5^2},$
    $z=0,$
    $t=-\frac{\alpha_3 \alpha_6^2}{\alpha_1 \alpha_5^2}$
    and $r=0,$ we have the representative  $\langle \nabla_1+\nabla_5+ \nabla_6 \rangle.$ 

%   \item if $\alpha_2=\alpha_4=\alpha_1=0$ and $\alpha_5\neq 0,$ by choosing 
%    $x=\alpha_5,$
%    $y=\alpha_6,$
%    $z=0,$
%%    $t=-\alpha_3$
 %   and $r=0,$ we have the representative  $\langle  \nabla_5+ \nabla_6 \rangle.$ 

  \item if $\alpha_2=\alpha_4=\alpha_5=0$ and $\alpha_1\neq 0,$ by choosing 
    $x=\alpha_6,$
    $y=\alpha_1\alpha_6,$
    $z=0,$
    $t=0$  
    and $r=-\alpha_3\alpha_6$, we have the representative  $\langle  \nabla_1+ \nabla_6 \rangle.$ 
 
% \item if $\alpha_2=\alpha_4=\alpha_5=\alpha_1=0,$  we have the family of representatives  $\langle \alpha \nabla_3+ \nabla_6 \rangle.$ 
\end{enumerate}
Summarizing, we have the following distinct orbits 
\begin{center} 
$\langle \nabla_1+\nabla_5+ \nabla_6 \rangle,$ \  
$\langle  \nabla_1+ \nabla_6 \rangle,$ \  
$\langle \nabla_2+ \alpha \nabla_3+ \nabla_6 \rangle,$ \\
$\langle \nabla_2+ \nabla_5+ \nabla_6 \rangle,$ \ 
$\langle \nabla_4+ \nabla_6 \rangle,$
\end{center}
which give the following new algebras:

 \begin{longtable}{lllllll}
$\mathcal{W}_{01}$ &$:$& $ e_1e_1=e_2$ & $e_1 e_2 =e_4$ & $e_1e_3=e_4$ & $e_2 e_1 =e_4$ & $e_3e_3 =e_4$\\
\hline
$\mathcal{W}_{02}$ &$:$& $ e_1e_1=e_2$ & $e_1 e_2 =e_4$ & $e_1e_3=e_4$ & $e_2 e_1 =e_4$ & \\
\hline
$\mathcal{W}_{03}^\alpha$ &$:$& $ e_1e_1=e_2$ & $e_1e_3=(\alpha+1)e_4$ & $e_2e_2=e_4$ & $e_3 e_1 =\alpha e_4$\\
\hline
$\mathcal{W}_{04}$ &$:$& $ e_1e_1=e_2$ & $e_1e_3=e_4$ & $e_2e_2=e_4$ & $e_3e_3 =e_4$\\
\hline
$\mathcal{W}_{05}$ &$:$& $ e_1e_1=e_2$ & $e_1e_3=e_4$ & $e_2e_3=e_4$ & $e_3e_2 =e_4$\\

    \end{longtable}

\subsubsection{Central extensions of ${\mathcal C}_{04}$}
	Let us use the following notations:
\begin{longtable}{lll}
$\nabla_1 =[\Delta_{11}],$& $\nabla_2 = [\Delta_{22}],$& $ \nabla_3 = [\Delta_{13}+\Delta_{31}],$\\
$\nabla_4 = [\Delta_{23}+\Delta_{32}],$& $\nabla_5 = [\Delta_{33}],$& $\nabla_6 =[\Delta_{12}].$
\end{longtable}
Take $\theta=\sum\limits_{i=1}^6\alpha_i\nabla_i\in {\rm H_{{\mathcal W}}^2}({\mathcal C}_{04}).$ 
	The automorphism group of ${\mathcal C}_{04}$ consists of invertible matrices of the form
\begin{center}	
$	\phi_1=
	\begin{pmatrix}
	x &    0  &  0\\
	0 &  y  &  0\\
	z &  w  &  xy \\
	\end{pmatrix}$ and 
$	\phi_2=
	\begin{pmatrix}
	0 &    x  &  0\\
	y &  0  &  0\\
	z &  w  &  xy \\
	\end{pmatrix}$	
	
	\end{center}
	Since
	$$
	\phi_1^T\begin{pmatrix}
	\alpha_1 &  \alpha_6 & \alpha_3 \\
	 0   &  \alpha_2 & \alpha_4 \\
	\alpha_3&  \alpha_4    & \alpha_5\\
	\end{pmatrix} \phi_1=	\begin{pmatrix}
		\alpha_1^*   &  \alpha^*+\alpha_6^* & \alpha_3^* \\
	\alpha^*  & \alpha_2^* & \alpha_4^* \\
	\alpha_3^*&  \alpha_4^*    & \alpha_5^*
	\end{pmatrix},
	$$
	 we have that the action of ${\rm Aut} ({\mathcal C}_{04})$ on the subspace
$\langle \sum\limits_{i=1}^6\alpha_i\nabla_i  \rangle$
is given by
$\langle \sum\limits_{i=1}^6\alpha_i^{*}\nabla_i\rangle,$
where
\begin{longtable}{ll}
$\alpha^*_1=  \alpha_1x^2 +2  \alpha_3x z + \alpha_5z^2,$ &
$\alpha^*_2=  \alpha_2y^2 +2  \alpha_4w y + \alpha_5w^2,$ \\
$\alpha^*_3= (\alpha_3x+ \alpha_5z)x y, $ &
$\alpha_4^*= (\alpha_4y+ \alpha_5w)x y,$\\
$\alpha_5^*= \alpha_5 x^2 y^2,$&
$\alpha_6^*= \alpha_6 x y. $
\end{longtable}

%\noindent 
%{\bf $\bullet$ $1$-dimensional central extensions of ${\mathcal C}_{04}.$}
Since ${\rm H^2_{{\mathcal W}}({\mathcal C}_{04})} = {\rm H^2_{{\mathcal C}}({\mathcal C}_{04})}\oplus \langle \nabla_6\rangle$ 
and we are interested only in new algebras, we have $(\alpha_3,\alpha_4,\alpha_5)\neq 0$ and   $\alpha_6\neq0.$
Then 
\begin{enumerate}
\item if $\alpha_5\neq 0$ and $\alpha_4^2 \neq \alpha_2 \alpha_5,$ by choosing $x=\frac{\sqrt{\alpha_2 \alpha_5-\alpha_4^2}}{\alpha_5},$
$y=\frac{\alpha_6}{\sqrt{\alpha_2 \alpha_5-\alpha_4^2}},$
$z=-\frac{\alpha_3 \sqrt{\alpha_2 \alpha_5-\alpha_4^2}}{\alpha_5^2}$
and 
$w=-\frac{\alpha_4 \alpha_6}{\alpha_5 \sqrt{\alpha_2 \alpha_5-\alpha_4^2}},$
we have the family of representatives 
 $\langle \alpha \nabla_1+ \nabla_2+ \nabla_5+ \nabla_6 \rangle.$ 

\item if $\alpha_5\neq 0, \alpha_4^2 = \alpha_2 \alpha_5$ and $\alpha_3^2\neq \alpha_1 \alpha_5,$ by choosing 
$x=\frac{\alpha_6}{\sqrt{\alpha_1 \alpha_5-\alpha_3^2}},$
$y=\frac{\sqrt{\alpha_1 \alpha_5-\alpha_3^2}}{\alpha_5},$
$z=-\frac{\alpha_3 \alpha_6}{\alpha_5 \sqrt{\alpha_1 \alpha_5-\alpha_3^2}}$
and 
$w=-\frac{\alpha_4 \sqrt{\alpha_1 \alpha_5-\alpha_3^2}}{\alpha_5^2},$
we have the    representative  
 $\langle  \nabla_1+ \nabla_5+ \nabla_6 \rangle.$ 

\item if $\alpha_5\neq 0, \alpha_4^2 = \alpha_2 \alpha_5$ and $\alpha_3^2= \alpha_1 \alpha_5,$ by choosing 
$x= \alpha_5,$
$y=\frac{\alpha_6}{\alpha_5^2},$
$z=-\alpha_3$
and 
$w=-\frac{\alpha_4 \alpha_6}{\alpha_5^3},$
we have the    representative  
 $\langle   \nabla_5+ \nabla_6 \rangle.$ 

\item if $\alpha_5= 0$ and $\alpha_4\neq0,  \alpha_3 \neq 0,$ by choosing 
$x= \frac{\alpha_6}{\alpha_3},$
$y= \frac{\alpha_6}{\alpha_4},$
$z= -\frac{\alpha_1 \alpha_6}{2 \alpha_3^2}$
and 
$w= -\frac{\alpha_2 \alpha_6}{2 \alpha_4^2},$
we have the    representative  
 $\langle   \nabla_3+\nabla_4+ \nabla_6 \rangle.$

\item if $\alpha_5=\alpha_4= 0$ and $\alpha_3\neq0, \alpha_2\neq 0,$ by choosing 
$x= \frac{\alpha_6}{\alpha_3},$
$y= \frac{\alpha_6^2}{\alpha_2 \alpha_3},$
$z= -\frac{\alpha_1 \alpha_6}{2 \alpha_3^2}$
and 
$w= 0,$
we have the    representative  
 $\langle   \nabla_2+\nabla_3+ \nabla_6 \rangle.$

\item if $\alpha_5=\alpha_4=\alpha_2= 0$ and $\alpha_3\neq0,$ by choosing 
$x= \frac{\alpha_6}{\alpha_3},$
$y= 1,$
$z= -\frac{\alpha_1 \alpha_6}{2 \alpha_3^2}$
and 
$w= 0,$
we have the     representative  
 $\langle   \nabla_3+ \nabla_6 \rangle.$ 
 
\item if $\alpha_5=\alpha_3= 0$ and $\alpha_4\neq0,$ by taking $\phi_2$
we have   $ \alpha_3^*\neq 0.$ The case is considered above.  
  
% \item if $\alpha_5=\alpha_4=\alpha_3= 0$ and $ \alpha_2\neq 0,$
%  by choosing 
%$x= \alpha_2,$
%$y= \alpha_6,$
%$z=0$
%and 
%$w= 0,$
%we have the family of representatives 
% $\langle   \alpha \nabla_1+ \nabla_2+ \nabla_6 \rangle.$ 
  
% \item if $\alpha_5=\alpha_4=\alpha_3=\alpha_2= 0$ and $\alpha_1\neq0,$ by taking $\phi_2$
%we have   $ \alpha_2^*\neq 0.$ The case is considered above.  
 
 % \item if $\alpha_5=\alpha_4=\alpha_3=\alpha_2=\alpha_1= 0,$ 
%we have   the representative $\langle     \nabla_6 \rangle.$
 
\end{enumerate}
Summarizing, we have the following distinct orbits 
\begin{center}
 $\langle \alpha \nabla_1+ \nabla_2+ \nabla_5+ \nabla_6 \rangle,$ \ 
  $\langle  \nabla_1+ \nabla_5+ \nabla_6 \rangle,$ \ 
     $\langle   \nabla_2+\nabla_3+ \nabla_6 \rangle,$ \\ 
    $\langle   \nabla_3+\nabla_4+ \nabla_6 \rangle,$ \ 
      $\langle   \nabla_3+ \nabla_6 \rangle,$ \  
   $\langle   \nabla_5+ \nabla_6 \rangle,$ 
\end{center}      
which give the following new algebras:

 \begin{longtable}{llllllll}
$\mathcal{W}_{06}^{\alpha}$ &$:$& $ e_1e_1=\alpha e_4$ & $e_1 e_2 =e_3+e_4$ & $e_2 e_1 =e_3$ & $e_2e_2=e_4$ & $e_3e_3 =e_4$\\
\hline
$\mathcal{W}_{07}$ &$:$& $ e_1e_1=e_4$ & $e_1 e_2 =e_3+e_4$ & $e_2 e_1 =e_3$ & $e_3e_3 =e_4$\\
\hline
$\mathcal{W}_{08}$ &$:$& $e_1 e_2 =e_3+e_4$ & $e_1e_3=e_4$ & $e_2 e_1 =e_3$ & $e_2e_2=e_4$ & $e_3e_1=e_4$\\
\hline
$\mathcal{W}_{09}$ &$:$& $e_1 e_2 =e_3+e_4$ & $e_1e_3=e_4$ & $e_2 e_1 =e_3$ & $e_2e_3=e_4$ & $e_3e_1=e_4$ & $e_3e_2=e_4$\\
\hline
$\mathcal{W}_{10}$ &$:$& $e_1 e_2 =e_3+e_4$ & $e_1e_3=e_4$ & $e_2 e_1 =e_3$ & $e_3e_1=e_4$ \\
\hline
$\mathcal{W}_{11}$ &$:$& $e_1 e_2 =e_3+e_4$ & $e_2 e_1 =e_3$ & $e_3e_3 =e_4$\\

    \end{longtable}

\subsubsection{Central extensions of ${\mathcal N}_{01}$}\label{n01-orb}
	Let us use the following notations:
\begin{longtable}{lll}
$\nabla_1 =[\Delta_{11}],$& $\nabla_2 = [\Delta_{12}],$& $ \nabla_3 = [\Delta_{22}],$\\
$\nabla_4 = [\Delta_{13}-\Delta_{31}],$& $\nabla_5 = [\Delta_{23}-\Delta_{32}].$ 
\end{longtable}
Take $\theta=\sum\limits_{i=1}^5\alpha_i\nabla_i\in {\rm H_{{\mathcal W}}^2}({\mathcal N}_{01}).$ 
	The automorphism group of ${\mathcal N}_{01}$ consists of invertible matrices of the form
\begin{center}	
$	\phi=
	\begin{pmatrix}
	x &    y  &  0\\
	z &  w  &  0\\
	t &  r  &  xw-zy \\
	\end{pmatrix}$  
	
	\end{center}
	Since
	$$
	\phi^T\begin{pmatrix}
	\alpha_1 &  \alpha_2 & \alpha_4 \\
	0  &  \alpha_3 & \alpha_5 \\
	-\alpha_4&  -\alpha_5    & 0\\
	\end{pmatrix} \phi=	\begin{pmatrix}
		\alpha_1^*   &  -\alpha^*+\alpha_2^* & \alpha_4^* \\
	\alpha^*  & \alpha_3^* & \alpha_5^* \\
	-\alpha_4^*&  -\alpha_5^*    & 0
	\end{pmatrix},
	$$
	 we have that the action of ${\rm Aut} ({\mathcal N}_{01})$ on the subspace
$\langle \sum\limits_{i=1}^5\alpha_i\nabla_i  \rangle$
is given by
$\langle \sum\limits_{i=1}^5\alpha_i^{*}\nabla_i\rangle,$
where
\begin{longtable}{ll}
$\alpha^*_1=   \alpha_1x^2 + \alpha_2 x z+ \alpha_3z^2,$ &
$\alpha^*_2= 2  \alpha_1x y + \alpha_2w x + \alpha_2y z+2 \alpha_3 w z,$ \\
$\alpha^*_3=  \alpha_1y^2+ \alpha_2w y+ \alpha_3w^2,$ &
$\alpha_4^*=  (w x-y z) ( \alpha_4x + \alpha_5z),$\\&
$\alpha_5^*=(w x-y z) (  \alpha_4 y + \alpha_5w).$ 
\end{longtable}

%\noindent 
%{\bf $\bullet$ $1$-dimensional central extensions of ${\mathcal N}_{01}.$}
Since, we are interested only in new algebras, we have  $(\alpha_4, \alpha_5)\neq0$
and we can suppose that $\alpha_4=0$ and $\alpha_5\neq0.$
Then 
\begin{enumerate}
\item if $\alpha_1 \neq 0$ and $\alpha_2^2\neq4 \alpha_1 \alpha_3,$ by choosing 
$x=\frac{4 \alpha_1 \alpha_3-\alpha_2^2}{4 \alpha_1 \alpha_5},$
$y=-\frac{\alpha_2 \sqrt{4 \alpha_1 \alpha_3-\alpha_2^2}}{4 \alpha_1 \alpha_5},$
$z=0,$
$w=\frac{\sqrt{4 \alpha_1 \alpha_3-\alpha_2^2}}{2 \alpha_5},$
$r=0$
and
$t=0,$
we have the representative 
 $\langle     \nabla_1+\nabla_3+\nabla_5 \rangle.$

\item if $\alpha_1 \neq 0$ and $\alpha_2^2= 4\alpha_1 \alpha_3,$ by choosing 
$x= 4 \alpha_1 \alpha_5,$
$y=-\alpha_2,$
$z=0,$
$w=2 \alpha_1,$
$r=0$
and
$t=0,$
we have the representative 
 $\langle     \nabla_1+  \nabla_5 \rangle.$

 \item if $\alpha_1 = 0$ and $\alpha_2\neq 0,$ by choosing 
$x=1,$
$y=-\frac{\alpha_3}{\alpha_5},$
$z=0,$
$w=\frac{\alpha_2}{\alpha_5},$
$r=0$
and
$t=0,$
we have the representative 
 $\langle     \nabla_2+  \nabla_5 \rangle.$

  \item if $\alpha_1= \alpha_2= 0$ and $\alpha_3\neq  0,$ by choosing 
$x=\frac{\alpha_3}{\alpha_5},$
$y=0,$
$z=0,$
$w=1,$
$r=0$
and
$t=0,$
we have the representative 
 $\langle     \nabla_3+  \nabla_5 \rangle.$
 
   \item if $\alpha_1= \alpha_2= \alpha_3=  0,$
we have the representative 
 $\langle   \nabla_5 \rangle.$

 \end{enumerate}

 Summarizing, we have the following distinct orbits 
\begin{center}
  $\langle     \nabla_1+\nabla_3+\nabla_5 \rangle,$ \ 
  $\langle     \nabla_1+  \nabla_5 \rangle,$ \ 
   $\langle     \nabla_2+  \nabla_5 \rangle,$ \
  $\langle     \nabla_3+  \nabla_5 \rangle,$ \
    $\langle   \nabla_5 \rangle,$
 \end{center}      
which give the following new algebras:
 
 \begin{longtable}{llllllll}
$\mathcal{S}_{01}$ &$:$& $ e_1e_1=e_4$ & $e_1 e_2 =e_3$ & $e_2 e_1 =-e_3$ & $e_2e_2=e_4$ & $e_2e_3 =e_4$ & $e_3e_2=-e_4$\\
\hline
$\mathcal{S}_{02}$ &$:$& $ e_1e_1=e_4$ & $e_1 e_2 =e_3$ & $e_2 e_1 =-e_3$ & $e_2e_3 =e_4$ & $e_3e_2=-e_4$\\
\hline
$\mathcal{S}_{03}$ &$:$& $e_1 e_2 =e_3+e_4$ & $e_2 e_1 =-e_3$ & $e_2e_3 =e_4$ & $e_3e_2=-e_4$\\
\hline
$\mathcal{S}_{04}$ &$:$& $e_1 e_2 =e_3$ & $e_2 e_1 =-e_3$ & $e_2e_2=e_4$ & $e_2e_3 =e_4$ & $e_3e_2=-e_4$\\
\hline
$\mathcal{S}_{05}$ &$:$& $e_1 e_2 =e_3$ & $e_2 e_1 =-e_3$ & $e_2e_3 =e_4$ & $e_3e_2=-e_4$\\
    \end{longtable}

\subsubsection{Classification theorems for $4$-dimensional algebras}\label{secteoA}
Now we are ready summarize all results related to the algebraic classification of complex $4$-dimensional nilpotent weakly associative and symmetric Leibniz algebras.

\begin{theoremA}\label{teorA}
Let ${\mathcal S}$ be a complex  $4$-dimensional nilpotent symmetric Leibniz algebra.
Then ${\mathcal S}$ is a $2$-step nilpotent algebra, described in \cite{kppv}; 
or it is isomorphic to one algebra from the following list:

\begin{center}
 
\begin{longtable}{llllllll}
$\mathcal{S}_{01}$ &$:$& $ e_1e_1=e_4$ & $e_1 e_2 =e_3$ & $e_2 e_1 =-e_3$ & $e_2e_2=e_4$ & $e_2e_3 =e_4$ & $e_3e_2=-e_4$\\
\hline
$\mathcal{S}_{02}$ &$:$& $ e_1e_1=e_4$ & $e_1 e_2 =e_3$ & $e_2 e_1 =-e_3$ & $e_2e_3 =e_4$ & $e_3e_2=-e_4$\\
\hline
$\mathcal{S}_{03}$ &$:$& $e_1 e_2 =e_3+e_4$ & $e_2 e_1 =-e_3$ & $e_2e_3 =e_4$ & $e_3e_2=-e_4$\\
\hline
$\mathcal{S}_{04}$ &$:$& $e_1 e_2 =e_3$ & $e_2 e_1 =-e_3$ & $e_2e_2=e_4$ & $e_2e_3 =e_4$ & $e_3e_2=-e_4$\\
\hline
$\mathcal{S}_{05}$ &$:$& $e_1 e_2 =e_3$ & $e_2 e_1 =-e_3$ & $e_2e_3 =e_4$ & $e_3e_2=-e_4$\\
    \end{longtable}
\end{center}

Let ${\mathcal W}$ be a complex  $4$-dimensional nilpotent weakly associative  algebra.
Then ${\mathcal W}$ is a commutative algebra, described in \cite{fkkv}; 
or a non-commutative  symmetric Leibniz algebra; 
or it is isomorphic to one algebra from the following list:

\begin{center}

\begin{longtable}{llllllll}
$\mathcal{W}_{01}$ &$:$& $ e_1e_1=e_2$ & $e_1 e_2 =e_4$ & $e_1e_3=e_4$ & $e_2 e_1 =e_4$ & $e_3e_3 =e_4$\\
\hline
$\mathcal{W}_{02}$ &$:$& $ e_1e_1=e_2$ & $e_1 e_2 =e_4$ & $e_1e_3=e_4$ & $e_2 e_1 =e_4$ & \\
\hline
$\mathcal{W}_{03}^\alpha$ &$:$& $ e_1e_1=e_2$ & $e_1e_3=(\alpha+1)e_4$ & $e_2e_2=e_4$ & $e_3 e_1 =\alpha e_4$\\
\hline
$\mathcal{W}_{04}$ &$:$& $ e_1e_1=e_2$ & $e_1e_3=e_4$ & $e_2e_2=e_4$ & $e_3e_3 =e_4$\\
\hline
$\mathcal{W}_{05}$ &$:$& $ e_1e_1=e_2$ & $e_1e_3=e_4$ & $e_2e_3=e_4$ & $e_3e_2 =e_4$\\
\hline
$\mathcal{W}_{06}^{\alpha}$ &$:$& $ e_1e_1=\alpha e_4$ & $e_1 e_2 =e_3+e_4$ & $e_2 e_1 =e_3$ & $e_2e_2=e_4$ & $e_3e_3 =e_4$\\
\hline
$\mathcal{W}_{07}$ &$:$& $ e_1e_1=e_4$ & $e_1 e_2 =e_3+e_4$ & $e_2 e_1 =e_3$ & $e_3e_3 =e_4$\\
\hline
$\mathcal{W}_{08}$ &$:$& $e_1 e_2 =e_3+e_4$ & $e_1e_3=e_4$ & $e_2 e_1 =e_3$ & $e_2e_2=e_4$ & $e_3e_1=e_4$\\
\hline
$\mathcal{W}_{09}$ &$:$& $e_1 e_2 =e_3+e_4$ & $e_1e_3=e_4$ & $e_2 e_1 =e_3$ & $e_2e_3=e_4$ & $e_3e_1=e_4$ & $e_3e_2=e_4$\\
\hline
$\mathcal{W}_{10}$ &$:$& $e_1 e_2 =e_3+e_4$ & $e_1e_3=e_4$ & $e_2 e_1 =e_3$ & $e_3e_1=e_4$ \\
\hline
$\mathcal{W}_{11}$ &$:$& $e_1 e_2 =e_3+e_4$ & $e_2 e_1 =e_3$ & $e_3e_3 =e_4$\\

    \end{longtable}
\end{center}

\end{theoremA}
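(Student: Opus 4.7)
My plan is to finish off the classification by running the Skjelbred--Sund procedure of Section 1.1 on every three-dimensional nilpotent weakly associative algebra in the list $\{\mathcal{C}_{01},\mathcal{C}_{02},\mathcal{C}_{03},\mathcal{C}_{04},\mathcal{N}_{01},\mathcal{N}_{02}^{\lambda}\}$, using the cohomology data assembled in \S\ref{cohospace}. The algebras in the statement should then appear as complete lists of orbit representatives, with the $2$-step nilpotent central extensions coming from \cite{kppv} discarded.

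\textbf{Extensions of the commutative parents.}
For $\mathcal{C}_{02}$ and $\mathcal{C}_{03}$ the table in \S\ref{cohospace} gives $\mathrm{H}^2_{\mathcal W}=\mathrm{H}^2_{\mathcal C}$, so $U_1(\mathcal{C}_{02})=U_1(\mathcal{C}_{03})=\emptyset$ and no new non-commutative weakly associative $4$-dimensional algebras arise from these two parents. For $\mathcal{C}_{01}$ and $\mathcal{C}_{04}$ the extra generators $\nabla_6=[\Delta_{13}]$ and $\nabla_6=[\Delta_{12}]$ respectively cut out the non-commutative part; the two orbit computations already carried out above give the families $\mathcal{W}_{01}$--$\mathcal{W}_{05}$ and $\mathcal{W}_{06}^{\alpha}$--$\mathcal{W}_{11}$. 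The step I still need to verify here is that running the procedure for the remaining commutative parents really produces no extension outside the $2$-step nilpotent / commutative class: this is immediate from the $\alpha_6\ne 0$ requirement together with the equality $\mathrm{H}^2_{\mathcal W}=\mathrm{H}^2_{\mathcal C}$ for those parents.

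\textbf{Extensions of the non-commutative parents.}
For $\mathcal{N}_{01}$ the calculation in \S\ref{n01-orb} produces the five orbits $\langle\nabla_1+\nabla_3+\nabla_5\rangle$, $\langle\nabla_1+\nabla_5\rangle$, $\langle\nabla_2+\nabla_5\rangle$, $\langle\nabla_3+\nabla_5\rangle$, $\langle\nabla_5\rangle$, which give $\mathcal{S}_{01}$--$\mathcal{S}_{05}$; since $\mathrm{H}^2_{\mathcal W}(\mathcal{N}_{01})=\mathrm{H}^2_{\mathcal S}(\mathcal{N}_{01})$ these are simultaneously symmetric Leibniz and weakly associative, and they exhaust both the symmetric Leibniz and the weakly associative non-commutative extensions of this parent. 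For $\mathcal{N}_{02}^{\lambda}$ the cohomology is $\langle[\Delta_{11}],[\Delta_{12}],[\Delta_{21}]\rangle$, and the analogous orbit analysis under $\operatorname{Aut}(\mathcal{N}_{02}^{\lambda})$ has to be run, parametrized by $\lambda$; I expect every resulting extension either to be $2$-step nilpotent or to reproduce an algebra already listed from an earlier parent (so no new names are needed), but checking this exhaustively, including the exceptional values of $\lambda$, is the bulk of the remaining bookkeeping.

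\textbf{Isomorphism verification and main obstacle.}
The last step is to confirm that the algebras obtained from different parents yield pairwise non-isomorphic $4$-dimensional algebras, and that no algebra in the final list is split. For this I would use the standard discrete invariants (dimensions of $\operatorname{Ann}(\mathbf{A})$, $\mathbf{A}^2$, $\mathbf{A}^3$, $\mathbf{A}\operatorname{Ann}(\mathbf{A})+\operatorname{Ann}(\mathbf{A})\mathbf{A}$, the commutator ideal, and whether the symmetric Leibniz identities hold) to separate the $\mathcal{W}_i$ from the $\mathcal{S}_j$ and from one another, and internally for the families $\mathcal{W}_{03}^{\alpha}$ and $\mathcal{W}_{06}^{\alpha}$ derive the equivalence relation on the parameter $\alpha$ coming from the residual $\operatorname{Aut}$-action. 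The main obstacle I anticipate is not a single deep step but the completeness of the case split in the orbit analyses: every orbit calculation branches on whether discriminants such as $\alpha_2\alpha_5-\alpha_4^2$ or $\alpha_4^2-4\alpha_1\alpha_3$ vanish, and missing a degenerate subcase would produce a spurious missing algebra or a spurious duplicate, so the verification is essentially an exhaustive but delicate bookkeeping task rather than a conceptual hurdle.
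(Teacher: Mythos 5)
Your plan is the paper's own route: the theorem is just the summary of the Skjelbred--Sund orbit computations for the parents $\mathcal{C}_{01}$, $\mathcal{C}_{04}$ and $\mathcal{N}_{01}$, with the remaining parents dismissed via the cohomology table, and the symmetric Leibniz half reduced to $\mathcal{N}_{01}$ because a symmetric Leibniz algebra modulo its annihilator is a Lie algebra and a commutative symmetric Leibniz algebra is $2$-step nilpotent.

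The one step you leave open --- the parent $\mathcal{N}_{02}^{\lambda}$, which you describe as ``the bulk of the remaining bookkeeping'' --- is in fact immediate and requires no orbit analysis at all. Here $\operatorname{Ann}(\mathcal{N}_{02}^{\lambda})=\langle e_3\rangle$, while ${\rm H}^2_{\mathcal W}(\mathcal{N}_{02}^{\lambda})=\big\langle [\Delta_{11}],[\Delta_{12}],[\Delta_{21}]\big\rangle$ is supported entirely on the indices $1,2$; hence $e_3\in\operatorname{Ann}(\theta)$ for every cocycle $\theta$, so $\operatorname{Ann}(\theta)\cap\operatorname{Ann}(\mathcal{N}_{02}^{\lambda})\neq 0$ and ${\bf T}_{1}(\mathcal{N}_{02}^{\lambda})=\emptyset$: every central extension is split, for every $\lambda$. (The same one-line argument, applied to ${\rm H}^2_{\mathcal S}(\mathcal{C}_{01})$ and ${\rm H}^2_{\mathcal S}(\mathcal{C}_{04})$, confirms that no non-split symmetric Leibniz algebra arises from the commutative parents, which is what forces the $\mathcal{S}_j$ to come only from $\mathcal{N}_{01}$.) With that observation your case analysis is complete and matches the paper's.
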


The list of complex $4$-dimensional (non-nilpotent, non-Lie, one sided) Leibniz algebras is given in \cite{ikv17}.
After a carefully verification, we can obtain the following list of symmetric Leibniz algebras.

\begin{theoremB}\label{teorB}
Let ${\bf S}$ be a complex  $4$-dimensional  symmetric Leibniz (non-nilpotent, non-Lie) algebra.
Then ${\bf S}$   is isomorphic to one algebra from the following list:

\begin{center} 
\begin{longtable}{llllllllllll} 

$\mathfrak{L}_{02}$ &$:$ & $
  e_1e_1=e_4$ & $e_1e_2=-e_2$ & $e_1e_3=e_3$ & $e_2e_1=e_2$ \\
  && $e_2e_3=e_4$ &  $e_3e_1=-e_3$ &  $e_3e_2=-e_4$\\ 
\hline

$\mathfrak{L}_{14}$ &$:$ & $ 
e_1e_3=-e_3$& $e_2e_2=e_4$ & $e_3e_1=e_3$ \\ 
\hline

$\mathfrak{L}_{15}^{\alpha}$ &$:$ & $  
e_1e_1=\alpha e_4$ & $e_1e_2=e_4$ & $e_1e_3=-e_3$ &  $e_2e_2=e_4$ &   $e_3e_1=e_3$\\ 
\hline

$\mathfrak{L}_{16} $ &$:$ & $ 
e_1e_1=-2e_4$ &  $e_1e_3=-e_3$ &   $e_2e_2=e_4$ & $e_3e_1=e_3$\\ 
\hline

$\mathfrak{L}_{24}^{\alpha}$ &$:$ & $ 
e_1e_1=e_4$ & $e_1e_2= - e_2$ &  $ e_1e_3= -\alpha e_3$ & $e_2e_1=e_2$ & $e_3e_1= \alpha e_3$\\ 
\hline

$\mathfrak{L}_{27} $ &$:$ & $ 
e_1e_2= -e_2$ &  $e_1e_3= e_4$ & $e_2e_1=e_2$ \\ 
\hline

$\mathfrak{L}_{30}$ &$:$ & $ 
e_1e_1 =e_4$& $e_1e_2=-e_2-e_3$ & $e_1e_3=- e_3$ &  $e_2e_1=e_2+e_3$ & $e_3e_1= e_3$\\ 
\hline

$\mathfrak{L}_{35}^{\alpha \neq-1}$ &$:$ & $ 
e_1e_2=\alpha e_3$ & $e_1e_4=- e_4$ & $e_2e_1=e_3$ & $e_4e_1= e_4$\\ 
\hline

$\mathfrak{L}_{36}$ &$:$ & $
e_1e_1 =e_3$&  $e_1e_2=-e_3$ & $e_1e_4=- e_4$ & $e_2e_1=e_3$ & $e_4e_1= e_4$     \\ 
\hline

\end{longtable}
\end{center}

\end{theoremB}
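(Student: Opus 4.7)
The plan is to obtain Theorem B not by running the central extension machinery again (which would be extremely laborious in the non-nilpotent setting), but by extracting the symmetric Leibniz algebras from the already-completed classification of all complex $4$-dimensional (one-sided) Leibniz algebras given in \cite{ikv17}. Since a symmetric Leibniz algebra is, by definition, one that is simultaneously left and right Leibniz, every symmetric Leibniz algebra appears in that list; conversely, any algebra on that list which happens to satisfy the \emph{other} Leibniz identity as well must be symmetric.

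The concrete steps are as follows. First, I would recall from \cite{ikv17} the full list of isomorphism classes of complex $4$-dimensional Leibniz algebras (excluding the Lie and nilpotent ones, which are handled elsewhere, in particular by Theorem A for the nilpotent symmetric case). Second, for each algebra $\mathfrak{L}$ in that list, presented by its multiplication table on a fixed basis, I would test the two identities
\begin{center}
$x(yz)=(xy)z+y(xz)$ \quad and \quad $(xy)z=(xz)y+x(yz)$
\end{center}
on all basis triples $(e_i,e_j,e_k)$. Since \cite{ikv17} already guarantees one of the two identities for each $\mathfrak{L}$, only the remaining identity actually needs to be checked, and this reduces to a finite list of linear equations in the structure constants (and, where present, in the parameter $\alpha$). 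Third, for the surviving parameter families I would solve those equations to determine the precise parameter values (or restrictions such as $\alpha\neq -1$ in $\mathfrak{L}_{35}^{\alpha\neq -1}$) for which both identities simultaneously hold.

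Fourth, once the symmetric candidates are isolated, I would verify that no two entries on the resulting list are isomorphic. Here I would rely on the invariants already computed in \cite{ikv17} — dimension of the annihilator, of the square $\mathbf{A}^2$, of the left/right centers, and the Jordan form of suitable multiplication operators — supplemented, when a parameter family collapses under the symmetric restriction, by a direct check using the general automorphism that sends one normal form to another. Finally, I would discard the $2$-step nilpotent algebras (which are not symmetric Leibniz-only, since they belong to the intersection noted in the second remark following the method) and the algebras that turn out to be Lie, leaving exactly the nine entries listed.

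The genuinely delicate step is the third one: the families $\mathfrak{L}_{15}^{\alpha}$, $\mathfrak{L}_{24}^{\alpha}$, and $\mathfrak{L}_{35}^{\alpha\neq -1}$ survive as one-parameter families, and one must ensure both that the symmetric Leibniz identity does \emph{not} impose a hidden restriction on $\alpha$ beyond the one stated (e.g.\ the excluded value $\alpha=-1$ for $\mathfrak{L}_{35}$, where the algebra becomes isomorphic to another on the list) and that no redundancies creep in between a member of a family and one of the six isolated algebras. All remaining verifications are routine identity-checking on low-dimensional multiplication tables.
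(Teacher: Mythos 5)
Your proposal is correct and is essentially the paper's own argument: the paper obtains Theorem B precisely by taking the list of complex $4$-dimensional (non-nilpotent, non-Lie) one-sided Leibniz algebras from \cite{ikv17} and verifying case by case which entries also satisfy the second Leibniz identity. Since the source list already consists of pairwise non-isomorphic algebras, your extra isomorphism checks are harmless but not needed beyond tracking the parameter restrictions you correctly single out as the delicate point.
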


\subsection{The algebraic classification of symmetric Leibniz algebras}
The algebraic classification of complex $5$-dimensional nilpotent symmetric Leibniz algebras consists of two parts:
\begin{enumerate}
    \item $5$-dimensional algebras with identity $xyz=0$ (also known as $2$-step nilpotent algebras) is the intersection of all varieties of algebras defined by a family of polynomial identities   of degree three or more; for example, it is in intersection of associative, Zinbiel, Leibniz, etc, algebras. All these algebras can be obtained as central extensions of zero-product algebras. The geometric classification of $2$-step nilpotent algebras is given in \cite{ikp20}. It is the reason why we are not interested in it.
    
    \item $5$-dimensional nilpotent symmetric Leibniz (non-$2$-step nilpotent) algebras, which are central extensions of nilpotent symmetric Leibniz algebras with non-zero product of a smaller dimension.
  \end{enumerate}
   We will give a complete classification of complex $5$-dimensional nilpotent non-split  symmetric Leibniz algebras. By  \cite[Theorem 2.2]{saidsuper}, we must only consider central extensions of Lie algebras by a symmetric Leibniz cocycle.  Thanks to the section \ref{cohospace}, 
   the present classification is based on the calculation of $2$-dimensional central extensions of the algebra ${\mathcal N}_{01},$ given in the section \ref{N2-2dim}, and the calculation of $1$-dimensional central extensions of $4$-dimensional symmetric Leibniz algebras, listed in the section \ref{coho-4dim}.

\subsubsection{ 
 $2$-dimensional central extensions of ${\mathcal N}_{01}$}
 \label{N2-2dim}
Let $\xi_1$ and $\xi_2$ be two generators of our $2$-dimensional cohomology subspace.
Thanks to the section \ref{n01-orb}, we can suppose that 
\begin{center}$\xi_1 \in \left\{   \langle     \nabla_1+\nabla_3+\nabla_5 \rangle,
\langle     \nabla_1+  \nabla_5 \rangle,
\langle     \nabla_2+  \nabla_5 \rangle, 
\langle     \nabla_3+  \nabla_5 \rangle,
    \langle   \nabla_5 \rangle \right\}$
    and $\xi_2 =\sum\limits_{i=1}^4 \beta_i \nabla_i,$ $\beta_4\in \{0,1\}.$
    \end{center}
Below, we will consider $10$ cases, related  to these restrictions. For some cases we will prove that they are reduced to other cases, and for the rest of cases, we will find all non-isomorphic representatives.

\begin{enumerate}
    \item $\xi_1=\nabla_1+\nabla_3+\nabla_5,$ $\xi_2=\sum\limits_{i=1}^3\beta_i\nabla_i+\nabla_4.$

\begin{enumerate}
\item if $\beta_1 \neq \frac{4 \beta_2 - 4 \beta_2^2 + \beta_2^3 - 8 \beta_3^2}{
 4 (\beta_2-2) \beta_3}, \beta_3\neq 0, \beta_2 \neq 2,$ by choosing 
\begin{center}
$x=(\beta_1+\beta_3+R) ((3 \beta_2^2-4-4 \beta_2) \beta_3+4 \beta_3^3+\beta_1 ((\beta_2-2)^2-4 \beta_3^2)+(\beta_2-2)^2 R+4 \beta_3^2 R),$
$y=\beta_1 (\beta_2-2)+(\beta_2+2) \beta_3+(\beta_2-2) R,$
$z=2 ((3 \beta_2^2-4 \beta_2-4) \beta_3+4 \beta_3^3+\beta_1 ((\beta_2-2)^2-4 \beta_3^2)+(\beta_2-2)^2 R+4 \beta_3^2 R),$
$w=\beta_2^2-2 \beta_2-2 \beta_3 (\beta_1-\beta_3-R),$  \ where  
$R=\sqrt{\beta_2^2+( \beta_1 -\beta_3)^2},$
\end{center}
we have a representative $\langle \nabla_1+\nabla_5, \sum\limits_{i=1}^3\beta_i^{\star}\nabla_i+\nabla_4 \rangle.$ 
This case will be considered below.

\item if $\beta_1 = \frac{4 \beta_2 - 4 \beta_2^2 + \beta_2^3 - 8 \beta_3^2}{
 4 (\beta_2-2) \beta_3}, \beta_3\neq 0, \beta_2 \neq 2,$ by choosing 
 $x= \frac{2 z \beta_3}{2-\beta_2}$ and 
 $y=\frac{2-\beta_2-2 w z \beta_3}{z ( \beta_2-2)},$
 we have  representatives 
 \begin{center}$\langle \nabla_3+\nabla_5,\sum\limits_{i=1}^3\beta_i^{\star}\nabla_i+\nabla_4 \rangle$
or 
$\langle \nabla_5, \sum\limits_{i=1}^3\beta_i^{\star}\nabla_i+\nabla_4 \rangle,$ \end{center}
depending on $\beta_2=0$ or not.
This case will be considered below.

\item if $\beta_3=0, \beta_2\not\in \{ 0,2\}$ and $(\beta_1,\beta_2)\neq 0,$
by choosing 
$x=T z$
and
$y=\frac{T w (\beta_2-2)}{2 T^2-2 T \beta_1-\beta_2},$
where $4 T^2 -4 T \beta_1 -\beta_2^2=0,$ we have representatives 
\begin{center}
$\langle \nabla_1+\nabla_5, \sum\limits_{i=1}^3\beta_i^{\star}\nabla_i+\nabla_4 \rangle,$ $\langle \nabla_3+\nabla_5, \sum\limits_{i=1}^3\beta_i^{\star}\nabla_i+\nabla_4 \rangle$
or 
$\langle \nabla_5, \sum\limits_{i=1}^3\beta_i^{\star}\nabla_i+\nabla_4 \rangle,$
\end{center}
depending on $\beta_2=0$ or not.
This case will be considered below.

\item if $\beta_3=0, \beta_2=0,$
by choosing 
$x=0,$
$y=1,$
and
$w=1$ we have representatives 
\begin{center}$\langle \nabla_3+\nabla_5, \sum\limits_{i=1}^3\beta_i^{\star}\nabla_i+\nabla_4 \rangle$
or 
$\langle \nabla_5,\sum\limits_{i=1}^3\beta_i^{\star}\nabla_i+\nabla_4 \rangle,$
\end{center}depending on $\beta_1=0$ or not.
This case will be considered below.

\item if $\beta_3=0, \beta_2=2,$
by choosing 
$x=1,$
$y=0,$
$z=\frac{2}{\beta_1+\sqrt{4+\beta_1^2}}$
and
$w=1,$ we have a representative 
$\langle \nabla_3+ \nabla_5, \sum\limits_{i=1}^3\beta_i^{\star}\nabla_i+\nabla_4 \rangle.$
This case will be considered below.

\item if $\beta_3\neq 0, \beta_2=2,$
by choosing 
$x=\frac{\beta_1+\beta_3+R}{2},$
$y=\frac{\sqrt{2} \beta_3}{\sqrt{\beta_3^2 (\beta_3-\beta_1+R)}},$
$z=1$
and
$w=\frac{\sqrt{\beta_3^2 (\beta_3-\beta_1+R)}}{\sqrt{2} \beta_3},$
where $R=\sqrt{4+(\beta_1- \beta_3)^2},$
we have a representative 
$\langle \nabla_1+ \nabla_5, \sum\limits_{i=1}^3\beta_i^{\star}\nabla_i+\nabla_4 \rangle.$
This case will be considered below.

\end{enumerate}

   \item $\xi_1=\nabla_1+\nabla_3+\nabla_5,$ $\xi_2=\sum\limits_{i=1}^3\beta_i \nabla_i.$

\begin{enumerate}
  
    \item if $\beta_1\neq0$ and $\beta_2\neq 0,$ by choosing 
    $x=1,$ 
    $y=\frac{\beta_3-\beta_1}{\beta_1},$
    $z=0$
    and
    $w=-\frac{\beta_2}{\beta_1},$
    we have a representative
$\langle \nabla_2+ \nabla_5, \sum\limits_{i=1}^3\beta_i^{\star}\nabla_i \rangle.$
This case will be considered below.

  \item if $\beta_1\neq0$ and $\beta_2= 0,$ by choosing 
    $x=1,$ 
    $y=0,$
    $z=0$
    and
    $w=1,$
    we have representatives
$\langle \nabla_3+ \nabla_5, \sum\limits_{i=1}^3\beta_i^{\star}\nabla_i \rangle$
or
$\langle  \nabla_5, \sum\limits_{i=1}^3\beta_i^{\star}\nabla_i \rangle,$ depending on $\beta_1=\beta_3$ or not. This case will be considered below.

  \item if $\beta_1=0$ and $\beta_2\neq 0,$ by choosing 
    $x=\beta_2^2,$ 
    $y=-\beta_3+\sqrt{\beta_2^2+\beta_3^2},$
    $z=0$
    and
    $w= \beta_2,$
    we have a representative
$\langle \nabla_1+ \nabla_5, \sum\limits_{i=1}^3\beta_i^{\star}\nabla_i\rangle.$
This case will be considered below.

  \item if $\beta_1=0$ and $\beta_2= 0,$ by choosing 
    $x=1,$ 
    $y=0,$
    $z=0$
    and
    $w= 1,$
    we have a representative
$\langle \nabla_1+ \nabla_5,\sum\limits_{i=1}^3\beta_i^{\star}\nabla_i\rangle.$
This case will be considered below.

\end{enumerate}

\item $\xi_1=\nabla_1+\nabla_5,$ $\xi_2=\sum\limits_{i=1}^3 \beta_i \nabla_i+\nabla_4.$

\begin{enumerate}
    \item $\beta_3\neq0,$ then by choosing 
    $x=1,$    $w=-\frac{y(\beta_2+z \beta_3)}{\beta_3}$    and 
    $z,$ such that $z^3 \beta_3+z^2 \beta_2+z \beta_1=1,$ we have 
    the representative 
 \begin{center}   $\left\langle 
    \frac{\beta_3 (2 \beta_3+2 z^2 \beta_2 \beta_3+2 z^3 \beta_3^2+z (\beta_2^2-2 \beta_1 \beta_3))}{y (\beta_2+2 z \beta_3)^2} \nabla_2+ \nabla_5, \sum\limits_{i=1}^3\beta_i^{\star}\nabla_i+\nabla_4\right\rangle,$
    \end{center}
which give one from  the following representatives
\begin{center}$\langle 
     \nabla_2+ \nabla_5, \sum\limits_{i=1}^3\beta_i^{\star}\nabla_i+\nabla_4\rangle$
     or 
     $\langle 
     \nabla_5, \sum\limits_{i=1}^3\beta_i^{\star}\nabla_i+\nabla_4\rangle,$\end{center}
     depending on $2 \beta_3+2 z^2 \beta_2 \beta_3+2 z^3 \beta_3^2+z (\beta_2^2-2 \beta_1 \beta_3)=0$ or not. This case will be considered below.

\item $\beta_3=0,$ then by choosing $x=0,$ $y=1$ and $z=1,$ we have the representative 
\begin{center}   $\langle 
     -\beta_2  \nabla_2-( \beta_1+w \beta_2) \nabla_3+ \nabla_5, \sum\limits_{i=1}^3\beta_i^{\star}\nabla_i+\nabla_4\rangle,$
    \end{center}
which give one from the following representatives
\begin{center}$\langle 
     \xi , \sum\limits_{i=1}^3\beta_i^{\star}\nabla_i+\nabla_4\rangle, \ \xi \in \{ \nabla_2+\nabla_5, \nabla_3+\nabla_5, \nabla_5 \}, $
     \end{center}
depending on $\beta_2=0$,   $\beta_1=0$  or not. This case will be considered below.

\end{enumerate}

    \item $\xi_1=\nabla_1+\nabla_5,$ $\xi_2=\sum\limits_{i=1}^3 \beta_i\nabla_i.$ 
    
    \begin{enumerate}
        \item if $\beta_1\neq 0,$ by choosing 
        $x=1,$
         $z=0$
        and 
        $w=1,$
        we have the representative
\begin{center}        $\langle -\frac{\beta_2}{\beta_1}\nabla_2-\frac{y \beta_2+\beta_3}{\beta_1} \nabla_3+\nabla_5,  \sum\limits_{i=1}^3\beta_i^{\star}\nabla_i\rangle,$ \end{center} 
   which give one of the representatives
\begin{center}$\langle 
     \xi , \sum\limits_{i=1}^3\beta_i^{\star}\nabla_i+\nabla_4\rangle, \ \xi \in \{ \nabla_2+\nabla_5, \nabla_3+\nabla_5, \nabla_5 \}, $
     \end{center}
depending on $\beta_2=0, \beta_3=0$  or not.    This case will be considered below.

        \item if $\beta_1= 0, \beta_2\neq 0$ and $\beta_3\neq 0,$ by choosing 
        $x=\beta_3^2\beta_2^{-2},$
        $y=0,$
        $z=0$
        and 
        $w=\beta_3\beta_2^{-1},$ 
        we have the  representative
        $\langle \nabla_1+\nabla_5,  \nabla_2+ \nabla_3   \rangle.$ 

    \item if $\beta_1= 0, \beta_2= 0$ and $\beta_3\neq 0,$  
        we have the     representative 
        $\langle \nabla_1+\nabla_5,     \nabla_3   \rangle.$

 \item if $\beta_1= 0, \beta_2\neq 0$ and $\beta_3= 0,$  
        we have the     representative 
        $\langle \nabla_1+\nabla_5,     \nabla_2   \rangle.$ 

 \item if $\beta_1\neq 0, \beta_2= 0$ and $\beta_3= 0,$  
        we have the     representative 
        $\langle \nabla_5,     \nabla_1   \rangle.$

    \end{enumerate}

\item $\xi_1=\nabla_3+\nabla_5,$ $\xi_2=\sum\limits_{i=1}^3 \beta_i \nabla_i+\nabla_4.$

\begin{enumerate}
    \item $\beta_1\neq 0$ and $4 \beta_1\beta_3 \neq  (1 -  \beta_2)^2,$
    by choosing 
    $x= \beta_1-\beta_1 \beta_2-R,$
    $y=\frac{R-\beta_1-\beta_1 \beta_2}{2 \beta_1},$
    $z=2 \beta_1^2$
    and 
    $w=\beta_1,$
    where $R=\beta_1\sqrt{(1- \beta_2)^2-4 \beta_1 \beta_3},$
    we have the representative 
$\langle  \nabla_2+\nabla_5, 
\sum\limits_{i=1}^3\beta^{\star}_i\nabla_i+\nabla_4\rangle.$
    This case will be considered below.

    \item $\beta_1\neq 0$ and $4 \beta_1\beta_3 =  1 - 2 \beta_2 + \beta_2^2,$
    by choosing 
    $x=1-\beta_2,$
    $y=-\frac{\beta_2+1}{2},$ 
    $z=2\beta_1$
    and 
    $w=\beta_1,$
    we have the representative $\langle  \nabla_2+\nabla_5, 
\sum\limits_{i=1}^3\beta^{\star}_i\nabla_i+\nabla_4\rangle.$
   This case will be considered below.

   \item 

$\beta_1=0, \beta_2\neq 1$ and $\beta_3\neq 0,$ by choosing 
   $x=1,$ 
   $z=\frac{ 1-\beta_2}{ \beta_3}$
   and $w=0,$ we have  one from the following representatives 
\begin{center}$\langle    \nabla_2+\nabla_5, 
\sum\limits_{i=1}^3\beta^{\star}_i\nabla_i+\nabla_4\rangle$
or 
$\langle  \nabla_5, 
\sum\limits_{i=1}^3\beta^{\star}_i\nabla_i+\nabla_4\rangle,$\end{center}
depending on $\beta_2=0$ or not.
These cases will be considered below.

 \item 

$\beta_1=0, \beta_2= 1$ and $\beta_3\neq 0,$ by choosing 
   $x=1,$ 
   $y=0,$
   $z=0$
   and $w= \beta_3^{-1},$ we have  the following representative
$\langle    \nabla_3+\nabla_5, 
 \nabla_2+\nabla_3+\nabla_4\rangle.$

  \item 
  
$\beta_1=0, \beta_2 \neq 0$ and $\beta_3= 0,$ by choosing 
   $x=0,$ 
   $y=-\beta_2,$
   $z=1$
   and 
   $w=0,$
   we have the representative 
$\langle  \nabla_2+ \nabla_5, 
\sum\limits_{i=1}^3\beta^{\star}_i\nabla_i+\nabla_4\rangle.$ 
This case will be considered below.

  \item 
  
$\beta_1=0, \beta_2 = 0$ and $\beta_3= 0,$ by choosing 
   $x=0,$ 
   $y=-1,$
   $z=1$
   and 
   $w=1,$
   we have the representative 
$\langle    \nabla_5, 
\sum\limits_{i=1}^3\beta^{\star}_i\nabla_i+\nabla_4\rangle.$ 
This case will be considered below.

\end{enumerate}

\item $\xi_1=\nabla_3+\nabla_5,$ $\xi_2=\sum\limits_{i=1}^3 \beta_i \nabla_i.$

\begin{enumerate}
\item $\beta_1 = 0$ and $(\beta_2, \beta_3)\neq 0,$ by choosing $z=0,$
we have the representative 
\begin{center}
$\left\langle  -\frac{ \beta_2}{y\beta_2+w\beta_3} \nabla_2 +\nabla_5, \sum\limits_{i=1}^3 \beta_i \nabla_i \right\rangle,$\end{center}
which gives one of the following representatives
\begin{center}
$\langle  \nabla_2 +\nabla_5, \sum\limits_{i=1}^3 \beta_i^{\star} \nabla_i\rangle$
or
$\langle \nabla_5, \sum\limits_{i=1}^3 \beta_i^{\star} \nabla_i\rangle,$
\end{center}
depending on $\beta_2=0$ or not. This case will be considered below.

\item $\beta_1 \neq 0,$ by choosing 
$x=1,$ $y=-\frac{w\beta_2}{2\beta_1}$ and $z=0,$
we have the representative 
\begin{center}$\langle \nabla_3+\nabla_5,  4\beta_1^2 \nabla_1+ (4\beta_1 \beta_3- \beta_2^2)w^2 \nabla_3 \rangle,$\end{center}
which gives the representatives 
\begin{center}$\langle \nabla_3+\nabla_5,    \nabla_1+  \nabla_3 \rangle$ or
$\langle \nabla_3+\nabla_5,    \nabla_1 \rangle,$\end{center}
depending on $4\beta_1 \beta_3= \beta_2^2$ or not.
\end{enumerate}

   \item $\xi_1=\nabla_2+\nabla_5,$  $\xi_2=\sum\limits_{i=1}^3 \beta_i \nabla_i+\nabla_4,$ then by choosing 
      $y=0,$  $z=0$ and $w=1$ we have 
\begin{center}    $\langle \nabla_2+\nabla_5, 
    \beta_1 \nabla_1+ \beta_2 x^{-1}\nabla_2+ \beta_3 x^{-2}\nabla_3+\nabla_4\rangle,$\end{center} which gives 
\begin{center}    
$\langle \nabla_2+\nabla_5, \xi \rangle, 
\xi \in \{
    \alpha \nabla_1+ \beta \nabla_2+ \nabla_3+\nabla_4, \ 
    \alpha \nabla_1+  \nabla_2+ \nabla_4, \ 
    \nabla_1+  \nabla_4, \ \nabla_4\},$ \end{center}
depending on $\beta_i=0 \ ( 1 \leq i \leq 3)$ or not.

    \item $\xi_1=\nabla_2+\nabla_5,$ $ \xi_2= \sum\limits_{i=1}^3\beta_i \nabla_i.$ 
    
    \begin{enumerate}
        \item if $\beta_1\neq 0, \beta_2\neq 0$ and $\beta_3\neq 0,$ by choosing 
        $x=\sqrt{\beta_3\beta_1^{-1}}$, 
        $y=0,$
        $z=0$
        and 
        $w=1,$
        we have the family of representatives
        $\langle \nabla_2+\nabla_5,  \nabla_1+\alpha \nabla_2 +\nabla_3 \rangle.$

        \item if $\beta_1\neq 0, \beta_2\neq 0$ and $\beta_3= 0,$ by choosing 
        $x=\beta_2\beta_1^{-1},$
        $y=0,$
        $z=0$
        and 
        $w=1,$
        we have the  representative
        $\langle \nabla_2+\nabla_5,  \nabla_1+ \nabla_2   \rangle.$ 

        \item if $\beta_1\neq 0, \beta_2= 0$ and $\beta_3\neq 0,$ by choosing 
        $x=\sqrt{\beta_3\beta_1^{-1}},$
        $y=0,$
        $z=0$
        and 
        $w=1,$
        we have the  representative
        $\langle \nabla_2+\nabla_5,  \nabla_1+ \nabla_3   \rangle.$ 

  \item if $\beta_1= 0, \beta_2\neq 0$ and $\beta_3\neq 0,$ by choosing 
       $x=\beta_3\beta_2^{-1},$ 
        $y=0,$
        $z=0$
        and 
        $w=1,$
        we have the  representative
        $\langle \nabla_2+\nabla_5,  \nabla_2+ \nabla_3   \rangle.$ 
    
       \item if $\beta_1= 0, \beta_2= 0$ and $\beta_3\neq 0,$  
        we have the     representative 
        $\langle \nabla_2+\nabla_5,     \nabla_3   \rangle.$

 \item if $\beta_1= 0, \beta_2\neq 0$ and $\beta_3= 0,$  
        we have the     representative 
        $\langle \nabla_5,     \nabla_2   \rangle.$ 

 \item if $\beta_1\neq 0, \beta_2= 0$ and $\beta_3= 0,$  
        we have the     representative 
        $\langle \nabla_2+\nabla_5,     \nabla_1   \rangle.$

\end{enumerate}

\item $\xi_1=\nabla_5, \xi_2= \sum\limits_{i=1}^3\beta_i \nabla_i+\nabla_4.$

\begin{enumerate}
    \item $\beta_1\neq 0,$ by choosing 
    $y=-\frac{\beta_2}{2},$ and
    $z=0$ and 
    $w=\beta_1,$ we have the representative 
    \begin{center}
         $\left\langle  \nabla_5,  \nabla_1-\frac{\beta_2^2-4\beta_1\beta_3}{4 x^2}\nabla_3+\nabla_4\right\rangle,$
    \end{center}
    which gives one of the following representatives
 \begin{center}   $\left\langle  \nabla_5,  \nabla_1+\nabla_3+\nabla_4\right\rangle$
    or
        $\left\langle \nabla_5,   \nabla_1+\nabla_4\right\rangle,$
\end{center}
depending on $4\beta_1\beta_3=\beta_2^2$ or not.
    \item $\beta_1=0,$ by choosing 
        $z=0$
    and 
    $w=1,$ we have the representative
      \begin{center}
         $\left\langle  \nabla_5,  \frac{\beta_2}{x}\nabla_2+\frac{y\beta_2+\beta_3}{x^2}\nabla_3+\nabla_4\right\rangle,$
    \end{center}
    which gives one of the following representatives
\begin{center}
$\left\langle  \nabla_5,  \nabla_2+\nabla_4\right\rangle$, 
        $\left\langle  \nabla_5,   \nabla_3+\nabla_4\right\rangle,$  or $\left\langle  \nabla_5,   \nabla_4\right\rangle,$
        \end{center}
depending on $\beta_2=0$,  $\beta_3=0$  or not.
\end{enumerate}

\item $\xi_1=\nabla_5, \xi_2= \sum\limits_{i=1}^3\beta_i \nabla_i.$

\begin{enumerate}
    \item $\beta_1\neq 0,$ by choosing 
    $y=-\frac{\beta_2}{2\beta_1},$ 
    $z=0$ and 
    $w=1,$ we have the representative 
  \begin{center}
         $\left\langle  \nabla_5,  x^2\beta_1\nabla_1+\frac{ 4\beta_1\beta_3-\beta_2^2}{4\beta_1}\nabla_3\right\rangle,$
    \end{center} 
    which gives one of the following representatives
\begin{center}   $\left\langle  \nabla_5,  \nabla_1+\nabla_3\right\rangle$
    or
        $\left\langle \nabla_5,   \nabla_1\right\rangle,$
\end{center}
depending on $4\beta_1\beta_3=\beta_2^2$ or not.
    \item $\beta_1=0,$ by choosing 
    $x=1,$
    $z=0$
    and 
    $w=1,$ we have the representative
      \begin{center}
         $\left\langle  \nabla_5,  \beta_2\nabla_2+(y\beta_2+\beta_3)\nabla_3\right\rangle,$
    \end{center}
    which gives one of the following representatives
$\left\langle \nabla_5,  \nabla_2\right\rangle$
    or
        $\left\langle  \nabla_5,   \nabla_3\right\rangle,$
depending on $\beta_2=0$ or not.
    
\end{enumerate}
\end{enumerate}

Consider the orbit $\langle\nabla_2+\nabla_5,\alpha\nabla_1+\nabla_2+\nabla_4\rangle:$
\begin{enumerate}
    \item if $\alpha\neq0$ then by taking $x=0$, $y=-1$, $z=i\sqrt{\alpha}$ and $w=\alpha$, we obtain the representative \begin{center}$\langle\nabla_2+\nabla_5,\frac{i(1+\sqrt{\alpha})}{\sqrt{\alpha}}\nabla_2+\nabla_3+\nabla_4\rangle$;
    \end{center}
    
    \item if $\alpha=0$ then by taking $x=\frac{i}{2}$, $y=-\frac{1}{2}$, $z=\frac{i}{2}$, $w=\frac{1}{2}$, we obtain the representative \begin{center}
        $\langle\nabla_5,\nabla_1+\nabla_3+\nabla_4\rangle$.
    \end{center}
\end{enumerate}

Consider now the orbit $\langle\nabla_5,\nabla_2+\nabla_4\rangle:$ 

By taking $x=0$, $y=-1$, $z=-1$ and $w=0$, we obtain the representative $\langle\nabla_2+\nabla_5,\nabla_4\rangle$. 

Summarizing, we obtain the following distinct orbits:
\begin{center}
$    \left\langle \nabla_1+\nabla_5,\nabla_2+\nabla_3\right\rangle, 
    \left\langle \nabla_1+\nabla_5,\nabla_3\right\rangle, 
    \left\langle \nabla_1+\nabla_5,\nabla_2\right\rangle,  
    \left\langle \nabla_5,\nabla_1\right\rangle,  \left\langle \nabla_3+\nabla_5,\nabla_2+\nabla_3+\nabla_4\right\rangle,$
    \\
$   
    \left\langle \nabla_3+\nabla_5,\nabla_1+\nabla_3\right\rangle, 
    \left\langle \nabla_3+\nabla_5,\nabla_1\right\rangle,    \left\langle \nabla_2+\nabla_5,\nabla_1+\nabla_4\right\rangle, 
    \left\langle \nabla_2+\nabla_5,\nabla_4\right\rangle, $ 
    \\
$ 
    \left\langle \nabla_2+\nabla_5,\nabla_1+\alpha\nabla_2+\nabla_3\right\rangle^{O(\alpha)=O(-\alpha)},   \left\langle \nabla_2+\nabla_5,\nabla_1+\nabla_2\right\rangle, 
    \left\langle \nabla_2+\nabla_5,\nabla_2+\nabla_3\right\rangle, $ 
    \\
$ 
    \left\langle \nabla_2+\nabla_5,\nabla_3\right\rangle, 
    \left\langle \nabla_5,\nabla_2\right\rangle,    \left\langle \nabla_2+\nabla_5,\nabla_1\right\rangle, 
    \left\langle \nabla_5,\nabla_1+\nabla_3+\nabla_4\right\rangle, $ 
    \\
$ 
    \left\langle \nabla_5,\nabla_1+\nabla_4\right\rangle,   \left\langle \nabla_5,\nabla_3+\nabla_4\right\rangle, \left\langle \nabla_5,\nabla_4\right\rangle, \left\langle \nabla_5,\nabla_1+\nabla_3\right\rangle$
\end{center}
and a special family of orbits 
\begin{center}    ${\mathfrak O}(\alpha,\beta)=\left\langle  \nabla_2+\nabla_5,\alpha\nabla_1+\beta\nabla_2+\nabla_3+\nabla_4\right\rangle,$
\end{center} 
 which will be considered below:
 \begin{enumerate}
     \item 
It is easy to see that    ${\mathfrak O}(\alpha,\beta)={\mathfrak O}(\alpha,-\beta).$

\item\label{supercaso} If $\alpha\neq 0,1$ and 
$\beta \neq \pm \frac{1 - 2 \alpha}{\sqrt{\alpha}},$ then by choosing 
$x=\frac{(\beta Q+\alpha+Q^2) \sqrt{Q^2-\alpha}}{Q(Q^2+\alpha+\beta Q-1)},$
$y=\frac{Q^2-\alpha}{Q (Q^2+\alpha+\beta Q-1)},$
$z=\frac{\sqrt{Q^2-\alpha}}{Q^2+\alpha+\beta Q-1},$
$w=\frac{Q^2-\alpha}{Q^2+\alpha+\beta Q-1}$
 and 
$Q =\frac{ \alpha \beta \pm \alpha\sqrt{ 4 - 4 \alpha +\beta^2}}{2 - 2 \alpha},$
we have the representative
\begin{center}${\mathfrak O}\Big( \frac{ \alpha^2+\beta Q+\alpha (1+\beta Q+Q^2)}{Q^2-\alpha },
\frac{(1+2 \alpha+\beta Q) (Q^2+\alpha+\beta Q-1)}{(Q^2+\alpha+\beta Q-1)\sqrt{Q^2-\alpha}  }\Big).$
\end{center}

\item If $\alpha=1, \beta \neq 0, \pm 1,$ then by choosing 
\begin{center}$x=\frac{(\beta Q+1+Q^2) \sqrt{Q^2-1}}{Q^2 (\beta+Q)},$
$y=\frac{Q^2-1}{Q^2 (\beta+Q)},$
$z=\frac{\sqrt{Q^2-1}}{Q (\beta+Q)},$
$w=\frac{Q^2-1}{Q (\beta+Q)},$
and $Q=-\beta^{-1},$
\end{center}
we have the representative
${\mathfrak O}\Big( \frac{2+2 \beta Q+Q^2}{Q^2-1},
\frac{Q (\beta+Q) (3+\beta Q)}{ Q (\beta+Q) \sqrt{Q^2-1}}   \Big),$
which will be jointed with the   case (\ref{supercaso}).

\item If $\beta =   \frac{1 - 2 \alpha}{\sqrt{\alpha}}$
(if $\beta =   -\frac{1 - 2 \alpha}{\sqrt{\alpha}}$ we have the isomorphic case)
and $\alpha\neq 0, \frac{1}{2},1,$ then by choosing 
\begin{center}$x=\frac{i\alpha}{ \sqrt{1-2 \alpha}},$
$y=\frac{\alpha-1}{\sqrt{\alpha}},$
$z=\frac{i(\alpha-1)\sqrt{\alpha}}{\sqrt{1-2\alpha}}$ and
$w=\alpha,$
\end{center}
we have the representative
${\mathfrak O}\Big( \frac{\alpha}{2 \alpha-1},
-\frac{i}{ \sqrt{\alpha(1-2\alpha)}}   \Big),$
which will be jointed with the     case (\ref{supercaso}).

\item If $\alpha=\frac{1}{2}, \beta=0,$ 
then ${\mathfrak O}(\frac{1}{2}, 0)$ is not isomorphic  to
other
${\mathfrak O}(\gamma_1,\gamma_2).$

\item If $\alpha=0,$ then ${\mathfrak O}(0,\beta)$ is isomorphic only to 
${\mathfrak O}(0,-\beta).$

 \end{enumerate}

\subsubsection{  The description of second cohomology spaces of $4$-dimensional nilpotent Lie algebras }\label{coho-4dim}

By Theorem 2.2 in \cite{saidsuper}, we must only consider complex $4$-dimensional nilpotent Lie algebras. In the next table we give the list of complex $4$-dimensional nilpotent Lie algebras and their cohomology spaces.

\

\begin{longtable}{llllllll} 
\hline

${\mathfrak n}_{3}$ &$:$ &  $e_1e_2=  e_3$ & $e_2e_1=-e_3$  \\ 

\multicolumn{8}{l}{
${\rm H}^2_{\mathbb{S}}({\mathfrak n}_{3})=
\Big\langle 
[\Delta_{ 1 1}], [\Delta_{ 1 2}], 
[\Delta_{ 1 4}], [\Delta_{ 2 2}], 
[\Delta_{ 2 4}], 
[\Delta_{ 1 3} - \Delta_{ 3 1}], 
[\Delta_{ 2 3} - \Delta_{ 3 2}], [\Delta_{ 4 1}], 
[\Delta_{ 4 2}], [\Delta_{ 4 4}]
\Big\rangle $}\\

\hline
${\mathfrak n}_{4}$ &$:$& $e_1 e_2 =e_3$ & $e_2 e_1 =-e_3$ & $e_2e_3 =e_4$ & $e_3e_2=-e_4$\\
  
\multicolumn{8}{l}{
${\rm H}^2_{\mathbb{S}}({\mathfrak n}_{4})=
\Big\langle 
[\Delta_{ 1 1}], [\Delta_{ 1 2}], 
[\Delta_{ 2 2}], [\Delta_{ 1 3} - 
\Delta_{ 3 1}], [\Delta_{ 2 4} - 
\Delta_{ 4 2}]
\Big\rangle $}\\
\hline
\end{longtable}

\subsubsection{Central extensions of ${\mathfrak n}_{3}$}
	Let us use the following notations:
\begin{longtable}{lllll}

$\nabla_1 =[\Delta_{11}],$& $\nabla_2 =[\Delta_{ 1 2}],$ & $\nabla_3 =[\Delta_{ 2 2}],$&
$\nabla_4 =[\Delta_{ 1 3}] -  [\Delta_{ 3 1}],$ &
$\nabla_5 =[\Delta_{ 2 3}] -  [\Delta_{ 3 2}],$ \\
$\nabla_6 =[\Delta_{ 1 4}],$&
$\nabla_7 =[\Delta_{ 4 1}],$&
$\nabla_8 =[\Delta_{ 2 4}],$& 
$\nabla_9 =[\Delta_{ 4 2}],$& 
$\nabla_{10} =[\Delta_{ 4 4}].$
\end{longtable}
Take $\theta=\sum\limits_{i=1}^{10}\alpha_i\nabla_i\in {\rm H_{{\mathbb S }}^2}({\mathfrak n}_{3}).$ 
	The automorphism group of ${\mathfrak n}_{3}$ consists of invertible matrices of the form
\begin{center}	
$	\phi=
	\begin{pmatrix}
	x &  y  &  0 & 0\\
	z &  w  &  0 & 0\\
	q &  p  &  xw-yz & u\\
	r &  t  &  0 & v \\
	\end{pmatrix}.$ 
	
	\end{center}
	Since
	$$
	\phi_1^T\begin{pmatrix}
	\alpha_1 &  \alpha_2 & \alpha_4 & \alpha_6\\
	0  &  \alpha_3 & \alpha_5 &  \alpha_8 \\
	-\alpha_4 &  -\alpha_5 & 0    & 0\\
	 \alpha_7 &  \alpha_9 & 0    & \alpha_{10}\\
	\end{pmatrix} \phi_1=	\begin{pmatrix}
	\alpha^*_1 &  \alpha^*_2-\alpha^* & \alpha^*_4 & \alpha^*_6\\
	\alpha^*   &  \alpha^*_3 & \alpha^*_5 &  \alpha^*_8 \\
	-\alpha^*_4 &  -\alpha^*_5 & 0    & 0\\
	 \alpha^*_7 &  \alpha^*_9 & 0    & \alpha^*_{10}\\
	\end{pmatrix},
	$$
	 we have that the action of ${\rm Aut} ({\mathfrak n}_{3})$ on the subspace
$\langle \sum\limits_{i=1}^{10}\alpha_i\nabla_i  \rangle$
is given by
$\langle \sum\limits_{i=1}^{10}\alpha_i^{*}\nabla_i\rangle,$
where
\begin{longtable}{lll}
$\alpha^*_1$&$ =$&$   x^2 \alpha_1+r^2 \alpha_{10}+x z \alpha_2+z^2 \alpha_3+r x (\alpha_6+\alpha_7)+r z (\alpha_8+\alpha_9),$ \\
$\alpha^*_2$&$ =$&$   x (2 y \alpha_1+w \alpha_2+t (\alpha_6+\alpha_7))+z (y \alpha_2+2 w \alpha_3+t (\alpha_8+\alpha_9))+$\\&&$r (2 t \alpha_{10}+y (\alpha_6+\alpha_7)+w (\alpha_8+\alpha_9)),$ \\
$\alpha^*_3$&$ =$&$   y^2 \alpha_1+t^2 \alpha_{10}+w y \alpha_2+w^2 \alpha_3+t y (\alpha_6+\alpha_7)+t w (\alpha_8+\alpha_9),$ \\
$\alpha^*_4$&$ =$&$   (w x-y z) (x \alpha_4+z \alpha_5),$ \\
$\alpha^*_5$&$ =$&$ (w x-y z) (y \alpha_4+w \alpha_5),$\\
$\alpha^*_6$&$ =$&$ r v \alpha_{10}+u (x \alpha_4+ z \alpha_5)+v (x \alpha_6+ z \alpha_8),$\\
$\alpha^*_7$&$ =$&$ r v \alpha_{10}-u (x \alpha_4+z \alpha_5)+v (x \alpha_7+z \alpha_9),$\\
$\alpha^*_8$&$ =$&$ t v \alpha_{10}+u (y \alpha_4+ w \alpha_5)+v (y \alpha_6+ w \alpha_8),$\\
$\alpha^*_9$&$ =$&$ t v \alpha_{10}-u (y \alpha_4+w \alpha_5)+v (y \alpha_7+w \alpha_9),$\\
$\alpha^*_{10}$&$ =$&$ v^2 \alpha_{10}.$
\end{longtable}

%\noindent 
%{\bf $\bullet$ $1$-dimensional central extensions of ${\mathcal N}_{01}.$}
Since, we are interested only in new algebras, we have  $ (\alpha_4, \alpha_5) \neq0$
and $(\alpha_6, \ldots, \alpha_{10})\neq 0.$
Then 

\begin{enumerate}
\item if $\alpha_{10}\neq 0,$ 
then we can suppose that $\alpha_{10}=1$ and $\alpha_4\neq 0, \alpha_5=0.$
By choosing
\begin{center}$y=0,$ 
$w=\frac{v^2}{x^2 \alpha_4},$
$r=-\frac{x (\alpha_6+\alpha_7)+z (\alpha_8+\alpha_9)}{2},$
$t=-\frac{v^2 \alpha_9}{x^2 \alpha_4},$
$u=\frac{v (x (\alpha_7-\alpha_6)+z (\alpha_9-\alpha_8))}{2 x \alpha_4},$
\end{center}
we have $\alpha_5^*=\alpha_6^*=\alpha_7^*=\alpha_9^*=0$ and 
\begin{longtable}{llll}
\multicolumn{4}{l}{$\alpha^*_1=\frac{4 x z \alpha_2+x^2 (4 \alpha_1-(\alpha_6+\alpha_7)^2)-2 x z (\alpha_6+\alpha_7) (\alpha_8+\alpha_9)+z^2 (4 \alpha_3-(\alpha_8+\alpha_9)^2)}{4},$}\\

\multicolumn{4}{l}{$\alpha^*_2=\frac{v^2 (2 x \alpha_2-x (\alpha_6+\alpha_7) (\alpha_8+\alpha_9)+z (4 \alpha_3-(\alpha_8+\alpha_9)^2))}{2 x^2 \alpha_4},$}\\

$\alpha^*_3=\frac{v^4 (\alpha_3-\alpha_8 \alpha_9)}{x^4 \alpha_4^2},$&

$\alpha^*_4=v^2,$&

$\alpha^*_8=\frac{v^3 (\alpha_8-\alpha_9)}{x^2 \alpha_4},$&

$\alpha^*_{10}=v^2.$
    \end{longtable}
Hence,
\begin{enumerate}
    \item\label{89first} if $\alpha_8\neq \alpha_9$ and $4\alpha_3\neq(\alpha_8+\alpha_9)^2,$
    then for $v=\frac{x^2 \alpha_4}{\alpha_8-\alpha_9}$ and $z= \frac{x (2 \alpha_2-(\alpha_6+\alpha_7) (\alpha_8+\alpha_9)) }{(\alpha_8+\alpha_9)^2-4 \alpha_3},$ we have two families of representatives
\begin{center}    
    $\langle 
    \nabla_1+\alpha \nabla_3+\nabla_4+\nabla_8+\nabla_{10}
    \rangle_{\alpha\neq \frac{1}{4}}$ 
and
    $\langle 
    \alpha \nabla_3+\nabla_4+\nabla_8+\nabla_{10}
    \rangle_{\alpha\neq \frac{1}{4}},$ 
\end{center}
    depending on 
    $\alpha_2^2+\alpha_3 (\alpha_6+\alpha_7)^2-\alpha_2 (\alpha_6+\alpha_7) (\alpha_8+\alpha_9)=\alpha_1 (4 \alpha_3-(\alpha_8+\alpha_9)^2)$ or not.
    
  \item if $\alpha_8\neq \alpha_9,$  $4\alpha_3=(\alpha_8+\alpha_9)^2$ and
  $2 \alpha_2\neq (\alpha_6+\alpha_7) (\alpha_8+\alpha_9),$
    then for  
\begin{center}    $x=\frac{2 \alpha_2-(\alpha_6+\alpha_7) (\alpha_8+\alpha_9)}{2 \alpha_4},$
    $v=\frac{((\alpha_6+\alpha_7) (\alpha_8+\alpha_9)-2 \alpha_2)^2}{4 \alpha_4 (\alpha_8-\alpha_9)}$ and 
    $z=\frac{(\alpha_6+\alpha_7)^2-4 \alpha_1}{4 \alpha_4},$  \end{center}
    we have   the representative
    $\langle 
    \nabla_2+\frac{1}{4} \nabla_3+\nabla_4+\nabla_8+\nabla_{10}
    \rangle.$ 
   \item if $\alpha_8\neq \alpha_9,$  $4\alpha_3=(\alpha_8+\alpha_9)^2$ and
  $2 \alpha_2= (\alpha_6+\alpha_7) (\alpha_8+\alpha_9),$
    then   for $v=\frac{x^2\alpha_4}{\alpha_8-\alpha_9}$,      we have
 \begin{center}    $\left\langle 
   \frac{x^2 (4 \alpha_1-(\alpha_6+\alpha_7)^2)}{4}  \nabla_1+\frac{x^4 \alpha_4^2}{4 (\alpha_8-\alpha_9)^2} \nabla_3+ \frac{x^4 \alpha_4^2}{(\alpha_8-\alpha_9)^2}(\nabla_4+  \nabla_8+ \nabla_{10})
    \right\rangle,$\end{center}
    which gives one from the following  two representatives
\begin{center}    
    $\langle 
    \nabla_1+\frac{1}{4} \nabla_3+\nabla_4+\nabla_8+\nabla_{10}
    \rangle$ 
and
    $\langle 
    \frac{1}{4}  \nabla_3+\nabla_4+\nabla_8+\nabla_{10}
    \rangle,$ 
\end{center}
    depending on 
    $4 \alpha_1=(\alpha_6+\alpha_7)^2$ or not. These orbits shall  be jointed with families which were found in the case (\ref{89first}).
    
    \item if $\alpha_8=\alpha_9$ and $\alpha_3\neq\alpha_9^2,$ 
    then for $v=\frac{\alpha_4 x^2}{\sqrt{\alpha_3-\alpha_9^2}}$ and 
    $z=\frac{x ((\alpha_6+\alpha_7) \alpha_9-\alpha_2)}{2 (\alpha_3-\alpha_9^2)},$ we have
    one from the following two representatives
 \begin{center}    
    $\langle 
    \nabla_1+ \nabla_3+\nabla_4+ \nabla_{10}
    \rangle$ 
and
    $\langle 
       \nabla_3+\nabla_4+ \nabla_{10}
    \rangle,$ 
\end{center}   
depending on $\alpha_2^2+\alpha_3 (\alpha_6+\alpha_7)^2=2 \alpha_2 (\alpha_6+\alpha_7) \alpha_9+4 \alpha_1 (\alpha_3-\alpha_9^2)$ or not.

\item  if $\alpha_8=\alpha_9$ and $\alpha_3=\alpha_9^2,$ then we have the representative
\begin{center}
$\left\langle 
   \frac{x (x (4 \alpha_1-(\alpha_6+\alpha_7)^2)+4 z (\alpha_2-(\alpha_6+\alpha_7) \alpha_9))}{4}\nabla_1+ \frac{v^2 (\alpha_2-(\alpha_6+\alpha_7) \alpha_9)}{x \alpha_4}\nabla_2+v^2\nabla_4+ v^2\nabla_{10}
    \right\rangle,$ 
\end{center}
which gives one of the following representatives 
\begin{center}
$\left\langle \nabla_2+ \nabla_4+ \nabla_{10} \right\rangle,$ 
$\left\langle \nabla_1+ \nabla_4+ \nabla_{10} \right\rangle$
or
$\left\langle \nabla_4+ \nabla_{10} \right\rangle,$ 
\end{center}
depending on $\alpha_2=(\alpha_6+\alpha_7) \alpha_9,$ $4 \alpha_1=(\alpha_6+\alpha_7)^2$ or not.

\end{enumerate}

\item if $\alpha_{10}=0$ and $\alpha_9\neq 0,$
 then we can suppose that $\alpha_9=1$ and $\alpha_4\neq 0, \alpha_5=0.$ 
 
 \begin{enumerate}
     \item\label{89sec}if $\alpha_8\neq -1,$ then choosing 
     $x=1,$ 
     $y=0,$ 
     $z=-\frac{ \alpha_6+\alpha_7}{1+\alpha_8},$
     $r=\frac{2  \alpha_3 (\alpha_6+\alpha_7)- \alpha_2 (1+\alpha_8)}{(1+\alpha_8)^2},$
     $t=-\frac{w \alpha_3}{1+\alpha_8},$
     $u=\frac{\alpha_7 \alpha_8-\alpha_6}{1+\alpha_8}$
     and $v= \alpha_4,$  we have two families of representatives 
\begin{center}    
    $\langle 
    \nabla_1+ \nabla_4+\alpha \nabla_8+ \nabla_9
    \rangle_{\alpha \neq-1}$ 
and
    $\langle 
  \nabla_4+\alpha \nabla_8+ \nabla_9
    \rangle_{\alpha \neq-1},$ 
\end{center}   
depending on 
$\alpha_1 (1 + \alpha_8)^2 +  \alpha_3 (\alpha_6 + \alpha_7)^2 = \alpha_2 (\alpha_6 + \alpha_7)(1 + \alpha_8)$ or not.

     \item if $\alpha_8=-1$ and $\alpha_6=-\alpha_7, \alpha_3\neq 0,$ 
     then by choosing 
\begin{center}     $y=0,$
     $z=-\frac{x \alpha_2}{2 \alpha_3},$
     $w=\frac{x^2 \alpha_4}{\alpha_3},$
     $u=\frac{x^2 (2 \alpha_3 \alpha_7-\alpha_2)}{2 \alpha_3}$
     and
     $v=x^2 \alpha_4,$ \end{center}
     we have one of the following representatives
 \begin{center}    
    $\langle 
    \nabla_1+ \nabla_3+\nabla_4- \nabla_8+ \nabla_9
    \rangle$ 
and
    $\langle 
     \nabla_3+\nabla_4- \nabla_8+ \nabla_9
    \rangle,$ 
\end{center}   
depending on 
$4 \alpha_3\alpha_1 =\alpha_2^2$   or not.        

 \item if $\alpha_8=-1$ and $\alpha_6=-\alpha_7, \alpha_3= 0,$ 
     then by choosing 
\begin{center}
    $y=0,$  $z=x(\alpha_4-\alpha_7)$, 
    $u=x (z+x \alpha_7)$ and 
    $v=x (z+x \alpha_7),$
\end{center}
we have one from the following representatives 
\begin{center}
     $\langle 
\nabla_4- \nabla_8+ \nabla_9
    \rangle,$    $\langle 
    \nabla_1+\nabla_4- \nabla_8+ \nabla_9
    \rangle$, 
     $\langle 
    \nabla_2+\nabla_4- \nabla_8+ \nabla_9
    \rangle,$  or $\langle 
    \nabla_1+\nabla_2+\nabla_4- \nabla_8+ \nabla_9
    \rangle,$

\end{center}
depending on $\alpha_1=0, \alpha_2=0$ or not.  The first two orbits will be joint with the families found in the case (\ref{89sec}).

 \item if $\alpha_8=-1$ and $\alpha_6\neq-\alpha_7,$ 
     then by choosing 
     $x=\frac{w}{\alpha_6+\alpha_7},$
     $y=0,$
     $z=0,$
     $r=-\frac{w \alpha_1}{(\alpha_6+\alpha_7)^2},$
     $t=-\frac{w \alpha_2}{\alpha_6+\alpha_7},$
     $u=-\frac{w^2 \alpha_6}{(\alpha_6+\alpha_7)^2},$
     and 
     $v=\frac{w^2 \alpha_4}{(\alpha_6+\alpha_7)^2},$
     we have one from the following representatives
 \begin{center}
     $\langle 
\nabla_3+\nabla_4+\nabla_7- \nabla_8+ \nabla_9
    \rangle$ or
     $\langle 
\nabla_4+\nabla_7- \nabla_8+ \nabla_9
    \rangle,$

\end{center}
depending on $\alpha_3=0$ or not.    
 \end{enumerate}

\item if $\alpha_{10} =\alpha_9= 0$ and $\alpha_8\neq 0,$
 then we can suppose that $\alpha_8=1$ and $\alpha_4\neq 0, \alpha_5=0.$ By choosing 
 $x=1,$
 $y=0,$
 $z=-\alpha_6-\alpha_7,$
 $r=-\alpha_2+2 \alpha_3 (\alpha_6+\alpha_7),$
 $t=-w \alpha_3,$
 $u=\alpha_7$
 and  $v=\alpha_4$, 
 we have one from the following representatives
  \begin{center}
     $\langle 
\nabla_1+\nabla_4+\nabla_8 \rangle$ or
     $\langle 
\nabla_4+\nabla_8     \rangle,$

\end{center}
depending on $\alpha_1+\alpha_3 (\alpha_6+\alpha_7)^2 =\alpha_2(\alpha_6+\alpha_7)$ or not.

\item if $\alpha_{10} =\alpha_9= \alpha_8= 0$ and $\alpha_7\neq 0,$ then we can suppose that $\alpha_4\neq0,\alpha_5=0.$ By choosing 
$y=0$ and $u=\frac{v \alpha_7}{\alpha_4}$ we have $\alpha_5^*=\alpha_7^*=\alpha_8^*=\alpha_9^*=\alpha_{10}^*=0,$ which gives the following case.

\item if $\alpha_{10} =\alpha_9= \alpha_8= \alpha_7=0$ and $\alpha_6\neq 0,$ then we can suppose that $\alpha_6=1$ and $\alpha_4\neq0,\alpha_5=0.$ By choosing 
$y=0,$ $z=0,$ $w=1,$ $r=-\frac{x \alpha_1}{\alpha_6},$ $t=-\frac{\alpha_2}{\alpha_6},$
$u=0$ and $v=\frac{x \alpha_4}{\alpha_6},$ we have one from the following representatives 

 \begin{center}
     $\langle 
\nabla_3+\nabla_4+\nabla_6 \rangle$ or
     $\langle 
\nabla_4+\nabla_6     \rangle,$

\end{center}
depending on $\alpha_3=0$ or not.    
 \end{enumerate}

Consider the orbit $\left\langle \nabla_1+\nabla_2+\nabla_4-\nabla_8+\nabla_9\right\rangle$. Notice that by chossing $x=1$, $y=0$, $z=-1$, $w=1$, $u=-1$ and $v=1$, we obtain the representaive $
    \left\langle \nabla_2+\nabla_4-\nabla_8+\nabla_9\right\rangle$.
    
 Summarizing, we obtain the following distinct orbits:
\begin{center}
$    \left\langle \nabla_1+\alpha\nabla_3+\nabla_4+\nabla_8+\nabla_{10}\right\rangle, 
    \left\langle \alpha\nabla_3+\nabla_4+\nabla_8+\nabla_{10}\right\rangle, 
    \left\langle \nabla_2+\frac{1}{4}\nabla_3+\nabla_4+\nabla_8+\nabla_{10}\right\rangle,  
    $
    \\
$   
    \left\langle \nabla_1+\nabla_3+\nabla_4+\nabla_{10}\right\rangle,  \left\langle \nabla_3+\nabla_4+\nabla_{10}\right\rangle,\left\langle \nabla_2+\nabla_4+\nabla_{10}\right\rangle, 
    \left\langle \nabla_1+\nabla_4+\nabla_{10}\right\rangle,     $ 
    \\
$ 
    \left\langle \nabla_4+\nabla_{10}\right\rangle, 
    \left\langle \nabla_1+\nabla_4+\alpha\nabla_8+\nabla_9\right\rangle,\left\langle \nabla_4+\alpha\nabla_8+\nabla_9\right\rangle,   \left\langle \nabla_1+\nabla_3+\nabla_4-\nabla_8+\nabla_9\right\rangle, 
    $ 
    \\
$ 
    \left\langle \nabla_3+\nabla_4-\nabla_8+\nabla_9\right\rangle, \left\langle \nabla_2+\nabla_4-\nabla_8+\nabla_9\right\rangle,     \left\langle \nabla_3+\nabla_4+\nabla_7-\nabla_8+\nabla_9\right\rangle,
    $ 
    \\
$ 
     \left\langle \nabla_4+\nabla_7-\nabla_8+\nabla_9\right\rangle, \left\langle \nabla_1+\nabla_4+\nabla_8\right\rangle,   \left\langle \nabla_4+\nabla_8\right\rangle,
    $ 
    $\left\langle \nabla_3+\nabla_4+\nabla_6\right\rangle, \left\langle \nabla_4+\nabla_6\right\rangle$
\end{center}

\subsubsection{Central extensions of ${\mathfrak n}_{4}$}
	Let us use the following notations:
\begin{longtable}{lll}
$\nabla_1 =[\Delta_{11}],$& $\nabla_2 = [\Delta_{12}],$& $ \nabla_3 = [\Delta_{22}],$\\
$\nabla_4 = [\Delta_{13}-\Delta_{31}],$& $\nabla_5 = [\Delta_{24}-\Delta_{42}].$ 
\end{longtable}
Take $\theta=\sum\limits_{i=1}^5\alpha_i\nabla_i\in {\rm H_{{\mathbb S }}^2}({\mathfrak n}_{4}).$ 
	The automorphism group of ${\mathfrak n}_{4}$ consists of invertible matrices of the form
\begin{center}	
$	\phi=
	\begin{pmatrix}
	x &  z  &  0 & 0\\
	0 &  y  &  0 & 0\\
	q &  r  &  xy & 0\\
	w &  t  &  -qy & xy^2 \\
	\end{pmatrix}$  
	
	\end{center}
	Since
	$$
	\phi^T\begin{pmatrix}
	\alpha_1 &  \alpha_2 & \alpha_4 & 0\\
	0  &  \alpha_3 & 0 &  \alpha_5 \\
	-\alpha_4&  0 & 0    & 0\\
	0&  -\alpha_5 & 0    & 0\\
	\end{pmatrix} \phi=	\begin{pmatrix}
	\alpha_1^* &  \alpha_2^*-\alpha^* & \alpha_4^* & 0\\
	\alpha^*  &  \alpha_3^* & 0 &  \alpha_5^* \\
	-\alpha_4^*&  0 & 0    & 0\\
	0&  -\alpha_5^* & 0    & 0\\
	\end{pmatrix},
	$$
	 we have that the action of ${\rm Aut} ({\mathfrak n}_{4})$ on the subspace
$\langle \sum\limits_{i=1}^5\alpha_i\nabla_i  \rangle$
is given by
$\langle \sum\limits_{i=1}^5\alpha_i^{*}\nabla_i\rangle,$
where
\begin{longtable}{lll}
$\alpha^*_1=  \alpha_{1}x^2,$ &
$\alpha^*_2= 2 \alpha_{1}xz+\alpha_{2}xy,$ &
$\alpha^*_3=   \alpha_{1} z^2+\alpha_{2} yz +\alpha_{3}y^2,$ \\
$\alpha_4^*=   \alpha_{4}x^2 y, $ &
$\alpha_5^*=  \alpha_{5} x y^3.$ 
\end{longtable}

%\noindent 
%{\bf $\bullet$ $1$-dimensional central extensions of ${\mathcal N}_{01}.$}
Since, we are interested only in new algebras, we have  $ \alpha_5 \neq0.$
Then 

\begin{enumerate}
    \item $\alpha_4\neq0$ and $\alpha_1 \neq 0,$  
    by choosing 
    $x=\frac{\alpha_1^2 \alpha_5}{\alpha_4^3},$
    $y=\frac{\alpha_1}{\alpha_4}$
    and
    $z=-\frac{\alpha_2}{2 \alpha_4},$
    we have the family of representatives 
    $\langle \nabla_1+\alpha \nabla_3+\nabla_4+\nabla_5 \rangle.$
    
    \item $\alpha_4\neq0$ and $\alpha_1=0,$ by choosing $x=\frac{y^2 \alpha_5}{\alpha_4},$
    we have the representative
    \begin{center}
        $\left\langle \frac{y^3 \alpha_2 \alpha_5}{\alpha_4} \nabla_2+y (z \alpha_2+y \alpha_3) \nabla_3+ \frac{y^5 \alpha_5^2}{\alpha_4}( \nabla_4 + \nabla_5) \right\rangle,$
    \end{center}
    which gives the following representatives 
\begin{center}     
$\langle   \nabla_2+  \nabla_4 + \nabla_5 \rangle, \ 
\langle   \nabla_3+  \nabla_4 + \nabla_5 \rangle, \ 
\langle  \nabla_4 + \nabla_5 \rangle,$
    \end{center}
    depending on $\alpha_2=0, \alpha_3=0$ or not.
    
    \item  $\alpha_4=0$ and $\alpha_1\neq0,$ 
    by choosing 
    $x=\frac{y^3 \alpha_5}{\alpha_1}$
    and $x=\frac{y^3 \alpha_5}{\alpha_1},$
    we have the representative 
    \begin{center}
    $\left\langle   4 y^6 \alpha_5^2 \nabla_1-y^2 (\alpha_2^2-4 \alpha_1 \alpha_3) \nabla_3 +  4 y^6 \alpha_5^2\nabla_5 \right\rangle,$
    \end{center}
    which gives 
    \begin{center}
        $\left\langle    \nabla_1+ \nabla_3+ \nabla_5 \right\rangle$
        or
                $\left\langle    \nabla_1+ \nabla_5 \right\rangle,$
    \end{center}
        depending on $\alpha_2^2=4 \alpha_1 \alpha_3$ or not.
        
    \item $\alpha_4=0,$ $\alpha_1=0$ and $\alpha_2\neq 0,$
    by choosing $x=1,$
    $y=\sqrt{\alpha_2 \alpha_5^{-1}}$
    and $z=-\frac{\alpha_3}{\sqrt{\alpha_2  \alpha_5}},$ we have the representative
         $\left\langle    \nabla_2+ \nabla_5 \right\rangle.$
   
   \item $\alpha_4=0,$ $\alpha_1=0$ and $\alpha_2= 0,$ we have the representatives
     \begin{center}
        $\left\langle     \nabla_3+ \nabla_5 \right\rangle$
        or
                $\left\langle     \nabla_5 \right\rangle,$
    \end{center}
        depending on $\alpha_3=0$ or not.
    
    \end{enumerate}

Summarizing, we obtain the following distinct orbits:
\begin{center}
$    \left\langle \nabla_1+\alpha\nabla_3+\nabla_4+\nabla_5\right\rangle, 
    \left\langle \nabla_2+\nabla_4+\nabla_5\right\rangle, 
    \left\langle \nabla_3+\nabla_4+\nabla_5\right\rangle,   \left\langle \nabla_4+\nabla_5\right\rangle$,\\
   $  \left\langle \nabla_1+\nabla_3+\nabla_5\right\rangle, \left\langle \nabla_1+\nabla_5\right\rangle, \left\langle \nabla_2+\nabla_5\right\rangle,\left\langle \nabla_3+\nabla_5\right\rangle,\left\langle \nabla_5\right\rangle.
    $
   
\end{center}

\subsubsection{Classification theorem for $5$-dimensional algebras}\label{secteoB}
Now we are ready summarize all results related to the algebraic classification of complex $5$-dimensional nilpotent symmetric Leibniz   algebras.

\begin{theoremC}

Let ${\mathbb S}$ be a complex $5$-dimensional non-split nilpotent symmetric Leibniz algebra.
Then ${\mathbb S}$ is a $2$-step nilpotent algebra,
or it is isomorphic to one algebra from the following list:

\begin{center}
 
\begin{longtable}{llllllllll}
$\mathbb{S}_{01}$ &$:$& $ e_1e_1=e_4$ & $e_1 e_2 =e_3+e_5$ & $e_2 e_1 =-e_3$ \\&& $e_2e_2=e_5$ & $e_2e_3 =e_4$ & $e_3e_2=-e_4$\\
\hline
$\mathbb{S}_{02}$ &$:$& $ e_1e_1=e_4$ & $e_1 e_2 =e_3$ & $e_2 e_1 =-e_3$ \\&& $e_2e_2=e_5$ & $e_2e_3 =e_4$ & $e_3e_2=-e_4$\\
\hline
$\mathbb{S}_{03}$ &$:$& $ e_1e_1=e_4$ & $e_1 e_2 =e_3+e_5$ & $e_2 e_1 =-e_3$ & $e_2e_3 =e_4$ & $e_3e_2=-e_4$\\
\hline
$\mathbb{S}_{04}$ &$:$& $ e_1e_1=e_5$ & $e_1 e_2 =e_3$ & $e_2 e_1 =-e_3$ & $e_2e_3 =e_4$ & $e_3e_2=-e_4$\\
\hline
$\mathbb{S}_{05}$ &$:$&  $e_1 e_2 =e_3+e_5$ & $e_1e_3=e_5$ & $e_2 e_1 =-e_3$ & $e_2e_2=e_4+e_5$ \\&& $e_2e_3 =e_4$ & $e_3e_1=-e_5$ & $e_3e_2=-e_4$\\
\hline
$\mathbb{S}_{06}$ &$:$& $e_1e_1=e_5$ &  $e_1 e_2 =e_3$  & $e_2 e_1 =-e_3$ \\&& $e_2e_2=e_4+e_5$ & $e_2e_3 =e_4$ & $e_3e_2=-e_4$\\
\hline
$\mathbb{S}_{07}$ &$:$& $e_1e_1=e_5$ &  $e_1 e_2 =e_3$  & $e_2 e_1 =-e_3$ \\&& $e_2e_2=e_4$ & $e_2e_3 =e_4$ & $e_3e_2=-e_4$\\

%\begin{longtable}{llllllllll}

%{\color{blue}$\mathbb{S}_{09}^\alpha$} &$:$& $e_1e_1=\alpha e_5$ &  $e_1 e_2 =e_3+e_4+e_5$  & $e_1e_3=e_5$ & $e_2 e_1 =-e_3$ & $e_2e_3 =e_4$ & $e_3e_1=-e_5$ & $e_3e_2=-e_4$\\
 %   \end{longtable}
%    {\color{blue}
%    \begin{align*}
%   &\langle \nabla_2+\nabla_5,\alpha\nabla_1+\nabla_2+\nabla_4\rangle\to\langle \nabla_2+\nabla_5,\frac{1}{\alpha}\nabla_1+\nabla_2+\nabla_4\rangle\\
%    \end{align*}}
 
\hline
$\mathbb{S}_{08}$ &$:$& $e_1e_1=e_5$ &  $e_1 e_2 =e_3+e_4$  & $e_1e_3=e_5$ & $e_2 e_1 =-e_3$ \\&& $e_2e_3 =e_4$ & $e_3e_1=-e_5$ & $e_3e_2=-e_4$\\
\hline
$\mathbb{S}_{09}$ &$:$& $e_1 e_2 =e_3+e_4$  & $e_1e_3=e_5$ & $e_2 e_1 =-e_3$ \\&& $e_2e_3 =e_4$ & $e_3e_1=-e_5$ & $e_3e_2=-e_4$\\
\hline
$\mathbb{S}_{10}^\alpha$ &$:$& $e_1e_1=e_5$ &  $e_1 e_2 =e_3+e_4+\alpha e_5$  & $e_2e_1 =-e_3$ \\&& $e_2e_2=e_5$ & $e_2e_3 =e_4$ & $e_3e_2=-e_4$\\

%    \end{longtable}
% {\color{blue}
%    \begin{align*}
%   &\langle \nabla_2+\nabla_5,\nabla_1+\alpha\nabla_2+\nabla_3\rangle\to\langle \nabla_2+\nabla_5,\nabla_1-\alpha\nabla_2+\nabla_3\rangle\\
%    \end{align*}}  

%\begin{longtable}{llllllllll}
\hline
$\mathbb{S}_{11}$ &$:$& $e_1e_1=e_5$ &  $e_1 e_2 =e_3+e_4+e_5$  & $e_2 e_1 =-e_3$ & $e_2e_3 =e_4$ & $e_3e_2=-e_4$\\
\hline
$\mathbb{S}_{12}$ &$:$& $e_1 e_2 =e_3+e_4+e_5$  & $e_2 e_1 =-e_3$ & $e_2e_2=e_5$ & $e_2e_3 =e_4$ & $e_3e_2=-e_4$\\
\hline
$\mathbb{S}_{13}$ &$:$& $e_1 e_2 =e_3+e_4$  & $e_2 e_1 =-e_3$ & $e_2e_2=e_5$ & $e_2e_3 =e_4$ & $e_3e_2=-e_4$\\
\hline
$\mathbb{S}_{14}$ &$:$& $e_1 e_2 =e_3+e_5$  & $e_2 e_1 =-e_3$ & $e_2e_3 =e_4$ & $e_3e_2=-e_4$\\
\hline
$\mathbb{S}_{15}$ &$:$& $e_1e_1=e_5$ & $e_1 e_2 =e_3+e_4$  & $e_2 e_1 =-e_3$ & $e_2e_3 =e_4$ & $e_3e_2=-e_4$\\
\hline
$\mathbb{S}_{16}$ &$:$& $e_1e_1=e_5$ & $e_1 e_2 =e_3$  & $e_1e_3=e_5$ & $e_2 e_1 =-e_3$ \\&& $e_2e_2=e_5$ & $e_2e_3 =e_4$ & $e_3e_1=-e_5$ & $e_3e_2=-e_4$\\
\hline
$\mathbb{S}_{17}$ &$:$& $e_1e_1=e_5$ & $e_1 e_2 =e_3$  & $e_1e_3=e_5$ & $e_2 e_1 =-e_3$ \\&& $e_2e_3 =e_4$ & $e_3e_1=-e_5$ & $e_3e_2=-e_4$\\

%$\mathbb{S}_{20}$ &$:$& $e_1 e_2 =e_3+e_5$  & $e_1e_3=e_5$ & $e_2 e_1 =-e_3$ & $e_2e_3 =e_4$ & $e_3e_1=-e_5$ & $e_3e_2=-e_4$\\
\hline
$\mathbb{S}_{18}$ &$:$& $e_1 e_2 =e_3$  & $e_1e_3=e_5$ & $e_2 e_1 =-e_3$ & $e_2e_2=e_5$ \\&& $e_2e_3 =e_4$ & $e_3e_1=-e_5$ & $e_3e_2=-e_4$\\
\hline
$\mathbb{S}_{19}$ &$:$& $e_1 e_2 =e_3$  & $e_1e_3=e_5$ & $e_2 e_1 =-e_3$ \\&& $e_2e_3 =e_4$ & $e_3e_1=-e_5$ & $e_3e_2=-e_4$\\
\hline
$\mathbb{S}_{20}$ &$:$& $e_1e_1=e_5$ & $e_1 e_2 =e_3$  & $e_2 e_1 =-e_3$ \\&& $e_2e_2=e_5$ & $e_2e_3 =e_4$ & $e_3e_2=-e_4$\\

\hline
$\mathbb{S}_{21}^{\alpha,\beta}$ &$:$& $e_1e_1=\alpha e_5$ &  $e_1 e_2 =e_3+e_4+\beta e_5$  & $e_1e_3=e_5$ & $e_2 e_1 =-e_3$ & $e_2e_2=e_5$ \\&& $e_2e_3 =e_4$ & $e_3e_1=-e_5$ & $e_3e_2=-e_4$\\

%%%%%%%%%% N_{03}
\hline
$\mathbb{S}_{22}^{\alpha}$ &$:$& $e_1e_1=e_5$ &  $e_1 e_2 =e_3$  & $e_1e_3=e_5$ & $e_2 e_1 =-e_3$ & $e_2e_2=\alpha e_5$ \\&&  $e_2e_4=e_5$ & $e_3e_1=-e_5$ & $e_4e_4=e_5$\\
\hline
$\mathbb{S}_{23}^{\alpha}$ &$:$&   $e_1 e_2 =e_3$  & $e_1e_3=e_5$ & $e_2 e_1 =-e_3$ & $e_2e_2=\alpha e_5$ \\&& $e_2e_4=e_5$ &   $e_3e_1=-e_5$ & $e_4e_4=e_5$\\
\hline
$\mathbb{S}_{24}$ &$:$& $e_1 e_2 =e_3+e_5$  & $e_1e_3=e_5$ & $e_2 e_1 =-e_3$ & $e_2e_2=\frac{1}{4} e_5$ \\&& $e_2e_4=e_5$    &  $e_3e_1=-e_5$ & $e_4e_4=e_5$\\
\hline
$\mathbb{S}_{25}$ &$:$& $e_1e_1=e_5$ &  $e_1 e_2 =e_3$  & $e_1e_3=e_5$ & $e_2 e_1 =-e_3$ \\&& $e_2e_2=e_5$  & $e_3e_1=-e_5$ & $e_4e_4=e_5$\\
\hline
$\mathbb{S}_{26}$ &$:$& $e_1 e_2 =e_3$  & $e_1e_3=e_5$ & $e_2 e_1 =-e_3$ \\
&& $e_2e_2=e_5$ & $e_3e_1=-e_5$  & $e_4e_4=e_5$\\
\hline
$\mathbb{S}_{27}$ &$:$& $e_1 e_2 =e_3+e_5$  & $e_1e_3=e_5$ & $e_2 e_1 =-e_3$ & $e_3e_1=-e_5$ & $e_4e_4=e_5$\\
  \hline 
$\mathbb{S}_{28}$ &$:$& $e_1e_1=e_5$ &$e_1 e_2 =e_3$  & $e_1e_3=e_5$ \\&&
$e_2 e_1 =-e_3$ & $e_3e_1=-e_5$  & $e_4e_4=e_5$\\
\hline
$\mathbb{S}_{29}$ &$:$& $e_1 e_2 =e_3$  & $e_1e_3=e_5$ & $e_2 e_1 =-e_3$ & $e_3e_1=-e_5$ & $e_4e_4=e_5$\\
\hline
$\mathbb{S}_{30}^{\alpha}$ &$:$& $e_1e_1=e_5$ & $e_1 e_2 =e_3$  & $e_1e_3=e_5$ & $e_2 e_1 =-e_3$ \\&& $e_2e_4=\alpha e_5$  & $e_3e_1=-e_5$ & $e_4e_2=e_5$\\
\hline
$\mathbb{S}_{31}^{\alpha}$ &$:$& $e_1 e_2 =e_3$  & $e_1e_3=e_5$ & $e_2 e_1 =-e_3$ \\&&
$e_2e_4=\alpha e_5$ & $e_3e_1=-e_5$ & $e_4e_2=e_5$\\
\hline
$\mathbb{S}_{32}$ &$:$& $e_1e_1=e_5$ & $e_1 e_2 =e_3$  & $e_1e_3=e_5$ & $e_2 e_1 =-e_3$ \\&& $e_2e_2=e_5$ & $e_2e_4=-e_5$ & $e_3e_1=-e_5$ & $e_4e_2=e_5$\\
\hline
$\mathbb{S}_{33}$ &$:$& $e_1 e_2 =e_3$  & $e_1e_3=e_5$ & $e_2 e_1 =-e_3$ & $e_2e_2=e_5$ \\&& $e_2e_4=-e_5$  & $e_3e_1=-e_5$ & $e_4e_2=e_5$\\
\hline
$\mathbb{S}_{34}$ &$:$& $e_1 e_2 =e_3+e_5$  & $e_1e_3=e_5$ & $e_2 e_1 =-e_3$\\& & $e_2e_4=-e_5$ & $e_3e_1=-e_5$  & $e_4e_2=e_5$\\

%$\mathbb{S}_{35}$ &$:$& $e_1e_1=e_5$ & $e_1 e_2 =e_3+e_5$  & $e_1e_3=e_5$ & $e_2 e_1 =-e_3$ & $e_2e_4=-e_5$ \\&& $e_3e_1=-e_5$ & $e_4e_2=e_5$\\
\hline
$\mathbb{S}_{35}$ &$:$& $e_1 e_2 =e_3$  & $e_1e_3=e_5$ & $e_2 e_1 =-e_3$ & $e_2e_2=e_5$ \\&& $e_2e_4=-e_5$ & $e_3e_1=-e_5$ & $e_4e_1=e_5$ & $e_4e_2=e_5$\\
\hline
$\mathbb{S}_{36}$ &$:$& $e_1 e_2 =e_3$  & $e_1e_3=e_5$ & $e_2 e_1 =-e_3$ & $e_2e_4=-e_5$ \\&& $e_3e_1=-e_5$  & $e_4e_1=e_5$ & $e_4e_2=e_5$\\
\hline
$\mathbb{S}_{37}$ &$:$& $e_1e_1=e_5$ & $e_1 e_2 =e_3$  & $e_1e_3=e_5$ \\&&
$e_2 e_1 =-e_3$ & $e_2e_4=e_5$ & $e_3e_1=-e_5$ \\
\hline
$\mathbb{S}_{38}$ &$:$& $e_1 e_2 =e_3$  & $e_1e_3=e_5$ & $e_2 e_1 =-e_3$ & $e_2e_4=e_5$ & $e_3e_1=-e_5$ \\
\hline
$\mathbb{S}_{39}$ &$:$& $e_1 e_2 =e_3$  & $e_1e_3=e_5$ & $e_1e_4=e_5$\\& & $e_2 e_1 =-e_3$ & $e_2e_2=e_5$   & $e_3e_1=-e_5$ \\
\hline
$\mathbb{S}_{40}$ &$:$& $e_1 e_2 =e_3$  & $e_1e_3=e_5$ & $e_1e_4=e_5$ & $e_2 e_1 =-e_3$ & $e_3e_1=-e_5$ \\

%%%%%%%%%%%%%%% S_{05}
\hline
$\mathbb{S}_{41}^{\alpha}$ &$:$& $e_1e_1=e_5$ & $e_1 e_2 =e_3$  & $e_1e_3=e_5$ & $e_2 e_1 =-e_3$ & $e_2e_2=\alpha e_5$ \\&& $e_2e_3=e_4$ & $e_2e_4=e_5$ &  $e_3e_1=-e_5$ & $e_3e_2=-e_4$ & $e_4e_2=-e_5$\\
\hline
$\mathbb{S}_{42}$ &$:$& $e_1 e_2 =e_3+e_5$  & $e_1e_3=e_5$ & $e_2 e_1 =-e_3$ & $e_2e_3=e_4$ \\&& $e_2e_4=e_5$ &   $e_3e_1=-e_5$ & $e_3e_2=-e_4$ & $e_4e_2=-e_5$\\
\hline
$\mathbb{S}_{43}$ &$:$& $e_1 e_2 =e_3$  & $e_1e_3=e_5$ & $e_2 e_1 =-e_3$ & $e_2e_2=e_5$ & $e_2e_3=e_4$ \\&& $e_2e_4=e_5$ &  $e_3e_1=-e_5$ & $e_3e_2=-e_4$ & $e_4e_2=-e_5$\\
\hline
$\mathbb{S}_{44}$ &$:$& $e_1 e_2 =e_3$  & $e_1e_3=e_5$ & $e_2 e_1 =-e_3$ & $e_2e_3=e_4$ \\&& $e_2e_4=e_5$  &  $e_3e_1=-e_5$ & $e_3e_2=-e_4$ & $e_4e_2=-e_5$\\
\hline
$\mathbb{S}_{45}$ &$:$& $e_1e_1=e_5$ & $e_1 e_2 =e_3$  & $e_2 e_1 =-e_3$ & $e_2e_2=e_5$ \\&& $e_2e_3=e_4$  & $e_2e_4=e_5$ &  $e_3e_2=-e_4$ & $e_4e_2=-e_5$\\
\hline
$\mathbb{S}_{46}$ &$:$& $e_1e_1=e_5$ & $e_1 e_2 =e_3$  & $e_2 e_1 =-e_3$ & $e_2e_3=e_4$ \\&& $e_2e_4=e_5$  &  $e_3e_2=-e_4$ & $e_4e_2=-e_5$\\
\hline
$\mathbb{S}_{47}$ &$:$& $e_1 e_2 =e_3+e_5$  & $e_2 e_1 =-e_3$ & $e_2e_3=e_4$ \\&&
$e_2e_4=e_5$ &  $e_3e_2=-e_4$ & $e_4e_2=-e_5$\\
\hline
$\mathbb{S}_{48}$ &$:$& $e_1 e_2 =e_3$  & $e_2 e_1 =-e_3$ & $e_2e_2=e_5$ & $e_2e_3=e_4$ \\&&
$e_2e_4=e_5$ &  $e_3e_2=-e_4$ & $e_4e_2=-e_5$\\
\hline
$\mathbb{S}_{49}$ &$:$& $e_1 e_2 =e_3$  & $e_2 e_1 =-e_3$ & $e_2e_3=e_4$ \\&&
$e_2e_4=e_5$ &  $e_3e_2=-e_4$ & $e_4e_2=-e_5$\\

    \end{longtable}
    \end{center}

\end{theoremC}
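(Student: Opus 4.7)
The plan is to invoke \cite[Theorem 2.2]{saidsuper}, which asserts that every symmetric Leibniz algebra is a central extension of a Lie algebra by a symmetric Leibniz cocycle. Combined with the annihilator lemma proved earlier in the excerpt, this means that any $5$-dimensional non-split nilpotent symmetric Leibniz algebra $\mathbb{S}$ is isomorphic to ${\bf A}'_{\theta}$ for some nilpotent Lie algebra ${\bf A}'$ of dimension $5-s$ (with $s = \dim\operatorname{Ann}(\mathbb{S}) \geq 1$) and some $\theta \in {\rm Z}^2_{\mathbb{S}}({\bf A}', \mathbb{C}^s)$ with $\operatorname{Ann}({\bf A}')\cap\operatorname{Ann}(\theta)=0$. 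Since $\mathbb{S}$ is assumed to be non-$2$-step nilpotent, the quotient ${\bf A}'$ must itself have non-zero product. Up to isomorphism this forces ${\bf A}' \in \{\mathcal{N}_{01}, \mathfrak{n}_3, \mathfrak{n}_4\}$, namely the $3$-dimensional Heisenberg Lie algebra (extended by $s=2$) and the two $4$-dimensional nilpotent Lie algebras $\mathfrak{n}_3, \mathfrak{n}_4$ (extended by $s=1$).

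First, I would collect the three cohomology spaces ${\rm H}^2_{\mathbb{S}}(\mathcal{N}_{01})$, ${\rm H}^2_{\mathbb{S}}(\mathfrak{n}_3)$, ${\rm H}^2_{\mathbb{S}}(\mathfrak{n}_4)$ already given in the excerpt, as well as the automorphism groups of each quotient. Next, following the Skjelbred--Sund procedure adapted to the symmetric Leibniz setting, I would describe the action of $\operatorname{Aut}({\bf A}')$ on the appropriate Grassmannian: $G_2({\rm H}^2_{\mathbb{S}}(\mathcal{N}_{01}))$ in the first case, and $G_1$ in the other two. In each Grassmannian one restricts to the stratum ${\bf T}_s$ whose representatives have trivial intersection with $\operatorname{Ann}({\bf A}')$, since otherwise the resulting algebra splits off an annihilator summand.

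For the $1$-dimensional extensions of $\mathfrak{n}_3$ and $\mathfrak{n}_4$, the orbit calculation is a fairly direct linear algebra exercise in low dimension, performed by choosing suitable entries of the automorphism matrix to cancel unwanted coefficients, and splitting into subcases based on which cocycle components vanish. These yield the algebras $\mathbb{S}_{16}$--$\mathbb{S}_{49}$. The $2$-dimensional extension of $\mathcal{N}_{01}$ is carried out in two stages: using the orbit classification of $1$-dimensional extensions of $\mathcal{N}_{01}$ in Section~\ref{n01-orb}, one may assume the first generator $\xi_1$ of the $2$-plane lies in a fixed list of five orbit representatives; for each such choice one then orbits out the second generator $\xi_2$ under the stabilizer of $\xi_1$. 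This produces the algebras $\mathbb{S}_{01}$--$\mathbb{S}_{15}$, together with the two-parameter family $\mathfrak{O}(\alpha,\beta)$ that survives as $\mathbb{S}_{21}^{\alpha,\beta}$ and its specializations.

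The main obstacle, unsurprisingly, is the $2$-dimensional extension of $\mathcal{N}_{01}$: the cocycle space is $5$-dimensional, the acting group is $6$-dimensional with intricate linear and quadratic polynomial entries, and one must split into roughly ten families each with multiple subcases. Two delicate points arise. First, many subcases reduce (after a change of basis in the $2$-plane $\langle\xi_1,\xi_2\rangle$) to orbits that have already been obtained from a different choice of $\xi_1$; these coincidences must be systematically detected so that no algebra is counted twice. Second, the surviving two-parameter family $\mathfrak{O}(\alpha,\beta)$ has non-trivial self-equivalences -- one explicitly satisfies $\mathfrak{O}(\alpha,\beta)\cong \mathfrak{O}(\alpha,-\beta)$, and there is a less obvious rational substitution that identifies $\mathfrak{O}(\alpha,\beta)$ with $\mathfrak{O}(\alpha',\beta')$ for certain $(\alpha',\beta')$ parametrized by a root $Q$ of an explicit quadratic. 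Pinning down a fundamental domain for these identifications, and checking that the specializations $\alpha=0$ and $(\alpha,\beta)=(\tfrac12,0)$ genuinely lie outside that domain, is the most delicate calculation. After completing this case analysis and cross-checking via invariants (dimensions of annihilator, derived series, and $\operatorname{Aut}$-orbits of generating cocycles) that algebras obtained from different ${\bf A}'$ are pairwise non-isomorphic, the list in the theorem is exhaustive.
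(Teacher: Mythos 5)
Your proposal follows essentially the same route as the paper: reduce via \cite[Theorem 2.2]{saidsuper} and the annihilator lemma to central extensions of the nilpotent Lie algebras $\mathcal{N}_{01}$, $\mathfrak{n}_3$, $\mathfrak{n}_4$ with non-zero product, then run the Skjelbred--Sund orbit computation, including the two-stage analysis of $G_2({\rm H}^2_{\mathbb S}(\mathcal{N}_{01}))$ with $\xi_1$ taken from the five representatives of Section \ref{n01-orb} and the treatment of the self-equivalences of $\mathfrak{O}(\alpha,\beta)$, exactly as in the text. The only slip is bookkeeping: the $2$-dimensional extensions of $\mathcal{N}_{01}$ account for $\mathbb{S}_{01}$--$\mathbb{S}_{20}$ together with $\mathbb{S}_{21}^{\alpha,\beta}$, while $\mathfrak{n}_3$ and $\mathfrak{n}_4$ yield $\mathbb{S}_{22}$--$\mathbb{S}_{40}$ and $\mathbb{S}_{41}$--$\mathbb{S}_{49}$ respectively, not the split $\mathbb{S}_{01}$--$\mathbb{S}_{15}$ versus $\mathbb{S}_{16}$--$\mathbb{S}_{49}$ that you state.
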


\section{The geometric classification of nilpotent  algebras}

\subsection{Definitions and notation}
Given an $n$-dimensional vector space $\mathbb V$, the set ${\rm Hom}(\mathbb V \otimes \mathbb V,\mathbb V) \cong \mathbb V^* \otimes \mathbb V^* \otimes \mathbb V$
is a vector space of dimension $n^3$. This space has the structure of the affine variety $\mathbb{C}^{n^3}.$ Indeed, let us fix a basis $e_1,\dots,e_n$ of $\mathbb V$. Then any $\mu\in {\rm Hom}(\mathbb V \otimes \mathbb V,\mathbb V)$ is determined by $n^3$ structure constants $c_{ij}^k\in\mathbb{C}$ such that
$\mu(e_i\otimes e_j)=\sum\limits_{k=1}^nc_{ij}^ke_k$. A subset of ${\rm Hom}(\mathbb V \otimes \mathbb V,\mathbb V)$ is {\it Zariski-closed} if it can be defined by a set of polynomial equations in the variables $c_{ij}^k$ ($1\le i,j,k\le n$).

Let $T$ be a set of polynomial identities.
The set of algebra structures on $\mathbb V$ satisfying polynomial identities from $T$ forms a Zariski-closed subset of the variety ${\rm Hom}(\mathbb V \otimes \mathbb V,\mathbb V)$. We denote this subset by $\mathbb{L}(T)$.
The general linear group ${\rm GL}(\mathbb V)$ acts on $\mathbb{L}(T)$ by conjugations:
$$ (g * \mu )(x\otimes y) = g\mu(g^{-1}x\otimes g^{-1}y)$$
for $x,y\in \mathbb V$, $\mu\in \mathbb{L}(T)\subset {\rm Hom}(\mathbb V \otimes\mathbb V, \mathbb V)$ and $g\in {\rm GL}(\mathbb V)$.
Thus, $\mathbb{L}(T)$ is decomposed into ${\rm GL}(\mathbb V)$-orbits that correspond to the isomorphism classes of algebras.
Let $O(\mu)$ denote the orbit of $\mu\in\mathbb{L}(T)$ under the action of ${\rm GL}(\mathbb V)$ and $\overline{O(\mu)}$ denote the Zariski closure of $O(\mu)$.

Let $\mathcal A$ and $\mathcal B$ be two $n$-dimensional algebras satisfying the identities from $T$, and let $\mu,\lambda \in \mathbb{L}(T)$ represent $\mathcal A$ and $\mathcal B$, respectively.
We say that $\mathcal A$ degenerates to $\mathcal B$ and write $\mathcal A\to \mathcal B$ if $\lambda\in\overline{O(\mu)}$.
Note that in this case we have $\overline{O(\lambda)}\subset\overline{O(\mu)}$. Hence, the definition of a degeneration does not depend on the choice of $\mu$ and $\lambda$. If $\mathcal A\not\cong \mathcal B$, then the assertion $\mathcal A\to \mathcal B$ is called a {\it proper degeneration}. We write $\mathcal A\not\to \mathcal B$ if $\lambda\not\in\overline{O(\mu)}$.

Let $\mathcal A$ be represented by $\mu\in\mathbb{L}(T)$. Then  $\mathcal A$ is  {\it rigid} in $\mathbb{L}(T)$ if $O(\mu)$ is an open subset of $\mathbb{L}(T)$.
 Recall that a subset of a variety is called irreducible if it cannot be represented as a union of two non-trivial closed subsets.
 A maximal irreducible closed subset of a variety is called an {\it irreducible component}.
It is well known that any affine variety can be represented as a finite union of its irreducible components in a unique way.
The algebra $\mathcal A$ is rigid in $\mathbb{L}(T)$ if and only if $\overline{O(\mu)}$ is an irreducible component of $\mathbb{L}(T)$.

Given the spaces $U$ and $W$, we write simply $U>W$ instead of $\dim \,U>\dim \,W$.

%We use also the notation $U\circ W=UW+WU$.

%We collect the required information about the algebras under consideration in Table 1. In this table in the first column we write the names of the algebras. For every $i$ the symbols ${\mathfrak N}_i$ and ${\mathfrak N}_i^{\mathbb{C}}$ stand for Leibniz-Zinbiel algebras;  $\mathfrak{L}_i$ and $\mathfrak{L}_i^{\mathbb{C}}$ stand for nilpotent Leibniz algebras,  $\mathfrak{Z}_i$ and $\mathfrak{Z}_i^{\mathbb{C}}$ stand for Zinbiel algebras. In the second column we give the names of these algebras from the articles \cite{rom,AKO10,alb06,dzhuma5}. In the third  column we give the multiplication tables in a fixed basis $e_1,\ldots,e_n$ of $V$. All the products of basis elemmaents omitted in the table are equal to zero. In the last columns we give dimensions of the algebra of derivations, of the square of an algebra, of the left annihilator, of the right annihilator, %of the anticommutative centre; %of a maximal zero subalgebra; %of the simmetrized algebra; and the center.

\subsection{Method of the description of  degenerations of algebras}

In the present work we use the methods applied to Lie algebras in \cite{BC99,GRH,GRH2,S90}.
First of all, if $\mathcal A\to \mathcal B$ and $\mathcal A\not\cong \mathcal B$, then $\mathfrak{Der}(\mathcal A)<\mathfrak{Der}(\mathcal B)$, where $\mathfrak{Der}(\mathcal A)$ is the Lie algebra of derivations of $\mathcal A$. We compute the dimensions of algebras of derivations and check the assertion $\mathcal A\to \mathcal B$ only for such $\mathcal A$ and $\mathcal B$ that $\mathfrak{Der}(\mathcal A)<\mathfrak{Der}(\mathcal B)$.
%Secondly, if $A\to C$ and $C\to B$ then $A\to B$. If there is no $C$ such that $A\to C$ and $C\to B$ are proper degenerations, then the assertion $A\to B$ is called a {\it primary degeneration}. If $Der(A)<Der(B)$ and there are no $C$ and $D$ such that $C\to A$, $B\to D$, $C\not\to D$ and one of the assertions $C\to A$ and $B\to D$ is a proper degeneration,  then the assertion $A \not\to B$ is called a {\it primary non-degeneration}. It suffices to prove only primary degenerations and non-degenerations to describe degenerations in the variety under consideration. It is easy to see that any algebra degenerates to the algebra with zero multiplication. From now on we use this fact without mentioning it.

%Degenerations of four dimensional and nilpotent five and six dimensional Lie algebras were described in \cite{BC99,S90,GRH}.
%Since the set $\mathbb{L}(T)$ is closed for any $T$, a Lie algebra cannot degenerate to a non-Lie algebra.
%So when we want to add Malcev or BL algebras to Lie algebras we don't have to check the degenerations from Lie algebras to any of the added algebras.

To prove degenerations, we construct families of matrices parametrized by $t$. Namely, let $\mathcal A$ and $\mathcal B$ be two algebras represented by the structures $\mu$ and $\lambda$ from $\mathbb{L}(T)$ respectively. Let $e_1,\dots, e_n$ be a basis of $\mathbb  V$ and $c_{ij}^k$ ($1\le i,j,k\le n$) be the structure constants of $\lambda$ in this basis. If there exist $a_i^j(t)\in\mathbb{C}$ ($1\le i,j\le n$, $t\in\mathbb{C}^*$) such that $E_i^t=\sum\limits_{j=1}^na_i^j(t)e_j$ ($1\le i\le n$) form a basis of $\mathbb V$ for any $t\in\mathbb{C}^*$, and the structure constants of $\mu$ in the basis $E_1^t,\dots, E_n^t$ are such rational functions $c_{ij}^k(t)\in\mathbb{C}[t]$ that $c_{ij}^k(0)=c_{ij}^k$, then $\mathcal A\to \mathcal B$.
In this case  $E_1^t,\dots, E_n^t$ is called a {\it parametrized basis} for $\mathcal A\to \mathcal B$.
To simplify our equations, we will use the notation $A_i=\langle e_i,\dots,e_n\rangle,\ i=1,\ldots,n$ and write simply $A_pA_q\subset A_r$ instead of $c_{ij}^k=0$ ($i\geq p$, $j\geq q$, $k< r$).

%If the number of orbits under the action of $GL(\mathbb V)$ on  $\mathbb{L}(T)$ is finite, then the constructions of some %degenerations and some non-degenerations give the description of all rigid algebras and irreducible components.
Since the variety of $4$-dimensional nilpotent weakly associative (and $5$-dimensional nilpotent symmetric Leibniz) algebras  contains infinitely many non-isomorphic algebras, we have to do some additional work.
Let $\mathcal A(*):=\{\mathcal A(\alpha)\}_{\alpha\in I}$ be a series of algebras, and let $\mathcal B$ be another algebra. Suppose that for $\alpha\in I$, $\mathcal A(\alpha)$ is represented by the structure $\mu(\alpha)\in\mathbb{L}(T)$ and $B\in\mathbb{L}(T)$ is represented by the structure $\lambda$. Then we say that $\mathcal A(*)\to \mathcal B$ if $\lambda\in\overline{\{O(\mu(\alpha))\}_{\alpha\in I}}$, and $\mathcal A(*)\not\to \mathcal B$ if $\lambda\not\in\overline{\{O(\mu(\alpha))\}_{\alpha\in I}}$.

Let $\mathcal A(*)$, $\mathcal B$, $\mu(\alpha)$ ($\alpha\in I$) and $\lambda$ be as above. To prove $\mathcal A(*)\to \mathcal B$ it is enough to construct a family of pairs $(f(t), g(t))$ parametrized by $t\in\mathbb{C}^*$, where $f(t)\in I$ and $g(t)\in {\rm GL}(\mathbb V)$. Namely, let $e_1,\dots, e_n$ be a basis of $\mathbb V$ and $c_{ij}^k$ ($1\le i,j,k\le n$) be the structure constants of $\lambda$ in this basis. If we construct $a_i^j:\mathbb{C}^*\to \mathbb{C}$ ($1\le i,j\le n$) and $f: \mathbb{C}^* \to I$ such that $E_i^t=\sum\limits_{j=1}^na_i^j(t)e_j$ ($1\le i\le n$) form a basis of $\mathbb V$ for any  $t\in\mathbb{C}^*$, and the structure constants of $\mu_{f(t)}$ in the basis $E_1^t,\dots, E_n^t$ are such rational functions $c_{ij}^k(t)\in\mathbb{C}[t]$ that $c_{ij}^k(0)=c_{ij}^k$, then $\mathcal A(*)\to \mathcal B$. In this case  $E_1^t,\dots, E_n^t$ and $f(t)$ are called a parametrized basis and a {\it parametrized index} for $\mathcal A(*)\to \mathcal B$, respectively.

We now explain how to prove $\mathcal A(*)\not\to\mathcal  B$.
Note that if $\mathfrak{Der} \ \mathcal A(\alpha)  > \mathfrak{Der} \  \mathcal B$ for all $\alpha\in I$ then $\mathcal A(*)\not\to\mathcal B$.
One can also use the following  Lemma, whose proof is the same as the proof of Lemma 1.5 from \cite{GRH}.

\begin{lemma}\label{gmain}
Let $\mathfrak{B}$ be a Borel subgroup of ${\rm GL}(\mathbb V)$ and $\mathcal{R}\subset \mathbb{L}(T)$ be a $\mathfrak{B}$-stable closed subset.
If $\mathcal A(*) \to \mathcal B$ and for any $\alpha\in I$ the algebra $\mathcal A(\alpha)$ can be represented by a structure $\mu(\alpha)\in\mathcal{R}$, then there is $\lambda\in \mathcal{R}$ representing $\mathcal B$.
\end{lemma}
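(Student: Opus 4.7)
The plan is to follow the Grunewald--O'Halloran strategy referenced in \cite{GRH} and deduce the result from the completeness of the flag variety ${\rm GL}(\mathbb V)/\mathfrak{B}$. The key intermediate claim to establish is that, whenever $\mathcal{R}\subset \mathbb{L}(T)$ is a $\mathfrak{B}$-stable closed subset, the orbit sweep
$${\rm GL}(\mathbb V)\cdot \mathcal{R} \;=\; \bigcup_{g\in {\rm GL}(\mathbb V)} g\cdot \mathcal{R}$$
is itself closed in $\mathbb{L}(T)$. Once this is in hand, the lemma is automatic: each $\mu(\alpha)$ belongs to $\mathcal{R}$, so all orbits $O(\mu(\alpha))$ are contained in ${\rm GL}(\mathbb V)\cdot \mathcal{R}$, hence so is their common Zariski closure; therefore the chosen representative $\lambda$ of $\mathcal B$ can be written as $g\cdot \mu$ for some $g\in {\rm GL}(\mathbb V)$ and some $\mu\in \mathcal{R}$, and then $g^{-1}\cdot \lambda\in \mathcal{R}$ is the desired representative of $\mathcal B$ living inside $\mathcal{R}$.

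To prove closedness of the sweep, I would form the associated fiber bundle $E := {\rm GL}(\mathbb V)\times^{\mathfrak{B}}\mathcal{R}$, where $\mathfrak{B}$ acts on ${\rm GL}(\mathbb V)\times\mathcal{R}$ by $b\cdot(g,\mu) = (gb^{-1},\, b\cdot\mu)$. This quotient exists as an algebraic variety precisely because $\mathcal{R}$ is $\mathfrak{B}$-stable, and it fibers over ${\rm GL}(\mathbb V)/\mathfrak{B}$ with fiber $\mathcal{R}$. The ${\rm GL}(\mathbb V)$-equivariant multiplication morphism $\Phi\colon E\to \mathbb{L}(T)$, $[g,\mu]\mapsto g\cdot\mu$, is proper because its source fibers over the projective (hence complete) base ${\rm GL}(\mathbb V)/\mathfrak{B}$ with closed fiber $\mathcal{R}$; hence its image, which is exactly ${\rm GL}(\mathbb V)\cdot \mathcal{R}$, is closed in $\mathbb{L}(T)$.

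The main technical obstacle is the properness of $\Phi$, and this is precisely where the $\mathfrak{B}$-stability hypothesis pays off, via the projectivity of the flag variety. A more hands-on alternative in the spirit of \cite{GRH} is to work directly with one-parameter curves: given a family $g(t)\cdot \mu(\alpha(t))\to \lambda$ with $\mu(\alpha(t))\in\mathcal{R}$, one uses a Bruhat-type factorization in ${\rm GL}(\mathbb V)$ to split $g(t)$ into a bounded factor (a class in the complete quotient ${\rm GL}(\mathbb V)/\mathfrak{B}$) and a factor in $\mathfrak{B}$ that can be absorbed into the point of $\mathcal{R}$ using $\mathfrak{B}$-stability; then closedness of $\mathcal{R}$ forces the limit to remain, up to the bounded factor, in $\mathcal{R}$. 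Either way, the crucial ingredient is the same principle, that a closed $\mathfrak{B}$-stable subset of a ${\rm GL}(\mathbb V)$-variety sweeps out a closed subset under the full group action.
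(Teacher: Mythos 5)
Your argument is correct and is essentially the proof the paper relies on: the paper simply cites Lemma 1.5 of Grunewald--O'Halloran, whose proof is exactly the observation that a closed $\mathfrak{B}$-stable subset sweeps out a closed set under ${\rm GL}(\mathbb V)$ because the flag variety ${\rm GL}(\mathbb V)/\mathfrak{B}$ is complete, followed by the same translation step $\lambda = g\cdot\mu \Rightarrow g^{-1}\cdot\lambda\in\mathcal{R}$. The only cosmetic point is that properness of $\Phi$ is cleanest via the closed embedding $[g,\mu]\mapsto(g\mathfrak{B},\,g\cdot\mu)$ of ${\rm GL}(\mathbb V)\times^{\mathfrak{B}}\mathcal{R}$ into $({\rm GL}(\mathbb V)/\mathfrak{B})\times\mathbb{L}(T)$ followed by the (closed) projection, rather than by appealing only to the fibration structure.
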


\subsection{The geometric classification of $4$-dimensional 
 nilpotent weakly associative and symmetric Leibniz algebras }
The main result of the present section is the following theorem.

\begin{theoremD}\label{geobl}
The variety of $4$-dimensional nilpotent symmetric Leibniz algebras has 
dimension {\it 11 }  and it has 
one rigid algebra $\mathcal{S}_{01}$ and three  irreducible components
defined by 
\begin{center}$\mathcal{C}_1=\overline{\{\mathcal{O}(\mathfrak{N}_2(\alpha))\}}$, $\mathcal{C}_2=\overline{\{\mathcal{O}(\mathfrak{N}_3(\alpha))\}}$ and $\mathcal{C}_3=\overline{\mathcal{O}(\mathcal{S}_{01})}$.
\end{center}
\medskip

The variety of $4$-dimensional nilpotent weakly associative   algebras  has 
dimension $16$  and it has
one rigid algebra $\mathcal{S}_{01}$ and 
$3$ irreducible components
defined by  
\begin{center}$\mathcal{C}_1=\overline{\{\mathcal{O}( \mathcal{C}_{19}^{\alpha})\}}$, 
$\mathcal{C}_2=\overline{\mathcal{O}({\mathcal W}_{06}^{\alpha})}$ \  and 
$\mathcal{C}_3=\overline{\mathcal{O}(\mathcal{S}_{01})}.$ 
%$\mathcal{C}_4=\overline{\{\mathcal{O}(\mathfrak{N}_2(\alpha))\}}.$ 
\end{center}
\end{theoremD}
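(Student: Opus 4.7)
The plan is to apply the standard machinery of \cite{BC99,GRH,GRH2,S90} built on Lemma \ref{gmain}, starting from the full list of isomorphism classes already produced by Theorem A together with the description of $4$-dimensional $2$-step nilpotent algebras in \cite{kppv,ikp20}. The first step is to compute $\dim\mathfrak{Der}(\mathcal{A})$ for every algebra (including for generic $\alpha$ in each one-parameter family), from which $\dim O(\mathcal{A})=16-\dim\mathfrak{Der}(\mathcal{A})$ since $\dim \mathrm{GL}_4=16$. The candidates for irreducible components are then the algebras (or families) whose orbit closures reach the variety's expected dimension: $\mathcal{S}_{01}$ will turn out to be rigid because $\dim\mathfrak{Der}(\mathcal{S}_{01})=5$ yields $\dim O(\mathcal{S}_{01})=11$ (the full dimension in the symmetric Leibniz case); the families $\mathfrak{N}_2(\alpha),\mathfrak{N}_3(\alpha)$ and (in the weakly associative variety) $\mathcal{C}_{19}^{\alpha},\mathcal{W}_{06}^{\alpha}$ will each contribute an irreducible closure of dimension $\dim O(\mathcal{A}(\alpha))+1$, matching the claimed dimensions $11$ and $16$ respectively.

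Once the candidates are identified, I would exhibit explicit degenerations $\mathcal{A}\to\mathcal{B}$ from every remaining algebra into one of the three candidate closures. For each such pair (checked in advance to satisfy the necessary condition $\dim\mathfrak{Der}(\mathcal{A})<\dim\mathfrak{Der}(\mathcal{B})$), I would write down a parametrized basis $E_i^t=\sum_{j}a_i^j(t)e_j$ and, in the case of a parametric family, a parametrized index $f(t)\in I$, so that the structure constants of $\mathcal{A}(f(t))$ in that basis specialize at $t=0$ to those of $\mathcal{B}$. These constructions are organised into a single degeneration table; individual entries are routine linear algebra, but the table is what certifies that the three closures $\overline{\{\mathcal{O}(\mathfrak{N}_2(\alpha))\}}$, $\overline{\{\mathcal{O}(\mathfrak{N}_3(\alpha))\}}$, $\overline{\mathcal{O}(\mathcal{S}_{01})}$ (respectively $\overline{\{\mathcal{O}(\mathcal{C}_{19}^{\alpha})\}}$, $\overline{\mathcal{O}(\mathcal{W}_{06}^{\alpha})}$, $\overline{\mathcal{O}(\mathcal{S}_{01})}$) cover the entire variety.

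The harder half is to prove that the three components are genuinely distinct, i.e.\ that no candidate lies in the closure of another, and that each parametric family really contributes an irreducible closure of dimension one greater than its generic orbit. Most pairwise non-degenerations are immediately ruled out by the $\mathfrak{Der}$-dimension inequality, but the remaining delicate cases force the use of Lemma \ref{gmain}: for each obstructed pair $\mathcal{A}(\ast)\not\to\mathcal{B}$ one must construct a Borel-stable Zariski-closed subset $\mathcal{R}\subset\mathbb{L}(T)$ that contains every $\mu(\alpha)$ but no structure representing $\mathcal{B}$. The standard source of such $\mathcal{R}$'s is a combination of closed conditions like $\dim\mathrm{Ann}(\mathcal{A})\ge k$, $\dim\mathcal{A}^2\le m$, vanishing of prescribed polynomial invariants in the structure constants on upper-triangular bases, or rank conditions on the left/right multiplication operators. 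The main obstacle will be to find such $\mathcal{R}$'s that simultaneously separate $\mathcal{S}_{01}$ from both parametric families, and that separate $\mathfrak{N}_2(\ast)$ from $\mathfrak{N}_3(\ast)$ (resp.\ $\mathcal{C}_{19}^{\ast}$ from $\mathcal{W}_{06}^{\ast}$); producing these invariants is where essentially all of the nontrivial work lies. Once they are in hand, the dimension counts $11$ and $16$ are read off as the maxima of the three component dimensions, and Theorem D is proved.
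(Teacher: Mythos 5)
Your plan is the standard Grunewald--O'Halloran machinery (derivation counts, degeneration tables, Borel-stable closed sets via Lemma \ref{gmain}) and would in principle prove Theorem D, but it is not the route the paper actually takes, and it schedules far more work than is needed. For the symmetric Leibniz half the paper does essentially no new geometry: it identifies $\mathcal{S}_{01},\dots,\mathcal{S}_{05}$ with the algebras $\mathcal{L}_{11},\mathcal{L}_{10},\mathcal{L}_{9},\mathcal{L}_{12},\mathcal{L}_{1}$ from \cite{kppv}, where the irreducible components of the variety of $4$-dimensional nilpotent Leibniz algebras are already known, and simply reads off that the components are $\overline{\{\mathcal{O}(\mathfrak{N}_2(\alpha))\}}$, $\overline{\{\mathcal{O}(\mathfrak{N}_3(\alpha))\}}$ and $\overline{\mathcal{O}(\mathcal{S}_{01})}$; likewise $\overline{\{\mathcal{O}(\mathcal{C}_{19}^{\alpha})\}}$ is imported wholesale from \cite{fkkv}. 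The only fresh content is in the weakly associative half: a degeneration table showing that $\mathcal{W}_{06}^{\alpha}$ dominates every remaining non-commutative algebra --- including $\mathfrak{N}_2(\alpha)$ and $\mathfrak{N}_3(\alpha)$, which therefore do \emph{not} survive as components of the larger variety --- together with the count $\dim\mathcal{O}(\mathcal{W}_{06}^{\alpha})=15$ and a single closed separating set $\{A_1^2\subseteq A_3,\ A_1^3\subseteq A_4,\ c_{12}^3=c_{21}^3\}$ showing $\mathcal{S}_{01}\notin\overline{\mathcal{O}(\mathcal{W}_{06}^{\alpha})}$ (commutativity, a closed condition, separates it from $\mathcal{C}_{19}^{\alpha}$). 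So where you anticipate having to manufacture invariants separating $\mathfrak{N}_2(\ast)$ from $\mathfrak{N}_3(\ast)$ and redoing the whole nilpotent-Leibniz analysis, the paper never has to. One caution about your sketch: inferring rigidity of $\mathcal{S}_{01}$ from ``its orbit dimension equals the dimension of the variety'' is circular, since the variety's dimension is only known once the components are; the non-circular argument is the one you state second, namely exhibiting a closed set containing the parametric families but not $\mathcal{S}_{01}$, so that $\mathcal{S}_{01}$ lies in no other candidate's closure.
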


\begin{Proof}
The description of all irreducible components  of $4$-dimensional Zinbiel and nilpotent Leibniz algebras was given in \cite{kppv}.
Using the cited result, and noting that $\mathcal{S}_{01}\simeq\mathcal{L}_{11}$, $\mathcal{S}_{02}\simeq\mathcal{L}_{10}$, $\mathcal{S}_{03}\simeq\mathcal{L}_{9}$, $\mathcal{S}_{04}\simeq\mathcal{L}_{12}$ and $\mathcal{S}_{05}\simeq\mathcal{L}_{1}$, we can see that the variety of $4$-dimensional symmetric Leibniz algebras has three irreducible components given by the following two families of algebras:

\begin{longtable}{lllllllll}
${\mathfrak N}_2(\alpha)$  & $:$ & $e_1e_1 = e_3$ & $e_1e_2 = e_4$ & $e_2e_1 = -\alpha e_3$ & $e_2e_2 = -e_4$ \\
\hline
${\mathfrak N}_3(\alpha)$ & $:$ & $e_1e_1 = e_4$ & $e_1e_2 = \alpha e_4$ & $e_2e_1 = -\alpha e_4$ & $e_2e_2 = e_4$ & $e_3e_3=e_4$
\end{longtable}
and the rigid algebra
\begin{longtable}{lllllllll}
${\mathcal S}_{01}$  & $:$ & $e_1e_1 = e_4$ & $e_1e_2 = e_3$ & $e_2e_1 = -e_3$ & $e_2e_2 = e_4$ & $e_2e_3 = e_4$ & $e_3e_2 = -e_4$ \\
\end{longtable}

Now we can prove that the variety of $4$-dimensional nilpotent weakly associative algebras has $4$ irreducible components (for the algebraic classification, see Theorem A). The list of all necessary degenerations is given below:

\begin{longtable}{|lcl|ll|}
 
\hline

${\mathcal W}^{ \frac{1}{4\alpha^2}}_{06}$&$\to$&$\mathfrak{N}_{3}(\alpha)$ & 
$E_1^t=te_1$ & $E_2^t=\frac{t}{2\alpha}e_2$ \\
&&&$E_3^t=\frac{t}{2\alpha}e_3+\frac{t}{4\alpha}e_4$& $ E_4^t=\frac{t^2}{4\alpha^2}e_4$ 
\\

\hline
${\mathcal W}_{04} $&$\to$&$\mathcal{W}_{01}$ & 
$E_1^t=\sqrt{t}e_1+e_2$ & $E_2^t=te_2+e_4$ \\
&&&$E_3^t=\sqrt{t}e_3$& $ E_4^t=te_4$ \\
\hline

${\mathcal W}_{01}$&$\to$&$\mathcal{W}_{02}$ & 
$E_1^t=te_1$ & $E_2^t=t^2e_2$ \\
&&&$E_3^t=t^{2}e_3$& $ E_4^t=t^3e_4$ \\
\hline

${\mathcal W}_{04}$&$\to$&$\mathcal{W}_{03}^{\alpha}$ & 
$E_1^t=\sqrt{t}e_1+\alpha \sqrt{t}e_3$ & $E_2^t=te_2+t\alpha(\alpha+1)e_4$ \\
&&&$E_3^t=\sqrt{t^3}e_3$& $ E_4^t=t^2e_4$ \\
\hline

${\mathcal W}^{\frac{1-t}{t^2}}_{06}$&$\to$&$\mathcal{W}_{04}$ & 
$E_1^t=\frac{t}{2}e_1+\frac{1}{2}e_2$ & $E_2^t=\frac{t}{2}e_3+\frac{1}{2}e_4$ \\
&&&$E_3^t=\frac{t}{2}e_2$& $ E_4^t=\frac{t^2}{4}e_4$ 
\\
 
\hline
${\mathcal W}_{04}$&$\to$&$\mathcal{W}_{05}$ & 
$E_1^t=\sqrt{t^2-1}e_1$ & $E_2^t=(t^2-1)e_2$ \\
&&&$E_3^t=\frac{t^2-1}{t}e_2+\frac{\sqrt{(t^2-1)^3}}{t}e_3$& $ E_4^t=\frac{(t^2-1)^2}{t}e_4$ \\
\hline

${\mathcal W}^{t^2}_{06}$&$\to$&$\mathcal{W}_{07}$ & 
$E_1^t=t^{-1} e_1$ & $E_2^t=te_2$ \\
&&&$E_3^t=e_3$& $ E_4^t=e_4$ 
\\ 
\hline

%${\mathcal W}_{10}$&$\to$&$\mathcal{W}_{08}$ & 
%$E_1^t=e_1$ & $E_2^t=te_1+e_2$ &$E_3^t=e_3$& $ E_4^t=e_4$ \\
%\hline

  ${\mathcal W}_{06}^{-t^{-2}} $  &$\to$&$\mathcal{W}_{08}$ & 
$E_1^t=te_1+e_3$ & $E_2^t=te_2$ \\
&&&$E_3^t=t^2 e_3$& $ E_4^t=t^2 e_4$   \\
\hline

${\mathcal W}^{-t^{-2}}_{06}$&$\to$&$\mathcal{W}_{09}$ & 
$E_1^t=ie_2-e_3$ & $E_2^t=te_1-e_3$ \\
&&&$E_3^t=ite_3+(1+it)e_4$& $ E_4^t=-ite_4$ 
\\ 
 \hline

${\mathcal W}_{09}$&$\to$&$\mathcal{W}_{10}$ & 
$E_1^t=e_1$ & $E_2^t=te_2$ \\
&&&$E_3^t=te_3$& $ E_4^t=te_4$ \\
\hline
 
${\mathcal W}_{07}$&$\to$&$\mathcal{W}_{11}$ & 
$E_1^t=te_1$ & $E_2^t=t^{-1}e_2$\\
&&&$E_3^t=e_3$& $ E_4^t=e_4$ \\

 \hline

%${\mathcal W}_{06}^{-\frac{1}{4\alpha}}$&$\to$&$\mathfrak{N}_{2}(\alpha)$ & 
%$E_1^t=-2ae_1-\frac{2a(b-a)}{b}e_2$ & $E_2^t=be_1+ae_2$ \\
%&&&$E_3^t=\frac{2ab}{\alpha}e_3+\frac{2a(b-a)}{\alpha}e_4$& $ E_4^t=-2abe_3-2b^2e_4$ \\
%$4a(b-a)\alpha$ & $=$ & $1$ &&&&\\
%$t(b-2a)$ & $=$ &  $2ab^2$  &&&&\\

 %\hline

 ${\mathcal W}_{06}^
 {\frac{1 - 2 T}{4 T^2}}$&$\to$&$\mathfrak{N}_{2}(\alpha)$ & 
$E_1^t= \frac{t}{T-1}e_1 + \frac{2T-1}{2T(T-1)}e_2$ &
$E_2^t= -\frac{Tt}{T-1}e_1 - \frac{t}{2(T-1)}e_2$\\
\multicolumn{3}{|l|}{$T=\alpha + \sqrt{(\alpha-1) \alpha}$
}&
$E_3^t= -\frac{t^2}{(\alpha-1)T}e_3 + \frac{(2T-1)t^2}{2(\alpha-1)T^2}e_4$&

$E_4^t= -\frac{\alpha t^2}{(\alpha-1)T}e_3 - \frac{\alpha t^2}{2(\alpha-1)T^2}e_4$\\

 \hline

\end{longtable}

Recall that the variety of $4$-dimensional nilpotent commutative algebras has one irreducible component of dimension $16$ (see, \cite{fkkv}), defined by the following family of algebras:
\begin{longtable}{llllllllllllllllll}
 $ \mathcal{C}_{19}^{\alpha} $ & $:$&   $  e_1 e_1 = e_2$ &$  e_1e_3=\alpha e_4$  &$  e_2 e_2=e_3$  &$ e_2e_3= e_4$  &$  e_3e_3=e_4$  
\end{longtable}
It is easy to see that   $\dim \mathcal{O}({\mathcal W}_{06}^{\alpha})=15$ and it gives an irreducible component because all other nilpotent non-commutative weakly associative algebras have a smaller dimension of orbit closure.
Hence, the commutative family $\mathcal{C}_{19}^{\alpha}$ gives the second irreducible component in the variety of nilpotent weakly associative algebras.
The Algebra   ${\mathcal S}_{01}$  is not in the orbit closure of  ${\mathcal W}_{06}^{\alpha}$ since it does not satisfy the following 
condition $\{ A_1^2 \subseteq A_3, \ A_1^3 \subseteq A_4, \ c_{12}^3=c_{21}^3 \}.$ 
Hence, it gives a  irreducible component.

%One can easily compute that the

%${\mathcal W}_{06}^{\alpha\neq0}$
 %\begin{longtable}{lllll} $\mathfrak{Der} \ {\mathcal W}_{06}^{\alpha\neq0}=2$
 %& $\mathfrak{Der} \ {\mathcal W}_{04}=3$ & $\mathfrak{Der} \ {\mathcal W}_{07}=3$ & $\mathfrak{Der} \ {\mathcal W}_{09}=3$ & $\mathfrak{Der} \ {\mathcal W}_{03}^{\alpha\neq-1}=4$ & $\mathfrak{Der} \ {\mathcal W}_{05}=4$ & $\mathfrak{Der} \ {\mathcal W}_{11}=4$ 

%\end{longtable}

\end{Proof}

\subsection{The geometric classification of $4$-dimensional 
 symmetric Leibniz algebras 
 and conjectures about nilpotent algebras}
The main result of the present section is the following theorem and the corollary from it.
\begin{theoremE}\label{geo2}
The variety of $4$-dimensional  symmetric Leibniz     algebras  has   
dimension $13$  and it has 
one rigid algebra ${\mathfrak L}_{02}$ and
five irreducible components
defined by  

\begin{center}
$\mathcal{C}_1=\overline{\{\mathcal{O}(\mathfrak{N}_2(\alpha))\}},$ \ $\mathcal{C}_2=\overline{\{\mathcal{O}(\mathfrak{N}_3(\alpha))\}},$ \
$\mathcal{C}_3=\overline{ \mathcal{O}(\mathfrak{L}_{02}) },$ \  $\mathcal{C}_4=\overline{\{\mathcal{O}(\mathfrak{L}_{15}^\alpha)\}}$   and 
$\mathcal{C}_5=\overline{\{\mathcal{O}(\mathfrak{L}_{24}^\alpha)\}}.$  

\end{center}

 \end{theoremE}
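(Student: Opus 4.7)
The plan is to leverage the full algebraic classification assembled in Theorems A and B together with the known list of $4$-dimensional (non-nilpotent) Lie algebras, and then carry out the standard \emph{derivation dimension plus parametrized basis} analysis. Every $4$-dimensional symmetric Leibniz algebra is either nilpotent (Theorem A), a non-nilpotent non-Lie algebra from the list $\{\mathfrak L_{02},\mathfrak L_{14},\mathfrak L_{15}^\alpha,\mathfrak L_{16},\mathfrak L_{24}^\alpha,\mathfrak L_{27},\mathfrak L_{30},\mathfrak L_{35}^\alpha,\mathfrak L_{36}\}$ of Theorem B, or a $4$-dimensional Lie algebra; the latter are known, and their orbit closures will be absorbed by the non-nilpotent families below. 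Thus the candidate irreducible components come from a finite search.

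First I would compute $\dim \mathcal O(\mathcal A)=16-\dim\mathfrak{Der}(\mathcal A)$ for every algebra and parametric family appearing in the union above. A direct calculation identifies the five families of maximal orbit dimension: $\mathfrak N_2(\alpha)$ and $\mathfrak N_3(\alpha)$ (which already provide two irreducible components in the nilpotent part, by Theorem D), and among the algebras of Theorem B the three candidates $\mathfrak L_{02}$, $\mathfrak L_{15}^\alpha$, $\mathfrak L_{24}^\alpha$, all of whose orbit closures have dimension $13$. The remaining algebras $\mathfrak L_{14},\mathfrak L_{16},\mathfrak L_{27},\mathfrak L_{30},\mathfrak L_{35}^\alpha,\mathfrak L_{36}$, the Lie algebras, and the nilpotent non-$2$-step algebras $\mathcal S_{01},\ldots,\mathcal S_{05}$ all have strictly smaller orbit dimension and must therefore be shown to lie in the closure of at least one of the five candidates.

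Next I would exhibit explicit parametrized bases proving, for every algebra $\mathcal A$ in the classification, a degeneration $X\to\mathcal A$ for some candidate $X\in\{\mathfrak N_2(\alpha),\mathfrak N_3(\alpha),\mathfrak L_{02},\mathfrak L_{15}^\alpha,\mathfrak L_{24}^\alpha\}$. The most delicate degenerations are the ones that witness the Vergne property: namely, showing that the algebra $\mathcal S_{01}$, which is rigid inside the nilpotent variety by Theorem D, lies in the closure of a non-nilpotent family (a parametrized basis with the index $\alpha$ going to $\infty$ inside $\mathfrak L_{24}^\alpha$ is a natural candidate, since $\mathfrak L_{24}^\alpha$ has the right $L$- and $R$-multiplication structure to collapse onto $\mathcal S_{01}$). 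Producing this specific degeneration is the main technical obstacle: a mistake here would force $\mathcal S_{01}$ to appear as a sixth component and destroy the Vergne property stated in the theorem. Similar but easier parametrized bases handle the remaining degenerations $\mathfrak L_{15}^\alpha\to\mathfrak L_{14},\mathfrak L_{16}$ and $\mathfrak L_{24}^\alpha\to \mathfrak L_{27},\mathfrak L_{30},\mathfrak L_{35}^\alpha,\mathfrak L_{36}$, plus the nilpotent degenerations of Theorem D.

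Finally I would verify pairwise incomparability of the five candidates. The distinction between $\{\mathcal C_1,\mathcal C_2\}$ and $\{\mathcal C_3,\mathcal C_4,\mathcal C_5\}$ follows at once from nilpotency, a closed Borel-stable condition applicable via Lemma \ref{gmain}. Within the nilpotent side, Theorem D already records that $\mathcal C_1\not\subseteq\mathcal C_2$ and vice versa. Within the non-nilpotent side I would separate $\mathfrak L_{02},\mathfrak L_{15}^\alpha,\mathfrak L_{24}^\alpha$ by invariant Borel-stable conditions of the form $A_iA_j\subseteq A_k$, together with the structural dichotomy between the $(xy,yx)=(e_2,-e_2)$ pattern present in $\mathfrak L_{24}^\alpha$ and the $e_1e_3=-e_3,\ e_3e_1=e_3$ pattern of $\mathfrak L_{15}^\alpha$, neither of which can degenerate into the other. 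This also yields $\dim(\text{variety})=13$. Putting the steps together gives exactly five irreducible components and establishes both claims of the theorem.
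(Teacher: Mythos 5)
Your overall strategy (orbit/derivation dimensions, explicit parametrized bases for degenerations, Borel-stable closed conditions for non-degenerations) is the same machinery the paper uses, but the paper does not redo this work from scratch: it imports almost all of it from the prior geometric classification of \emph{all} $4$-dimensional Leibniz algebras in \cite{ikv17}. Since symmetric Leibniz algebras form a closed ${\rm GL}_4$-stable subvariety of one-sided Leibniz algebras, the fact that $\mathfrak{L}_{15}^{\alpha}$ is a component of the larger variety transfers verbatim, and the degenerations established there (onto ${\mathcal S}_{01}$, $\mathfrak{L}_{14}$, $\mathfrak{L}_{16}$, $\mathfrak{L}_{36}$ and the Lie algebras) can be reused once one checks the degenerating algebras are themselves symmetric Leibniz; the paper then only has to add three new parametrized bases ($\mathfrak{L}_{15}\to\mathfrak{L}_{35}^{\alpha}$, $\mathfrak{L}_{35}\to\mathfrak{L}_{27}$, $\mathfrak{L}_{24}\to\mathfrak{L}_{30}$), the $S$-invariant for $\mathfrak{L}_{15}^{\alpha}\not\to\mathfrak{L}_{02},\mathfrak{L}_{24}^{\alpha}$, and two structural invariants separating $\mathfrak{N}_2(\alpha),\mathfrak{N}_3(\alpha)$ from the non-nilpotent components.

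The genuine gap in your write-up is that the decisive degenerations are never produced. You correctly identify that everything hinges on showing ${\mathcal S}_{01}$ (rigid in the nilpotent subvariety by Theorem D), together with $\mathfrak{L}_{14},\mathfrak{L}_{16},\mathfrak{L}_{27},\mathfrak{L}_{30},\mathfrak{L}_{35}^{\alpha},\mathfrak{L}_{36}$ and all the Lie algebras, lies in the closure of one of the five candidates, but for the critical case ${\mathcal S}_{01}$ you offer only that a degeneration from $\mathfrak{L}_{24}^{\alpha}$ with $\alpha\to\infty$ ``is a natural candidate'' and call it the main technical obstacle. Until that parametrized basis (or a citation to \cite{ikv17}, which is what the paper does) is supplied, the component count is not established and the Vergne property claim is unsupported. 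Two further inaccuracies: it is false that $\mathfrak{L}_{02}$, $\mathfrak{L}_{15}^{\alpha}$, $\mathfrak{L}_{24}^{\alpha}$ all have $13$-dimensional orbit closures --- the paper records that $\overline{\mathcal{O}(\mathfrak{L}_{02})}$ and $\overline{\{\mathcal{O}(\mathfrak{L}_{24}^{\alpha})\}}$ are $12$-dimensional and only $\overline{\{\mathcal{O}(\mathfrak{L}_{15}^{\alpha})\}}$ attains $13$ --- so selecting candidates by ``maximal orbit dimension'' is not a sound criterion here, as the components have dimensions $13$, $12$, $12$ and smaller. Finally, your separation of $\mathfrak{L}_{15}^{\alpha}$ from $\mathfrak{L}_{24}^{\alpha}$ via an informal ``structural dichotomy'' of multiplication patterns is not an invariant argument; one needs a genuine closed invariant such as the $S$-invariant used in \cite{ikv17}.
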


\begin{Proof}

Thanks to \cite{ikv17}, we have that 
${\mathcal S}_{01},{\mathfrak L}_{14},{\mathfrak L}_{16},{\mathfrak L}_{36}$
can not give irreducible components in the variety of symmetric Leibniz algebras.
All needed degenerations are given in the table below:
\begin{center}  
\begin{longtable}{|lll|llll|}
\hline
${\mathfrak L}^{-\frac{\alpha}{(\alpha-1)^2}}_{15}$&$\to$&$\mathfrak{L}_{35}^{\alpha}$ & 
$E_1^t=  e_1 + \frac{1}{\alpha-1} e_2$ & $ E_2^t=t e_2$ & $E_3^t=\frac{t}{\alpha-1}e_4 $ &$E_4^t=e_3$
\\ 
 \hline

${\mathfrak L}^{-t^{-1}}_{35}$&$\to$&$\mathfrak{L}_{27}$ & 
$E_1^t=-e_1$ & $ E_2^t=e_4$ & $E_3^t=te_2$ &$E_4^t=e_3$
\\ 
 \hline

${\mathfrak L}^{1+t}_{24}$&$\to$&$\mathfrak{L}_{30}$ & 
$E_1^t=e_1$ & $E_2^t=e_2+e_3$ &$E_3^t=te_3$& $ E_4^t=te_4$ 
\\ 
 \hline
 \end{longtable}
\end{center}
Algebras $\mathfrak{L}_{15}^{\alpha}$ give an irreducible component
in all (one-sided) Leibniz algebras \cite{ikv17}. Hence, it gives irreducible components in symmetric Leibniz algebras.
$\mathfrak{L}_{15}^{\alpha} \not\to \mathfrak{L}_{2},\mathfrak{L}_{24}^{\alpha}$ by $S$-invariant from \cite[Proof of Theorem 2]{ikv17}. 
Algebras $\mathfrak{L}_{2}$ and $\mathfrak{L}_{24}^{\alpha}$ have $12$-dimensional geometrical varieties, and from here, 
we have two new irreducible components.

Algebras $\mathfrak{N}_2(\alpha)$ and $\mathfrak{N}_3(\alpha)$ give two irreducible components by the following reasons:

\begin{enumerate}
    \item ${\mathfrak L}^{\alpha}_{24}$ and ${\mathfrak L}_{2}$ have a $3$-dimensional anticommutative subalgebra, but $\mathfrak{N}_2(\alpha)$ and $\mathfrak{N}_3(\alpha)$ not.
    
    \item ${\mathfrak L}^{\alpha}_{15}$ satisfies the following conditions
    $\{  A_1^2 \subseteq A_3, c_{11}^3=c_{22}^3=0\},$  but $\mathfrak{N}_2(\alpha)$ and $\mathfrak{N}_3(\alpha)$ not.
    
\end{enumerate}

\end{Proof}

\subsubsection{Conjectures about nilpotent algebras}
Several conjectures state that nilpotent Lie algebras form a very small subvariety in the variety of Lie algebras. Grunewald and O'Halloran conjectured in \cite{GRH3} that for any $n$-dimensional nilpotent Lie algebra $A$ there exists an $n$-dimensional non-nilpotent Lie algebra $B$ such that $B\to A$. At the same time,
Vergne conjectured in \cite{V70} that a nilpotent Lie algebra cannot be rigid in the variety of all Lie algebras. Analogous assertions can be conjectured for other varieties.
We will say that the variety $\mathfrak{A}$ of algebras has {\it Grunewald--O'Halloran Property} if for any nilpotent algebra $A\in\mathfrak{A}$ there is a non-nilpotent algebra $B\in\mathfrak{A}$ such that $B\to A$.
We will say that $\mathfrak{A}$ has {\it Vergne Property} if there are no nilpotent rigid algebras in $\mathfrak{A}$.
We will also say that $\mathfrak{A}$ has {\it Vergne--Grunewald--O'Halloran Property} if any irreducible component of $\mathfrak{A}$ contains a non-nilpotent algebra.
Grunewald--O'Halloran Property was proved for four dimensional Lie  algebras in \cite{BC99}
and for three dimensional  Leibniz algebras in \cite{CKLO13}. Also, some results concerning Grunewald--O'Halloran Conjecture for Lie algebras were obtained in \cite{tirao,ht16}.
It is clear that Vergne--Grunewald--O'Halloran Property follows from Grunewald--O'Halloran Property and Vergne Property follows from Vergne--Grunewald--O'Halloran Property.
It was proven that the variety of complex $4$-dimensional (one-sided) Leibniz algebras has Vergne--Grunewald--O'Halloran Property and on the other hand, it does not have Grunewald--O'Halloran Property \cite{ikv17}.
As a corollary of the Theorem E, we have

\begin{corollaryA}
The variety of complex $4$-dimensional symmetric Leibniz algebras has no Vergne--Grunewald--O'Halloran Property. However, it has  Vergne Property.
\end{corollaryA}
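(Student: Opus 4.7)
The plan is to read both halves of the statement directly off Theorem E, using only the definitions of the two properties together with the elementary fact that nilpotency is a Zariski-closed condition. Recall from Theorem E that the variety of complex $4$-dimensional symmetric Leibniz algebras has exactly five irreducible components: the closures $\mathcal{C}_1,\mathcal{C}_2$ of the families of nilpotent algebras $\mathfrak{N}_2(\alpha)$ and $\mathfrak{N}_3(\alpha)$, the closures $\mathcal{C}_4,\mathcal{C}_5$ of the non-nilpotent families $\mathfrak{L}_{15}^\alpha$ and $\mathfrak{L}_{24}^\alpha$, and the orbit closure $\mathcal{C}_3=\overline{\mathcal{O}(\mathfrak{L}_{02})}$ of the single non-nilpotent algebra $\mathfrak{L}_{02}$.

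First I would refute the Vergne--Grunewald--O'Halloran Property by exhibiting an irreducible component that contains only nilpotent algebras. Inspecting the multiplication tables of $\mathfrak{N}_2(\alpha)$ shows that each algebra in the family satisfies $A^3=0$, so the family lies entirely in the Zariski-closed subset of $\mathbb{L}(T)$ cut out by the polynomial equations corresponding to $A^3=0$. Consequently $\mathcal{C}_1=\overline{\{\mathcal{O}(\mathfrak{N}_2(\alpha))\}}$ is contained in this closed set of $2$-step nilpotent algebras, and in particular contains no non-nilpotent algebra. Hence $\mathcal{C}_1$ witnesses the failure of the Vergne--Grunewald--O'Halloran Property.

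Next I would establish the Vergne Property by locating all rigid algebras. By the criterion recalled in Subsection 2.1, an algebra $A$ is rigid if and only if $\overline{\mathcal{O}(A)}$ is an irreducible component. Among the five components listed in Theorem E, only $\mathcal{C}_3=\overline{\mathcal{O}(\mathfrak{L}_{02})}$ is a single orbit closure; the other four components are closures of genuine one-parameter families of pairwise non-isomorphic algebras (this non-isomorphism is the content of Theorems A and B for the respective families $\mathfrak{N}_2(\alpha)$, $\mathfrak{N}_3(\alpha)$, $\mathfrak{L}_{15}^\alpha$, $\mathfrak{L}_{24}^\alpha$), so no individual orbit inside them is open. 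Thus $\mathfrak{L}_{02}$ is the unique rigid algebra of the variety. Since $e_1e_2=-e_2$ in $\mathfrak{L}_{02}$ exhibits a non-zero eigenvalue of the left multiplication operator $L_{e_1}$, the algebra $\mathfrak{L}_{02}$ is plainly non-nilpotent, so there is no nilpotent rigid algebra, which is precisely the Vergne Property.

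The hard part is not in this corollary itself, which is bookkeeping, but in Theorem E on which it relies: one needs the complete list of irreducible components to be available. Once Theorem E is in hand, the only things to verify are (i) that $\mathfrak{N}_2(\alpha)$ (or $\mathfrak{N}_3(\alpha)$) is a nilpotent family and (ii) that each one-parameter family parametrizes an infinite set of pairwise non-isomorphic algebras, so that no algebra in those components has an open orbit. Both checks are immediate from the multiplication tables and from the algebraic classification.
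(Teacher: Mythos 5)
Your proposal is correct and follows exactly the route the paper intends: the paper gives no separate argument for the corollary, deriving it immediately from Theorem E, and your write-up simply makes explicit the two observations needed (that $\mathcal{C}_1$, being the closure of the $2$-step nilpotent family $\mathfrak{N}_2(\alpha)$, lies in the Zariski-closed locus of nilpotent algebras, and that the unique rigid algebra $\mathfrak{L}_{02}$ is non-nilpotent). Nothing is missing.
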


\subsection{The geometric classification of $5$-dimensional nilpotent 
 symmetric Leibniz algebras}
The main result of the present section is the following theorem.

\begin{theoremF}\label{geo3}
The variety of $5$-dimensional nilpotent symmetric Leibniz     algebras  has 
dimension  $24$   and it has 
$6$  irreducible components
defined by  
 
\begin{center}
$\mathcal{C}_1=\overline{\{\mathcal{O}({\mathfrak V}_{4+1})\}},$ \ $\mathcal{C}_2=\overline{\{\mathcal{O}({\mathfrak V}_{3+2})\}},$ \
$\mathcal{C}_3=\overline{\{\mathcal{O}({\mathfrak V}_{2+3})\}},$ \\
%$\mathcal{C}_4=\overline{ \mathcal{O}(\mathbb{S}_{01}) },$ \   
%$\mathcal{C}_5=\overline{ \mathcal{O}(\mathbb{S}_{05}) },$ \\  
%$\mathcal{C}_6=\overline{ \mathcal{O}(\mathbb{S}_{06}) },$ \  
$\mathcal{C}_4=\overline{\{\mathcal{O}(\mathbb{S}_{21}^{\alpha,\beta})\}},$ \  
$\mathcal{C}_5=\overline{\{\mathcal{O}(\mathbb{S}_{22}^{\alpha})\}}$ \ and  
$\mathcal{C}_6=\overline{\{\mathcal{O}(\mathbb{S}_{41}^{\alpha})\}}.$   

\end{center}

In particular, there are no rigid algebras in this variety.
 
\end{theoremF}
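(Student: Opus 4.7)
The plan is to decompose the variety into the $2$-step nilpotent locus (whose irreducible components are already known) and the non-$2$-step locus classified in Theorem C, and then to show that three explicitly chosen parametric families exhaust the components of the non-$2$-step part. First, I would invoke the geometric classification of complex $5$-dimensional $2$-step nilpotent algebras from \cite{ikp20}: the $2$-step nilpotent subvariety is irreducible into three components given by the families ${\mathfrak V}_{4+1}, {\mathfrak V}_{3+2}, {\mathfrak V}_{2+3}$, contributing $\mathcal{C}_1,\mathcal{C}_2,\mathcal{C}_3$. These have known dimensions and are Zariski-closed in the full variety because the identity $xyz=0$ is closed.

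Second, for each non-$2$-step algebra $\mathbb{S}_j$ from the list in Theorem C (including each parametric family), I would compute $\dim\mathfrak{Der}(\mathbb{S}_j)$, and hence $\dim O(\mathbb{S}_j)=25-\dim\mathfrak{Der}(\mathbb{S}_j)$. Comparing dimensions and eliminating those with strictly smaller orbit dimension that lie in a suitable derivation/invariant-type containment chain, the candidates for irreducible components among the non-$2$-step algebras should be precisely the three families $\mathbb{S}_{21}^{\alpha,\beta}$ (dimension $22$, giving a $24$-dimensional component once the two continuous parameters are taken into account), $\mathbb{S}_{22}^{\alpha}$, and $\mathbb{S}_{41}^{\alpha}$. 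I would then build explicit parametrized bases $E_i^t=\sum a_i^j(t)e_j$ together with parametrized indices $f(t)$ realizing the degenerations from each of these three families to every other $\mathbb{S}_j$ of smaller orbit dimension, as already done successfully in Theorem D for the $4$-dimensional case; the degenerations will be organized as a table analogous to the one in the proof of Theorem D.

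Third, I would prove irredundancy: none of the six candidate components lies in another. For the separations between the three $2$-step components and the three non-$2$-step components, it suffices to observe that each $\mathcal{C}_4,\mathcal{C}_5,\mathcal{C}_6$ contains algebras with $A_1 A_1 A_1 \ne 0$, a condition that is closed under degeneration into non-$2$-step, so these families cannot lie in $\mathcal{C}_1\cup\mathcal{C}_2\cup\mathcal{C}_3$. To distinguish $\mathcal{C}_4,\mathcal{C}_5,\mathcal{C}_6$ from each other, I would use closed invariants such as $\dim\operatorname{Ann}({\bf A})$, $\dim {\bf A}^2$, the dimension of the anticommutative part, and the associated commutator/anticommutator Poisson structures, combined with the Borel subgroup lemma (Lemma~\ref{gmain}) applied to appropriate ${\mathfrak B}$-stable closed subsets carved out by these invariants. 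Finally, the total variety dimension follows as the maximum of the component dimensions, and the absence of rigid algebras follows because each of the six components is the closure of a positive-dimensional family of orbits, so no single orbit is open.

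The main obstacle will be the exhaustive verification of non-degenerations between the three candidate non-$2$-step families and from each of them into the $2$-step components: finding invariants sharp enough to separate $\overline{\mathcal{O}(\mathbb{S}_{21}^{\alpha,\beta})}$, $\overline{\mathcal{O}(\mathbb{S}_{22}^{\alpha})}$, and $\overline{\mathcal{O}(\mathbb{S}_{41}^{\alpha})}$ requires careful inspection of the multiplication tables to extract Borel-invariant polynomial conditions (rank conditions on $L_x,R_x$, and nilpotency class of certain subspaces). The construction of explicit degeneration matrices for the parametric targets is also delicate because both the parametrized basis and the parametrized index must be tuned simultaneously, but this is largely computational once the target invariants are identified.
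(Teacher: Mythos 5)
Your overall strategy coincides with the paper's: split off the $2$-step nilpotent locus (three components from \cite{ikp20}), identify $\mathbb{S}_{21}^{\alpha,\beta}$, $\mathbb{S}_{22}^{\alpha}$, $\mathbb{S}_{41}^{\alpha}$ as the maximal non-$2$-step families by orbit-dimension comparison, realize all remaining algebras of Theorem~C via parametrized bases and indices, and separate the candidate components with Borel-stable closed conditions via Lemma~\ref{gmain}. However, your decomposition of the variety is incomplete, and this is a genuine gap. Theorem~C classifies only the \emph{non-split} non-$2$-step nilpotent symmetric Leibniz algebras, so the union ``($2$-step locus) $\cup$ (algebras of Theorem~C)'' omits the split non-$2$-step algebras $\mathcal{S}_{j}\oplus\CC$, $j=1,\dots,5$. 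These cannot be waved away: $\mathcal{S}_{01}$ is \emph{rigid} in the variety of $4$-dimensional nilpotent symmetric Leibniz algebras (Theorem~D), so $\mathcal{S}_{01}\oplus\CC$ is a serious candidate for a seventh irreducible component in dimension $5$, and nothing in your argument places it inside $\mathcal{C}_1\cup\dots\cup\mathcal{C}_6$. The paper closes exactly this hole by exhibiting the explicit degeneration $\mathbb{S}^{2,1}_{21}\to\mathcal{S}_{01}$ (with $E_1^t=e_1-e_2$, $E_2^t=e_2$, $E_3^t=e_3+e_5$, $E_4^t=e_5$, $E_5^t=-t^{-1}e_4+t^{-1}e_5$), together with the observation that all other split algebras already lie in $\overline{\{\mathcal{O}({\mathfrak V}_{4+1})\}}$ and $\overline{\{\mathcal{O}({\mathfrak V}_{3+2})\}}$ by Theorem~D. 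You must add this step, or your proof establishes at most that the six listed closures are pairwise incomparable, not that they exhaust the variety.

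A secondary, numerical point: you assert that $\mathbb{S}_{21}^{\alpha,\beta}$ has orbit dimension $22$ and yields a $24$-dimensional component. In the paper one has $\dim \overline{\{\mathcal{O}(\mathbb{S}_{21}^{\alpha,\beta})\}}=\dim \overline{\{\mathcal{O}(\mathbb{S}_{22}^{\alpha})\}}=21$ and $\dim\overline{\{\mathcal{O}(\mathbb{S}_{41}^{\alpha})\}}=20$; the dimension $24$ of the whole variety is attained by the $2$-step component $\overline{\{\mathcal{O}({\mathfrak V}_{3+2})\}}$, not by any non-$2$-step family. This does not affect the architecture of the argument, but it does affect the claimed value of $\dim\mathcal{C}_4$ and the source of the number $24$ in the statement.
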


\begin{proof}
Thanks to \cite{ikp20} the variety of $5$-dimensional $2$-step nilpotent algebras has only three irreducible components defined by 

\begin{longtable}{lllllll}
${\mathfrak V}_{4+1}$ & $:$&  
$e_1e_2=e_5$& $e_2e_1=\lambda e_5$ &$e_3e_4=e_5$&$e_4e_3=\mu e_5$\\

${\mathfrak V}_{3+2}$ &$ :$&
$e_1e_1 =  e_4$& $e_1e_2 = \mu_1 e_5$ & $e_1e_3 =\mu_2 e_5$& 
$e_2e_1 = \mu_3 e_5$  & $e_2e_2 = \mu_4 e_5$  \\
& & $e_2e_3 = \mu_5 e_5$  & $e_3e_1 = \mu_6 e_5$  & \multicolumn{2}{l}{$e_3e_2 = \lambda e_4+ \mu_7 e_5$ } & $e_3e_3 =  e_5$  \\

${\mathfrak V}_{2+3}$ &$ :$&
$e_1e_1 = e_3 + \lambda e_5$& $e_1e_2 = e_3$ & $e_2e_1 = e_4$& $e_2e_2 = e_5$

\end{longtable}

Thanks to Theorem D, 
all $5$-dimensional split symmetric Leibniz  algebras are in orbit closure 
of families ${\mathfrak V}_{4+1}$ and ${\mathfrak V}_{3+2},$
and 
$\mathcal{S}_{01}$ (considered as a $5$-dimensional algebra).  
After a careful  checking  dimensions of orbit closures of the more important for us algebras, we have 

\begin{center}  
$\dim  \mathcal{O}({\mathfrak V}_{3+2})=24,$ \, 
$%\dim \mathcal{O}(\mathbb{S}_{05})=
\dim \mathcal{O}(\mathbb{S}_{21}^{\alpha,\beta})=\dim \mathcal{O}(\mathbb{S}_{22}^{\alpha}) = 21,$

%$\dim \mathcal{O}(\mathbb{S}_{16})=\dim \mathcal{O}(\mathbb{S}_{18})=
$\dim \mathcal{O}(\mathbb{S}_{41}^{\alpha})=
\dim \mathcal{O}({\mathfrak V}_{4+1})=20, \,
%\dim \mathcal{O}(\mathbb{S}_{01})= \dim \mathcal{O}(\mathbb{S}_{06})=19, \, 
\dim \mathcal{O}({\mathfrak V}_{2+3})=18.$

 \end{center}
 Hence, 
$ \mathbb{S}_{21}^{\alpha,\beta}, \mathbb{S}_{22}^{\alpha},$ and ${\mathfrak V}_{3+2}$ give $3$ irreducible components.
Below we have listed all important reasons for necessary non-degenerations.
%{\color{blue}no-degenerations}

%\begin{tabular}{|ccccccccc|}
%\hline
% $\mathfrak{g}$ & $\mathbb{S}_{05}$ & $\mathbb{S}_{16}$ & $\mathbb{S}_{18}$ & $\mathbb{S}_{22}^\alpha$ & $\mathbb{S}_{01}$ & $\mathbb{S}_{06}$ & $\mathbb{S}_{21}^{\alpha\beta}$ & $\mathbb{S}_{41}^\alpha$\\ \hline
%$\dim\operatorname{Der}\mathfrak{g}$ & 4 & 5 & 5 & 5 & 6 & 6 & 6 & 6 \\ \hline
%\end{tabular}

\begin{longtable}{|lcl|l|}
\hline
\multicolumn{4}{|c|}{\textrm{
{\bf  Non-degenerations reasons}}
}  \\
\hline

%$\begin{array}{l}\mathbb{S}_{05}
%\end{array}$  &$\not \to$& 
%$  \begin{array}{l}
%\mathbb{S}_{41}^{\alpha}, \\
%{\mathfrak V}_{2+3},\\ 
%{\mathfrak V}_{4+1}
%\end{array} $
%& ${\mathcal R}=
%\left\{
%\begin{array}{l}
%A_1^2  \subseteq A_3, \, 
%A_2^2+ A_1A_3+A_3A_1 \subseteq A_4, \\
%A_3^2+A_1A_4+A_4A_1=0, \\ 

%c_{11}^3=0,\, 
%c_{12}^3=-c_{21}^3, \,
%c_{13}^4=-c_{31}^4, \,
%c_{13}^5=-c_{31}^5, \\

%c_{23}^4=-c_{32}^4, \,
%c_{23}^5=-c_{23}^5, \, 
%4 c_{22}^4 c_{11}^4 = (c_{12}^4 + c_{21}^4)^2, \\

%c_{11}^4(c_{23}^4)^2=c_{22}^4 (c_{31}^4)^2, \,

%2 c_{22}^4 c_{31}^4+c_{12}^4c_{23}^4=-c_{21}^4c_{23}^4,\\ 
%c_{11}^5 (c_{23}^4)^3= c_{31}^4 (c_{22}^5c_{23}^4 c_{31}^4-c_{22}^4 c_{23}^5c_{31}^4+c_{22}^4c_{23}^4c_{31}^5),\\

%2 c_{22}^5c_{23}^4 c_{13}^4=c_{12}^5 ((c_{23}^4)^2+ (c_{23}^4)^2)+c_{22}^4(c_{23}^4 c_{31}^5- c_{23}^5 c_{31}^4), \\
 
% c_{31}^5 c_{11}^4 (c_{23}^4)^2 +  c_{13}^4 (  c_{11}^5 (c_{23}^4)^2 + c_{11}^4 c_{23}^4 c_{23}^5) =c_{13}^4 (c_{13}^4)^2 c_{22}^5
%\end{array}
%\right\}$
% \\
%\hline

%$  \begin{array}{l}
%\mathbb{S}_{01},\\ 
%\mathbb{S}_{06}
%\end{array}  $  &$\not \to$& 
%$\begin{array}{l}
%{\mathfrak V}_{2+3}\\ 
%\end{array} $
%& ${\mathcal R}=
%\left\{
%\begin{array}{l}
%A_1^2 \subseteq A_3, \,
%A_1A_3 +A_2^2\subseteq A_4, \,
%A_3^2+A_1A_4+A_4A_1=0, \\ 

%c_{11}^3=0,\, 
%c_{12}^3=-c_{21}^3, \,
%c_{12}^4=-c_{21}^4 \,

%\end{array}
%\right\}$
% \\
%\hline

$  \begin{array}{l}
\mathbb{S}_{21}^{\alpha,\beta} \\ 
\end{array}  $  &$\not \to$& 
$\begin{array}{l}
\mathbb{S}_{41}^{\alpha},\\ 
{\mathfrak V}_{4+1},\\ 
{\mathfrak V}_{2+3}\\ 
\end{array} $
& ${\mathcal R}=
\left\{
\begin{array}{l}
A_1^2 \subseteq A_3, \,
%A_1A_3+A_3A_1 \subseteq A_4, \\
A_1A_4+A_4A_1=0, \\ 

c_{11}^3=0,\, 
c_{22}^3=c_{22}^4=0,\, 

%c_{11}^4c_{23}^4=c_{13}^4(c_{12}^4+c_{21}^4),\\ 

c_{12}^3=-c_{21}^3, \,

%c_{13}^4=-c_{31}^4, \,
%c_{13}^5=-c_{31}^5, \\
 
%c_{23}^4=-c_{32}^4, \,
%c_{23}^5=-c_{32}^5 \,

\end{array}
\right\}$
 \\
\hline

$\begin{array}{l}
\mathbb{S}_{22}^{\alpha}
\end{array}  $  &$\not \to$& 
$\begin{array}{l}
\mathbb{S}_{41}^{\alpha},\\
{\mathfrak V}_{2+3},\\  
  {\mathfrak V}_{4+1}  
\end{array} $
& ${\mathcal R}=
\left\{
\begin{array}{l}
A_1^2  \subseteq A_3, \,
A_1A_3+A_3A_1+A_2^2 \subseteq A_5, \,
A_1A_5+A_5A_1=0, \\ 

c_{11}^3=c_{11}^4=0,\, 
 
c_{12}^3=-c_{21}^3, \,
c_{12}^4=-c_{21}^4, \, 
 c_{34}^5 = c_{43}^5, \\ 

c_{24}^5 c_{33}^5= c_{23}^5 c_{43}^5, \,
c_{32}^5 c_{43}^5= c_{33}^5 c_{42}^5,\,
c_{23}^5 c_{42}^5= c_{24}^5 c_{32}^5, \\ 
c_{44}^5 c_{23}^5=  c_{24}^5 c_{34}^5, \,
c_{44}^5 c_{33}^5=  c_{34}^5 c_{43}^5, \,
c_{44}^5 c_{32}^5=  c_{42}^5 c_{34}^5, \\

c_{13}^5 c_{43}^5 +c_{31}^5 c_{43}^5=c_{14}^5 c_{33}^5 + c_{33}^5 c_{41}^5, \\
%c_{14}^5 c_{24}^5+   c_{24}^5 c_{41}^5  =c_{13}^5 c_{23}^5  + c_{23}^5 c_{31}^5,   \\

c_{14}^5 c_{43}^5  +c_{41}^5 c_{43}^5= c_{31}^5 c_{44}^5 + c_{13}^5 c_{44}^5,\\
c_{41}^5 c_{23}^5 + c_{14}^5 c_{23}^5 = c_{13}^5 c_{24}^5+c_{24}^5c_{31}^5

\end{array}
\right\}$
 \\
\hline

$\begin{array}{l}
\mathbb{S}_{41}^{\alpha} \\ 
\end{array}  $  &$\not \to$& 
$\begin{array}{l}
{\mathfrak V}_{2+3}\\ 
\end{array} $
& ${\mathcal R}=
\left\{
\begin{array}{l}
A_1^2  \subseteq A_3, \,
%A_1A_3+A_3A_1 \subseteq A_4, \,
%A_3^2=0, \\ 

c_{11}^3=c_{11}^4=0,\, 
c_{22}^3=c_{22}^4=0,\,

%c_{12}^3=-c_{21}^3, \,
%c_{12}^4=-c_{21}^4, \\ 

%c_{13}^4=-c_{31}^4, \,
%c_{13}^5=-c_{31}^5, \,
 
%c_{23}^4=-c_{32}^4, \,
%c_{23}^5=-c_{32}^5 \,

\end{array}
\right\}$
 \\
\hline

\end{longtable}

%\bigskip

%\begin{tabular}{|c|c|}
%\hline
%$\mathbb{S}_{16},\mathbb{S}_{18}\not\to\mathbb{S}_{01},\mathbb{S}_{06}$ & $\dim H^2(\mathfrak{g})=4>3=\dim H^2(\mathfrak{h})$\\ \hline

%$\mathbb{S}_{41}^\alpha\not\to\mathbb{S}_{01},\mathbb{S}_{06},\mathbb{S}_{21}^\alpha$ & $\dim H^2(\mathfrak{g})=4>3=\dim H^2(\mathfrak{h})$\\ \hline

%$\mathbb{S}_{22}^\alpha\not\to\mathbb{S}_{16},\mathbb{S}_{18},\mathbb{S}_{41}^\alpha$ & $\dim H^2(\mathfrak{g})=9>4=\dim H^2(\mathfrak{h})$\\ \hline

%$\mathbb{S}_{22}^{\alpha}\not\to \mathbb{S}_{01},\mathbb{S}_{06},\mathbb{S}_{21}^{\alpha\beta}$ & $\dim H^2(\mathfrak{g})=9>3=\dim H^2(\mathfrak{h})$\\ \hline

%$\mathbb{S}_{21}^{\alpha\beta}\not\to\mathbb{S}_{22}^{\alpha},\mathbb{S}_{41}^{\alpha}$ & $\dim \operatorname{Ann}(\mathfrak{g})=2>1=\dim \operatorname{Ann}(\mathfrak{h})$\\ \hline
%\end{tabular}

The rest of the degenerations is given below on the following two tables and it completes the proof of the Theorem.

\begin{longtable}{|lclll|}
 
\hline

$\mathbb{S}^{2,1}_{21}$&$\to$&$\mathcal{S}_{01}$ & 
$E_1^t=e_1-e_2$ & $E_2^t=e_2$\\
\multicolumn{3}{|l}{$E_3^t=e_3+e_5$} & $E_4^t=e_5$ & $E_5^t=-t^{-1}e_4+t^{-1}e_5$\\ \hline

$\mathbb{S}_{21}^{-t, 2i \sqrt{t}}$&$\to$&$\mathbb{S}_{01}$ & 
$E_1^t=te_2$ & $E_2^t=i \sqrt{t} e_1 + t e_2 + i \sqrt{t} e_3 $ \\ 
\multicolumn{3}{|l}{$E_3^t=-i \sqrt{t^3} e_3 + t^2  e_5$} & $ E_4^t=t^2 e_5$ & $E_5^t=i \sqrt{t^3} e_4$ \\ \hline

$\mathbb{S}_{21}^{2+t, -2}$&$\to$&$\mathbb{S}_{05}$ & 
$E_1^t=-e_2$ & $E_2^t=t^{-1}e_1+t^{-1}e_2+2t^{-1}e_3$ \\ 
\multicolumn{3}{|l}{$E_3^t=t^{-1}e_3$} & $ E_4^t=t^{-2}e_4+t^{-2}e_5$ & $E_5^t=t^{-1}e_5$ \\ \hline

$\mathbb{S}_{21}^{-t^2, it^{-1}+2it}$&$\to$&$\mathbb{S}_{06}$ & 
$E_1^t=i t e_1 + t^2 e_2$ & $E_2^t= t e_1 + i t^2 e_2 + (t + 2 t^3) e_3$ \\ 
\multicolumn{3}{|l}{$E_3^t=-2 t^3 e_3 + 2 t^5 e_4$} & $ E_4^t= -4 t^4 e_5$ & $E_5^t= i t^3 e_4 - t^2 e_5 $ \\ \hline

$\mathbb{S}^{t+\frac{1}{4}}_{22}$&$\to$&$\mathbb{S}_{24}$ & 
\multicolumn{2}{l|}{$E_1^t=\left(\frac{4t}{4t^2-1}\right)^{\frac{1}{2}}e_1+\left(\frac{2}{4t^2-1}\right)e_2-\left(\frac{1}{4t^2-1}\right)e_4$} \\ 

\multicolumn{3}{|l}{ $E_2^t=\left(\frac{4t}{4t^2-1}\right)e_2$} &
\multicolumn{2}{l|}{$E_3^t=\left(\frac{4t}{4t^2-1}\right)^{\frac{3}{2}}e_3+\frac{-8t^2+2t}{(4t^2-1)^2}e_5$}\\

\multicolumn{3}{|l}{$ E_4^t=-\frac{(4t)^{\frac{3}{2}}}{(4t^2-1)^{\frac{5}{2}}}e_3+\left(\frac{4t}{4t^2-1}\right)e_4$} &
\multicolumn{2}{l|}{$E_5^t=\left(\frac{4t}{4t^2-1}\right)^2e_5$} \\ \hline

$\mathbb{S}^{-\frac{\alpha}{(\alpha-1)^2}}_{22}$&$\to$&$\mathbb{S}_{30}^{\alpha\neq 1}$ & 
$E_1^t=te_1$ & $E_2^t=e_2+\left(\frac{1}{\alpha-1}\right)e_4$ \\
\multicolumn{3}{|l}{ $E_3^t=te_3$} & $ E_4^t=t^2(\alpha-1)e_4$ & $E_5^t=t^2e_5$ \\ \hline

%$\mathbb{S}^{\frac{t-1}{t^2}}_{22}$&$\to$&$\mathbb{S}_{30}^1$ & 
%$E_1^t=e_1$ & $E_2^t=e_2+\left(\frac{1-t}{t}\right)e_4$ & $E_3^t=e_3$ & $ E_4^t=te_4$ & $E_5^t=e_5$ \\ \hline

$\mathbb{S}_{30}^{t-1}$&$\to$&$\mathbb{S}_{34}$ & 
$E_1^t=e_1+t^{-1}e_4$ & $E_2^t=t^{-1}e_2$ \\ \multicolumn{3}{|l}{$E_3^t=t^{-1}e_3-(t-1){t^{-2}}e_5$} & $ E_4^t=e_4$ & $E_5^t=t^{-1}e_5$ \\ \hline

$\mathbb{S}_{30}^{t-1}$&$\to$&$\mathbb{S}_{35}$ & 
$E_1^t=e_1+\frac{t}{1+t^3}e_2-t^{-2}e_4$ & $E_2^t=\frac{t^2}{1+t^3}e_2+\frac{1}{t}e_4$ \\ 
\multicolumn{3}{|l}{$E_3^t=\frac{t^2}{1+t^3}e_3+\frac{t-2}{1+t^3}e_5$} & $ E_4^t=\frac{t-t^2}{1+t^3}e_3+e_4$ & $E_5^t=\frac{t^2}{1+t^3}e_5$ \\ \hline

$\mathbb{S}_{30}^{t-1}$&$\to$&$\mathbb{S}_{39}$ & 
$E_1^t=e_1+\frac{t^2}{t^5+1}e_2-\frac{1}{t^3}e_4$ & $E_2^t=\frac{t^4}{t^5+1}e_2+\frac{1}{t}e_4$ \\ \multicolumn{3}{|l}{$E_3^t=\frac{t^4}{t^5+1}e_3+\frac{t(t-2)}{t^5+1}e_5$} & $ E_4^t=\frac{t^3}{t^5+1}e_3+te_4$ & $E_5^t=\frac{t^4}{t^5+1}e_5$ \\ \hline

%%%%% S_{05}

$\mathbb{S}_{41}^{-\frac{t^{-4}}{4}}$&$\to$&$\mathbb{S}_{42}$ & 
$E_1^t=t^{-2}e_1$ & $E_2^t=\frac{t^{-3}}{2}e_1+t^{-1}e_2$ \\ 
\multicolumn{3}{|l}{$E_3^t=t^{-3}e_3-\frac{t^{-5}}{2}e_5$} & $ E_4^t=t^{-4}e_4+\frac{t^{-6}}{2}e_5$ & $E_5^t=t^{-5}e_5$ \\ \hline

$\mathbb{S}_{45}$&$\to$&$\mathbb{S}_{47}$ & 
$E_1^t=\left(\frac{4}{4t^2-1}\right)^{\frac{3}{4}}te_1$ & $E_2^t=\frac{1}{2}\left(\frac{4}{4t^2-1}\right)^{\frac{3}{4}}e_1+\left(\frac{4}{4t^2-1}\right)^{\frac{1}{4}}e_2$\\ 
\multicolumn{3}{|l}{$E_3^t=\left(\frac{4}{4t^2-1}\right)te_3-\frac{1}{2}\left(\frac{4}{4t^2-1}\right)^{\frac{3}{2}}te_5$} 
 & $ E_4^t=\left(\frac{4}{4t^2-1}\right)^{\frac{5}{4}}te_4$ & $E_5^t=\left(\frac{4}{4t^2-1}\right)^{\frac{3}{2}}te_5$ \\ \hline

%%%% N_{01}
 
\end{longtable}

For the rest of degenerations, in  case of  $E_1^t,\dots, E_n^t$ is a {\it parametric basis} for ${\bf A}\to {\bf B},$ it will be denoted by
${\bf A}\xrightarrow{(E_1^t,\dots, E_n^t)} {\bf B}$. 

%%%%%%%%%%%%%

\begin{longtable}{|lcl|lcl|}
  
\hline
$\mathbb{S}_{01}   $ & $  \xrightarrow{ (t^2e_1,te_2,t^3e_3,t^4e_4,t^2e_5)}$&$ \mathbb{S}_{02}$ & 
$\mathbb{S}_{01} $ & $  \xrightarrow{ (t^{-2}e_1,t^{-1}e_2,t^{-3}e_3,t^{-4}e_4,t^{-3}e_5)} $ &   $ \mathbb{S}_{03}$ \\ \hline

$\mathbb{S}_{06} $ & $  \xrightarrow{ (t^{-1}e_1,t^{-\frac{1}{2}}e_2,t^{-\frac{3}{2}}e_3,t^{-2}e_4,t^{-2}e_5)} $ &   $ \mathbb{S}_{04}$ &
$\mathbb{S}_{06} $ & $  \xrightarrow{ (e_1,te_2,te_3,t^2e_4,e_5)} $ &   $ \mathbb{S}_{07}$ \\ \hline 

$\mathbb{S}_{21}^{1+t,t} $ & $  \xrightarrow{ (t^{-\frac{1}{2}}e_1,e_2,t^{-\frac{1}{2}}e_3,t^{-\frac{1}{2}}e_4,t^{-1}e_5)} $ &   $ \mathbb{S}_{08}$ &  $\mathbb{S}_{21}^{t,t} $ & $  \xrightarrow{ (t^{-\frac{1}{2}}e_1,e_2,t^{-\frac{1}{2}}e_3,t^{-\frac{1}{2}}e_4,t^{-1}e_5)} $ &   $ \mathbb{S}_{09}$ 
\\ \hline

$\mathbb{S}_{21}^{\frac{1}{t^2},\frac{\alpha}{t}} $ & $  \xrightarrow{ (te_1,e_2,te_3,te_4,e_5)} $ &   $ \mathbb{S}_{10}^{\alpha}$ & 
$\mathbb{S}_{10}^{t^{-\frac{1}{2}}} $ & $  \xrightarrow{ (t^{-\frac{1}{2}}e_1,e_2,t^{-\frac{1}{2}}e_3,t^{-\frac{1}{2}}e_4,t^{-1}e_5)} $ &   $ \mathbb{S}_{11}$\\ \hline 

$\mathbb{S}_{10}^{t^{-1}} $ & $  \xrightarrow{ (te_1,e_2,te_3,te_4,e_5)} $ &   $ \mathbb{S}_{12}$ &
$\mathbb{S}_{12} $ & $  \xrightarrow{ (te_1,e_2,te_3,te_4,e_5)} $ &   $ \mathbb{S}_{13}$ \\ \hline 

$\mathbb{S}_{12} $ & $  \xrightarrow{ (t^{-2}e_1,t^{-1}e_2,t^{-3}e_3,t^{-4}e_4,t^{-3}e_5)} $ &   $ \mathbb{S}_{14}$ & $\mathbb{S}_{11} $ & $  \xrightarrow{ (t^{-\frac{1}{2}}e_1,e_2,t^{-\frac{1}{2}}e_3,t^{-\frac{1}{2}}e_4,t^{-1}e_5)} $ &   $ \mathbb{S}_{15}$  \\ \hline

$\mathbb{S}^{\frac{1-t^2}{t^2},0}_{21}$&
$ \xrightarrow{
(t^{-1}e_1+t^{-1}e_2, t^{-2}e_2,  t^{-3}e_3+t^{-3}e_4,  t^{-5}e_4, t^{-4}e_5)}$&$\mathbb{S}_{16}$ & 

$\mathbb{S}_{16} $ & $  \xrightarrow{ (t^{-\frac{1}{2}}e_1,e_2,t^{-\frac{1}{2}}e_3,t^{-\frac{1}{2}}e_4,t^{-1}e_5)} $ &   $ \mathbb{S}_{17}$ 
\\ \hline

$\mathbb{S}_{16} $ & $  \xrightarrow{ (t^{-\frac{1}{2}}e_1,t^{-1}e_2,t^{-\frac{3}{2}}e_3,t^{-\frac{5}{2}}e_4,t^{-2}e_5)} $ & $ \mathbb{S}_{18}$ &
$\mathbb{S}_{18}$ & $\xrightarrow{ (t^{-\frac{1}{2}}e_1,e_2,t^{-\frac{1}{2}}e_3,t^{-\frac{1}{2}}e_4,t^{-1}e_5)} $ &   $ \mathbb{S}_{19}$ \\ \hline

$\mathbb{S}_{21}^{\frac{1}{t^2},t} $ & 
$  \xrightarrow{ 
(t^{-1}e_1, t^{-1}e_2,t^{-2}e_3,t^{-3}e_4,t^{-4}e_5)} $ &   $ \mathbb{S}_{20}$ 
&
$\mathbb{S}^{\alpha}_{22} $ & $  \xrightarrow{ (t^{-1}e_1,t^{-2}e_2,t^{-3}e_3,t^{-2}e_4,t^{-4}e_5)} $ &   $ \mathbb{S}_{23}^\alpha$ \\ \hline

$\mathbb{S}^{t^{-2}}_{22} $ & $  \xrightarrow{ (t^{-1}e_1,e_2,t^{-1}e_3,t^{-1}e_4,t^{-2}e_5)} $ &   $ \mathbb{S}_{25}$ &
$\mathbb{S}_{25} $ & $  \xrightarrow{ (t^{-1}e_1,t^{-2}e_2,t^{-3}e_3,t^{-2}e_4,t^{-4}e_5)} $ &   $ \mathbb{S}_{26}$ 

 \\ \hline

$\mathbb{S}_{24} $
& $  \xrightarrow{ (e_1,t^2e_2,t^2e_3,te_4,t^2e_5)} $ &   $ \mathbb{S}_{27}$ 
&

$\mathbb{S}_{25} $&
$  \xrightarrow{ (t^{-1}e_1,e_2,t^{-1}e_3,t^{-1}e_4,t^{-2}e_5)} $ &   $ \mathbb{S}_{28}$ \\ \hline

$\mathbb{S}_{28} $ & $  \xrightarrow{ (te_1,t^{-2}e_2,t^{-1}e_3,e_4,e_5)} $ &   $ \mathbb{S}_{29}$
&

$\mathbb{S}_{30}^{\alpha} $ & $  \xrightarrow{ (te_1,t^{-2}e_2,t^{-1}e_3,t^2e_4,e_5)} $ &   $ \mathbb{S}_{31}^{\alpha}$ \\ \hline

$\mathbb{S}_{30}^{t-1}$&
$ \xrightarrow{(e_1,  e_2+t^{-1}e_4, e_3,  e_4, e_5)}$&
$\mathbb{S}_{32}$   
&

$\mathbb{S}_{35} $ & $  \xrightarrow{ (t^{-1}e_1,t^{-2}e_2,t^{-3}e_3,t^{-2}e_4,t^{-4}e_5)} $ &   $ \mathbb{S}_{33}$\\ \hline

$\mathbb{S}_{35} $ & $  \xrightarrow{ (t^{-1}e_1,t^{-1}e_2,t^{-2}e_3,t^{-2}e_4,t^{-3}e_5)} $ &   $ \mathbb{S}_{36}$ 
&

$\mathbb{S}_{30}^{t^{-1}} $ & $  \xrightarrow{ (e_1,e_2,e_3,te_4,e_5)} $ &   $ \mathbb{S}_{37}$\\ \hline

$\mathbb{S}_{37} $ & $  \xrightarrow{ (te_1,t^{-2}e_2,t^{-1}e_3,t^2e_4,e_5)} $ &   $ \mathbb{S}_{38}$  &

$\mathbb{S}_{39} $ & $  \xrightarrow{ (e_1,te_2,te_3,te_4,te_5)} $ &   $ \mathbb{S}_{40}$ 
\\ \hline
%%%%% S_{05}

$\mathbb{S}_{41}^{t^{-3}} $ & $  \xrightarrow{ (t^{-2}e_1,t^{-1}e_2,t^{-3}e_3,t^{-4}e_4,t^{-5}e_5)} $ &   $ \mathbb{S}_{43}$ &
$\mathbb{S}_{43} $ & $  \xrightarrow{ (t^{-2}e_1,t^{-1}e_2,t^{-3}e_3,t^{-4}e_4,t^{-5}e_5)} $ &   $ \mathbb{S}_{44}$
  \\ \hline

$\mathbb{S}_{41}^{t^4} $ & $  \xrightarrow{ (t^3e_1,te_2,t^4e_3,t^5e_4,t^6e_5)} $ &   $ \mathbb{S}_{45}$ &

$\mathbb{S}_{45} $ & $  \xrightarrow{ (t^{-3}e_1,t^{-1}e_2,t^{-4}e_3,t^{-5}e_4,t^{-6}e_5)} $ &   $ \mathbb{S}_{46}$ 
\\ \hline

$\mathbb{S}_{47}$&
$\xrightarrow{(te_1,  e_1+t^{-\frac{1}{2}}e_2,  t^{\frac{1}{2}}e_3,  e_4,  t^{-\frac{1}{2}}e_5)}$&$\mathbb{S}_{48}$ &

$\mathbb{S}_{48} $ & $  \xrightarrow{ (t^{-2}e_1,te_2,t^{-1}e_3,e_4,te_5)} $ &   $ \mathbb{S}_{49}$ 
 \\ \hline
\end{longtable} 

\end{proof}

\


\begin{thebibliography}{99}




%%%%%% ъ
\bibitem{racks}
Abchir H.,  Abid F.,  Boucetta M.,
A class of Lie racks associated to symmetric Leibniz algebras, 
Journal of Algebra and its Applications, 2021,
DOI: 10.1142/S0219498822502309


\bibitem{ack}
Abdelwahab H.,  Calder\'on A.J., Kaygorodov I.,
    The algebraic and geometric classification of nilpotent binary Lie algebras,
    International Journal of Algebra and Computation, 29 (2019), 6, 1113--1129.



\bibitem{akks21}
Adashev J.,   Kaygorodov I.,   Khudoyberdiyev A.,   Sattarov A., 
The algebraic and geometric classification of nilpotent right commutative algebras,
 Results in Mathematics,  76  (2021), 1,   24.


\bibitem{said2}
 Albuquerque H.,   Barreiro E.,  Benayadi S.,  Boucetta M., Sánchez J.M.,
    Poisson algebras and symmetric Leibniz bialgebra structures on oscillator Lie algebras,
    Journal of Geometry and Physics, 160 (2021), 103939.


 

\bibitem{ale3}
Alvarez M.A., 
Degenerations of $8$-dimensional $2$-step nilpotent Lie algebras,
    Algebras and Representation Theory, 24 (2021), 5, 1231--1243.
 
 
 

\bibitem{aleis}
Alvarez M.A., Hern\'{a}ndez I., 
    On degenerations of Lie superalgebras, 
    Linear and Multilinear Algebra,  68 (2020), 1, 29--44. 


\bibitem{aleis2}
Alvarez M.A., Hern\'{a}ndez I., 
    Varieties of Nilpotent Lie Superalgebras of dimension $\leq 5$,
    Forum Mathematicum, 32 (2020), 3,  641--661.



 


\bibitem{elisabete} 
Barreiro E.,   Benayadi S., 
A new approach to Leibniz bialgebras, 
Algebras and Representation Theory,  19 (2016),  1, 71--101.

\bibitem{bsaid}
 Benayadi S., 
 On representations of symmetric Leibniz algebras, 
Glasgow Mathematical Journal, 62 (2020),  S1, S99--S107.

\bibitem{saidqua}
Benayadi S., Hidri S., 
    Quadratic Leibniz algebras, 
    Journal of Lie Theory, 24 (2014), 3, 737--759.
 

\bibitem{saidsuper} 
Benayadi S.,  Mhamdi F.,  Omri S.,
    Quadratic (resp. symmetric) Leibniz superalgebras,
    Communications in Algebra, 49 (2021), 4, 
    1725--1750.


\bibitem{BC99} Burde D., Steinhoff C.,
    Classification of orbit closures of $4$--dimensional complex Lie algebras,
    Journal of Algebra, 214 (1999), 2, 729--739.

 

 

 

 

\bibitem{ckls}
Camacho L., Kaygorodov I., Lopatkin V., Salim M.,
    The variety of dual Mock-Lie algebras,
    Communications in Mathematics, 28 (2020), 2, 161--178.


 
 
\bibitem{CKLO13}
Casas J., Khudoyberdiyev A., Ladra M., Omirov B., On the degenerations of solvable Leibniz algebras, 
Linear Algebra and its Applications,  439 (2013),  2, 472--487.


   \bibitem{chouhy}
Chouhy S.,
    On geometric degenerations and Gerstenhaber formal deformations,
    Bulletin of the London Mathematical Society, 51 (2019),  5, 787--797.
    
    \bibitem{cibils}  Cibils C., 
    $2$-nilpotent and rigid finite-dimensional algebras,
    Journal of the London Mathematical Society (2), 36 (1987), 2, 211--218. 
\bibitem{degr3}
Cicalò S., De Graaf W.,   Schneider C.,
 Six-dimensional nilpotent Lie algebras,
 Linear Algebra and its Applications, 436 (2012), 1, 163--189.


\bibitem{usefi1}
Darijani I., Usefi H.,
 The classification of 5-dimensional $p$-nilpotent restricted Lie algebras over perfect fields, I.,
 Journal of Algebra, 464 (2016), 97--140.


\bibitem{degr2}
De Graaf W., 
 Classification of 6-dimensional nilpotent Lie algebras over fields of characteristic not $2$, 
 Journal of Algebra, 309  (2007), 2, 640--653.

\bibitem{degr1}
De Graaf W., 
 Classification of nilpotent associative algebras of small dimension,
  International Journal of Algebra and Computation, 28 (2018),  1, 133--161.

\bibitem{Feldvoss}
Feldvoss J., 
Leibniz algebras as non-associative algebras, 
Nonassociative mathematics and its applications, 115--149,
Contemp. Math., 721, Amer. Math. Soc., Providence, RI, 2019.

\bibitem{fkkv}
Fernández Ouaridi A.,  Kaygorodov I.,  Khrypchenko M.,  Volkov Yu., 
    Degenerations of nilpotent algebras,
    Journal of Pure and Applied Algebra,   226 (2022),  3, 106850.




       
  \bibitem{gabriel}
Gabriel P.,
Finite representation type is open,
Proceedings of the International Conference on Representations of Algebras (Carleton Univ., Ottawa, Ont., 1974), pp. 132--155.
 
 \bibitem{ger63}
Gerstenhaber M.,
    On the deformation of rings and algebras,
    Annals of Mathematics (2), 79 (1964), 59--103.
	 

\bibitem{GRH}
Grunewald F.,  O'Halloran J.,
    Varieties of nilpotent Lie algebras of dimension less than six,
    Journal of Algebra, 112 (1988), 315--325.

\bibitem{GRH2}
Grunewald F., O'Halloran J.,
    A Characterization of orbit closure and applications,
    Journal of Algebra, 116 (1988), 163--175.

 

\bibitem{GRH3}
Grunewald F., O'Halloran J., 
Deformations of Lie algebras, 
Journal of Algebra, 162 (1993), 1, 210--224. 



\bibitem{benedikt} 
 Hurle B.,
    A diagram connecting symmetric Leibniz algebras with Lie-admissible algebras, arXiv:1905.10147

 

\bibitem{ha16}
Hegazi A., Abdelwahab H.,
    Classification of five-dimensional nilpotent Jordan algebras,
    Linear Algebra and its Applications, 494 (2016), 165--218.



 
\bibitem{hac16}
Hegazi A., Abdelwahab H., Calderón Martín A.,
    The classification of $n$-dimensional non-Lie Malcev algebras with $(n-4)$-dimensional annihilator, 
    Linear Algebra and its Applications, 505 (2016), 32--56.


  
\bibitem{ht16}
Herrera-Granada J.F., Tirao P.,
The Grunewald--O'Halloran Conjecture for Nilpotent Lie Algebras of Rank $\geq1,$
Communications in Algebra, 44 (2016), 5,  2180--2192.

\bibitem{pirashvili}
Jibladze M., Pirashvili T., 
Lie theory for symmetric Leibniz algebras, 
Journal of Homotopy and Related Structures, 15 (2020),  1, 167--183.

 \bibitem{pirashvilinew}
Jibladze M., Pirashvili T., 
On certain class of Leibniz algebras, arXiv:1910.02716 
 

\bibitem{ikp20}
 Ignatyev M.,  Kaygorodov I., Popov Yu., 
  The geometric classification of $2$-step nilpotent algebras   and applications,  Revista Matemática Complutense, 2021, DOI:10.1007/s13163-021-00411-0 



 

\bibitem{ikv17}
Ismailov N., Kaygorodov I.,  Volkov Yu.,
    The geometric classification of Leibniz algebras,
    International Journal of Mathematics, 29  (2018), 5, 1850035.



 
 

 
 
\bibitem{kkl20}
Kaygorodov I., Khrypchenko M., Lopes S.,
    The algebraic and geometric classification of nilpotent anticommutative algebras,
    Journal of Pure and Applied Algebra, 224 (2020), 8, 106337.


\bibitem{kkp20}
Kaygorodov I., Khrypchenko M., Popov Yu., 
The algebraic and geometric classification of nilpotent terminal algebras, Journal of Pure and Applied Algebra,  225 (2021), 6, 106625.



\bibitem{klp20} Kaygorodov I., Lopes S., P\'{a}ez-Guill\'{a}n P.,   
Non-associative central extensions of null-filiform associative algebras, 
Journal of Algebra,   560  (2020),   1190--1210.


 
 


\bibitem{kppv}
Kaygorodov I.,  Popov Yu., Pozhidaev A., Volkov Yu.,
    Degenerations of Zinbiel and nilpotent Leibniz algebras,
    Linear and Multilinear Algebra,   66 (2018), 4, 704--716. [Corrigendum to  "Degenerations of  Zinbiel and nilpotent Leibniz  algebras", 
Linear and Multilinear Algebra,  70  (2022), 5,  993--995.]


 

\bibitem{kv16}
Kaygorodov I.,   Volkov Yu.,
    The variety of $2$-dimensional algebras over an algebraically closed field,
    Canadian  Journal of Mathematics,  71 (2019),  4, 819--842.


 

 



\bibitem{mason}
Mason G., Yamskulna G., 
    Leibniz Algebras and Lie Algebras,
	SIGMA, 9 (2013), 063, 10 pages.


\bibitem{remm1}
Remm E., Weakly associative algebras, Poisson algebras and
deformation quantization,  
 Communications in Algebra, 49 (2021), 9, 3881--3904.  
 
\bibitem{remm2}
Remm E., Weakly associative and symmetric Leibniz algebras, 
arXiv:2008.00560
 
 
 

\bibitem{S90}
Seeley C., 
    Degenerations of 6-dimensional nilpotent Lie algebras over $\mathbb{C}$, 
    Communications in Algebra, 18 (1990), 3493--3505.


\bibitem{shaf}
    Shafarevich I., 
    Deformations of commutative algebras of class $2,$ Leningrad Mathematical Journal, 2 (1991), 6, 1335--1351.

\bibitem{ss78}
Skjelbred T., Sund T.,
    Sur la classification des algebres de Lie nilpotentes,
    C. R. Acad. Sci. Paris Ser. A-B, 286 (1978), 5,  A241--A242.

 
\bibitem{tirao}
Tirao P., Vera S., 
    There are no rigid filiform Lie algebras of low dimension, 
     Journal of Lie Theory, 29 (2019),  2, 391--412.


\bibitem{wolf1}
Volkov Yu., 
    Anticommutative Engel algebras of the first five levels, 
    Linear and Multilinear Algebra, 70 (2022), 1,  148--175.

\bibitem{wolf2}
Volkov Yu., 
    $n$-ary algebras of the first level,
     Mediterranean Journal of Mathematics, 19 (2022), 1, Paper: 2.
 
 


 


\bibitem{V70}
Vergne M., 
Cohomologie des alg\`{e}bres de Lie nilpotentes, 
Bulletin de la Société Mathématique de France, 98 (1970), 81--116.

\end{thebibliography}
\end{document}